\newtheorem{theorem}{Theorem}[section]
\newtheorem{lemma}[subsection]{Lemma}
\newtheorem{proposition}[subsection]{Proposition}
\newtheorem{corollary}[subsection]{Corollary}
\theoremstyle{definition}
\theoremstyle{remark}
\numberwithin{equation}{section}
\newcommand{\R}{\mathbb{R}}
\newcommand{\eps}{{\varepsilon}}
\newcommand{\Lip}{{\mathrm{Lip}}}
\newcommand{\dist}{{\textup {dist}}}
\newcommand{\de}{\partial}
\renewcommand{\and}{\quad \text{and} \quad}
\renewcommand{\div}{\textup{div}}
\title[Nonlinear thin and boundary obstacle problems]{Regularity of solutions to
nonlinear thin and boundary obstacle problems}
\author[L.~Di Fazio]{Luca Di Fazio}
\address{La Sapienza Universit\`a di Roma}
\curraddr{Piazzale Aldo Moro 5, 00185 Roma (Italy)}
\email{difazio@mat.uniroma1.it}
\email{spadaro@mat.uniroma1.it}
\author[E.~Spadaro]{Emanuele Spadaro}
\thanks{
The authors have been supported by the ERC-STG Grant n. 759229
HiCoS ``Higher Co-dimension Singularities: Minimal Surfaces and 
the Thin Obstacle Problem''.}
\subjclass[2010]{35R35, 49Q05}
\keywords{Thin obstacle problems, Boundary obstacle problems, free boundary, Signorini's problem, elliptic regularity}
\date{\today}
\newcommand{\tr}{\text{\rm tr}}
\newcommand{\osc}{\mathrm{osc}}
\newcommand{\supp}{\operatornamewithlimits{supp}}
\renewcommand{\div}{\mathrm{div}}
\begin{document}

\begin{abstract}
Variational inequalities with thin obstacles and Signorini-type boundary conditions are classical problems in the calculus of variations, arising in numerous applications. In the linear case many refined results are known, while in the nonlinear setting our understanding is still at a preliminary stage.

In this paper we prove $C^1$ regularity for the solutions to a general class of quasi-linear variational inequalities with thin obstacles and $C^{1, \alpha}$ regularity for variational inequalities under Signorini-type conditions on the boundary of a domain.
\end{abstract}

\maketitle

\section{Introduction}\label{Introduction}
In this paper we prove the one-sided continuity of the gradient of the 
solutions to quasi-linear variational inequalities with thin obstacles
\begin{equation}\label{e.var-ineq}
	\int_{\Omega} \langle F(x,u,\nabla u),\nabla v - \nabla u \rangle
	+ F_0(x,u,\nabla u) (v -u) \ge 0 \quad \forall \, v \in \mathcal{K},
\end{equation}
where the solution $u$ is itself a member of $\mathcal{K}$, that is one of the following two sets:
\begin{itemize}
	\item
		{\bf Interior thin obstacles}
		\begin{equation}\label{K - ostacolo sottile}
			\mathcal{K} := \left\{v\in W^{1,\infty}(\Omega) \;:\; v\vert_{\de\Omega}= g,  v\vert_{\Sigma} \ge \psi\right\},
		\end{equation}
		where $\Sigma\subset\Omega$ is a smooth hypersurface dividing $\Omega$ into two connected components, $\Omega \setminus \Sigma=\Omega^+ \cup \Omega^-$, $g\in W^{1,\infty}(\de \Omega)$ a given boundary value and $\psi:\Sigma\to \R$;
	\item
		{\bf Boundary obstacles}
		\begin{equation}\label{K - ostacolo al bordo}
			\mathcal{K} := \left\{v\in W^{1,\infty}(\Omega) \;:\; v\vert_{\de\Omega} \ge \psi\right\},
		\end{equation}
		with the unilateral constraint given on the boundary of $\Omega$ by a function $\psi:\de\Omega \to \R$.
\end{itemize}
Here $F = (F_1, \ldots, F_{n+1}): \Omega\times \R\times \R^{n+1}\to \R^{n+1}$ and $F_0:\Omega\times \R\times \R^{n+1}\to \R$, 
$\Omega\subset \R^{n+1}$ a bounded open set with smooth boundary.

\medskip

The boundary variational inequalities are also known as Signorini's problem in the theory of elasticity (see, e.g., \cite{Fichera64} for more details on the physical background).
A natural case of a nonlinear variational inequality is that of minimal surfaces forced to lie above an obstacle which is prescribed on the boundary, as introduced by Nitsche \cite{Nitsche69} in a particular instance and previously by H. Lewy \cite{Lewy68}, who was able to analyzed the linearized problem with the Laplace operator.
More in general, variational inequalities of this kind might arise from minimization problems
\begin{equation}\label{e.minimization}
	\textup{minimize }\quad \int_{\Omega} h(x, u , \nabla u)\; dx
	\qquad u \in \mathcal{K},
\end{equation}
which lead to the variational inequality \eqref{e.var-ineq} with $F = \nabla_p h$ and $F_0 = \de_z h$, where we denote by 
$(x, z, p)\in \Omega\times \R\times \R^{n+1}$ the variables of $h$.

\medskip

This problem has been widely considered in the literature by numerous
authors: here we recall few of the earlier contributions which are more relevant for the present paper by Fichera \cite{Fichera64}, Lewy \cite{Lewy68, Lewy70}, Nitsche \cite{Nitsche69}, 
Giusti \cite{Giusti71, Giusti73,GiustiCIME}, Frehse \cite{Frehse75,Frehse77},
Kinderlehrer \cite{Kinderlehrer71, Kinderlehrer81}, 
Richardson \cite{Richardson78}, Caffarelli \cite{Caffarelli79},
Ural'tseva \cite{Uraltseva85,Uraltseva89}, only to mention a few (an increasing number of articles on variational inequalities with thin obstacles appeared in the recent years).
Under general conditions on the functions $F$, $F_0$ and on the domain $\Omega$, the existence of Lipschitz solutions has been established
(see, e.g., the works by Nitsche and Giusti \cite{Nitsche69,Giusti73, GiustiCIME} for the case of minimal surfaces and Giaquinta-Modica \cite{GiaMo75} for more general nonlinearities).

As far as further regularity of the solutions is investigated, in accordance with the linear case the one-sided continuity of the derivatives of the solutions up to the thin obstacle is expected.
Nevertheless, this problem has remained open in this generality since the early works, though several significant results have appeared in the last years.
The main breakthroughs have been obtained for the linear case of the Laplace operator. It was well known that in in this instance the solutions could not be more regular than having $\frac12$-H\"{o}lder derivatives on both sides of the thin obstacle, and the optimal one-sided $C^{1,1/2}$ regularity was first established in dimension $n=1$ by Richardson \cite{Richardson78} and more recently by Athanasopoulos-Caffarelli \cite{AtCa04} extended to general dimensions (recall also the $C^{1,\alpha}$ regularity previously obtained by Caffarelli \cite{Caffarelli79}).
Starting from these pioneering works, the H\"older one-sided continuity of the derivatives of solutions has been also proven to hold for some classes of quasi-linear operators, in the two-dimensional case by Kinderlehrer \cite{Kinderlehrer81} and in general dimension by Ural'tseva \cite{Uraltseva85,Uraltseva89}.

However, for the general operators in \eqref{e.var-ineq} the one-sided continuity of the gradients is an open problem and the best available results in this regards have been obtained by Frehse in a pair of pioneering papers \cite{Frehse75,Frehse77} which establish the continuity (with a logarithmic modulus of continuity) of the gradient of the solutions in dimension $n=1$, and the continuity of the tangential derivatives to the thin obstacles in general dimension $n\ge 2$.
As far as we known, those by Frehse are still the most general results, while more refined theorems are known for some specific operators, such as the minimal surface operator (see, e.g., \cite{Athanasopoulos83,FeSe17, FocardiSpadaro20}). 


\medskip

In this article we establish the $C^1$ and $C^{1,\alpha}$ regularity results for a general class of nonlinear variational inequalities \eqref{e.var-ineq}.
The main assumption we consider (apart from the regularity of the fields $F, F_0$) is the natural ellipticity condition:
\begin{itemize}
\item[(H)] the matrix $\left(\de_k F_i\right)_{ik}$ is uniformly positive definite in compact subsets.
\end{itemize}
This hypothesis is necessary to the existence of solutions,
e.g., for variational inequalities arising from minimization problems this
is nothing else than the convexity of the integrands in the last variable.

Building upon the pioneering works by Frehse \cite{Frehse75,Frehse77} and on Ural'tseva's approach based on De Giorgi's method \cite{Uraltseva85,Uraltseva89},
in this paper we show the following result.

\begin{theorem}\label{t.1}
Let $\Omega\subset\R^{n+1}$ be a bounded open set with $C^2$ boundary, $F:\Omega\times \R\times \R^{n+1}\to \R^{n+1}$ and $F_0:\Omega\times \R\times \R^{n+1}\to \R$ functions of class $C^1$ and the obstacle function $\psi$ in $\mathcal{K}$ is of class $C^2$.
Assume that the ellipticity condition (H) holds.
Then, 
\begin{itemize}
\item[(i)] every Lipschitz solution $u:\Omega\to \R$ to the thin obstacle problem \eqref{e.var-ineq}  with $\mathcal{K}$ given by \eqref{K - ostacolo sottile} 
has one-sided continuous derivatives up to the thin obstacle $\Sigma$: i.e.,
$u\in C^1(\Omega^+\cup \Sigma)\cap C^1(\Omega^-\cup \Sigma)$.

\item[(ii)] every Lipschitz solution $u:\Omega\to \R$ to the boundary variational inequality \eqref{e.var-ineq} with $\mathcal{K}$ given by \eqref{K - ostacolo al bordo} is $C^{1,\alpha}(\overline{\Omega})$ for some $\alpha\in (0,1)$.
\end{itemize}
\end{theorem}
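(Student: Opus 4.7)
The plan is to follow the Frehse--Ural'tseva approach: first establish H\"older continuity of the derivatives tangential to the obstacle, and then use the quasi-linear equation satisfied by $u$ on each side of $\Sigma$ to upgrade to one-sided continuity of the full gradient. After a $C^2$ change of coordinates centred at a point $x_0\in\Sigma$ (resp.\ $x_0\in\de\Omega$ in case (ii)), one may assume $\Sigma=\{x_{n+1}=0\}$ and $\Omega^{\pm}=\{\pm x_{n+1}>0\}$; the transformed inequality is of the same form, with $C^1$ fields still satisfying (H) and obstacle still of class $C^2$.

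The first and most technical step is to prove that, for every tangential direction $e_i$ with $i=1,\dots,n$, the derivative $\de_i u$ is H\"older continuous up to $\Sigma$ from each side. I would begin by establishing $\de_i u\in H^1_{\loc}$ via a tangential difference-quotient argument, using admissible perturbations of the form $v=u\pm h\eta$ where $\eta$ is corrected near the coincidence set $\Lambda=\{u=\psi\}\subset\Sigma$ by a truncated copy of $\de_i\psi$ so as to remain in $\mathcal{K}$; the ellipticity (H) then yields a Caccioppoli inequality for the tangential difference quotient $u_h^{(i)}(x):=h^{-1}(u(x+he_i)-u(x))$. Next I would apply Ural'tseva's De Giorgi iteration to the super- and sub-level sets $\{\de_i u>k\}$: the unilateral constraint couples the two phases $\Omega^{\pm}$ only through the trace identity $\de_i u\vert_\Sigma=\de_i\psi$ on $\Lambda$, while (H) provides an oscillation-decay estimate uniform across the (a priori unknown) free boundary $\de\Lambda$. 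This produces $\de_i u\in C^{0,\alpha}(\overline{\Omega^{\pm}\cap U})$ for some $\alpha\in(0,1)$ and a neighbourhood $U$ of $x_0$.

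Given the tangential H\"older estimate, the normal derivative $\de_{n+1}u$ is then recovered as follows. On each $\Omega^{\pm}$ the function $u$ solves the quasi-linear PDE $-\div F(x,u,\nabla u)+F_0(x,u,\nabla u)=0$ classically; expanding the divergence yields a uniformly elliptic non-divergence form equation whose top coefficient $a_{n+1,n+1}=\de_{p_{n+1}}F_{n+1}$ is bounded below by (H). One may thus algebraically solve for $\de^2_{n+1}u$ in terms of mixed and purely tangential second derivatives of $u$ together with the H\"older-continuous inputs $x,u,\nabla u$. Integrating this ODE-in-the-normal-variable for $\de_{n+1}u$ along normal segments approaching $\Sigma$, in the spirit of Frehse's one-dimensional argument, yields a modulus of continuity (in general only logarithmic, hence not necessarily H\"older) for $\de_{n+1}u$ up to $\Sigma$ from each side. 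Combined with the tangential H\"older regularity this gives $u\in C^1(\Omega^{+}\cup\Sigma)\cap C^1(\Omega^{-}\cup\Sigma)$, proving (i). For (ii), the same programme is carried out with $\de\Omega$ in place of $\Sigma$, but there is no second phase to match: since $-\div F+F_0=0$ holds with equality on all of $\Omega$, boundary Schauder estimates for the linearized equation with H\"older-continuous right-hand side upgrade the normal derivative to H\"older continuity as well, yielding $u\in C^{1,\alpha}(\overline{\Omega})$.

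The principal difficulty is Step~2: obtaining H\"older continuity of the tangential derivatives for the fully quasi-linear variational inequality with no structural hypothesis beyond (H) and $C^1$ data. The delicate points are the construction of admissible test functions respecting the unilateral constraint on $\Sigma$ while still allowing a Caccioppoli estimate for tangential difference quotients, and then running a De Giorgi iteration with constants uniform across the free boundary on $\Sigma$; once this is achieved, the passage to full one-sided gradient continuity in Step~3 and to $C^{1,\alpha}$ regularity in case (ii) follows by controlled deductions from the PDE.
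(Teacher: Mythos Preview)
Your proposal has a genuine gap in Step~3, and it is exactly the obstacle the paper is written to overcome. Solving the PDE for $\de^2_{n+1}u$ expresses it in terms of \emph{mixed} second derivatives $\de_i\de_{n+1}u$ (together with purely tangential second derivatives and lower-order terms). Even granting H\"older continuity of the tangential derivatives $\de_i u$ up to $\Sigma$, you have no modulus of continuity for $\de_i\de_{n+1}u$; these are only in $L^2$. Integrating along a normal segment therefore gives no control on $\de_{n+1}u$ at the boundary. Frehse's argument is genuinely one-dimensional precisely because in $n=1$ there are no mixed derivatives; in higher dimensions his method yields only tangential continuity, and the paper's introduction states explicitly that one-sided $C^1$ for the quasi-linear thin obstacle problem was open for this reason.

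The paper's route for (i) is quite different and supplies the missing idea. Starting only from Frehse's logarithmic continuity of $\nabla' u$ and the $H^2$ bound, one performs a blowup: rescalings $u_k(x)=t_k^{-1}u(z_k+t_kx)$ converge (after the $H^2$ compactness) to one-dimensional profiles $w(x_{n+1})$ which, by ellipticity, are piecewise linear wedges with slopes $a^+\le a^-$. The crucial new step is a barrier comparison with paraboloids $\eta(x)=t(|x'-x_0'|^2-Kx_{n+1}^2)+mx_{n+1}+\gamma$ (following Fern\'andez-Real--Serra): for $t$-rescaled operators these are strict super/subsolutions, and sliding them shows that if $a^+<a^-$ then $B'_{1/2}\subset\Lambda(u_k)$ for large $k$, contradicting $0\in\Gamma(u_k)$. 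Hence all blowups at free boundary points are linear. A second barrier argument then pins the slope from below uniformly in a fixed half-ball, which forces uniqueness of the blowup and hence differentiability at every $z_0\in\Gamma(u)$. Continuity of $\de_{n+1}u$ across $\Gamma(u)$ follows by combining this with interior/boundary $C^{1,\alpha}$ estimates away from $\Gamma(u)$.

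For (ii) your final step is also too quick: the Signorini condition is neither Dirichlet nor Neumann, so boundary Schauder does not apply directly. The paper first uses the $C^1$ result as input (in particular $F_{n+1}(z_0,0,0,\de_{n+1}u(z_0))=0$ on $\Gamma(u)$), then proves Caccioppoli inequalities both for $\pm\de_i u$ and for $w=-F_{n+1}(x,0,0,\de_{n+1}u)$; the latter requires a $C^1$ change of variables to force $\de_{p_{n+1}}F'(x,u,\nabla u)=0$ on $\Lambda(u)$, which kills the boundary term. A De Giorgi dichotomy at each scale (either $\Lambda(u)$ or $\{\Phi u=0\}$ has density $\ge 1/2$ in $B'_{r/2}$) then yields oscillation decay for $\Phi u=F_{n+1}(\cdot,0,0,\de_{n+1}u)$, hence its H\"older continuity, and finally Lieberman's oblique-derivative regularity gives $u\in C^{1,\alpha}$.
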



This is the first result on the continuity of the derivatives of the solutions to the thin obstacle problems for fairly general nonlinear variational inequalities.
The case of linear operators $F_i(x, z, p) = \sum_{j=1}^{n+1}a_{ij}(x) p_j$
has been considered in \cite{Caffarelli79, Kinderlehrer81, Uraltseva85}
with weaker assumptions on the coefficients $a_{ij}$ from time to time (e.g., $a_{ij}\in W^{1,q}$ with $q>n+1$ are allowed in the work of Uralt'seva \cite{Uraltseva85} ).
In \cite{Uraltseva89} Uralt'seva considered also the case of quasilinear operators
$F_i(x, z, p) = \sum_{j=1}^{n+1}a_{ij}(x,z) p_j$ and proves $C^{1,\alpha}$ regularity of the solutions to Signorini's problem up to the boundary.

\medskip

In this paper we combine and extend the ideas developed for the minimal surface operator by Fern\'andez-Real and Serra \cite{FeSe17} in the context of parametric  solutions to thin obstacle problem according to De Giorgi's theory of Caccioppoli sets, and by Focardi and the second author \cite{FocardiSpadaro20} in the nonparametric setting.

The starting point is Frehse's general partial regularity result \cite{Frehse77}, which we use to perform a blowup analysis inspired by \cite{FocardiSpadaro20} in order to prove the $C^1$ regularity of the solutions to the general variational inequality.
We stress that in \cite{FocardiSpadaro20}, as well as in the works by Uralt'seva \cite{Uraltseva85, Uraltseva89} only the boundary obstacle problem is considered, where an additional constraint acts on the non-coincidence set of the solutions (i.e., the natural homogeneous Neumann condition on the co-normal derivative).
The extension of this analysis to the general case needs the introduction of new ideas, which in particular employs a comparison principle with paraboloids introduced in \cite{FeSe17}.
With these ingredients, we prove that blowups to the variational inequalities are flat, one-dimensional and unique, thus leading to the $C^1$ regularity around points of the free boundary.

Building upon it, we extend then the approach via De Giorgi's classes introduced by Uralt'seva \cite{Uraltseva85, Uraltseva89} in order to deduce the $C^{1, \alpha}$ regularity for the solutions to the boundary variational inequality.

\medskip

As far as we are aware, not much is known on the optimal regularity of the solutions and on the structure of the free boundary in the quasi-linear case, especially if compared to the linear case (see, e.g., \cite{AtCa04, CSV20, FRRO21, FocardiSpadaro16, FocardiSpadaro18, FocardiSpadaro18corrections, GarofaloPetrosyan09,KPS15, SY21}).
The only available results are those proven for minimal surfaces with thin analytic obstacles in dimension $n=1$ by Athanasopoulos \cite{Athanasopoulos83} and in general dimension for flat obstacle by Focardi and the second author \cite{FocardiSpadaro20}.

\section{Preliminaries}\label{s.rettificare}

\subsection{Reduction to flat boundaries and zero obstacles}
We use the following notation $x=(x',x_{n+1}) \in \R^n\times \R$ and 
for every $r>0$ we set
\begin{gather*}
	B_r = \{x \in \R^{n+1} \;:\; |x| < r\},
	\\ B_r^+ = B_r\cap \{x_{n+1}>0\},
	\quad
	B_r^- = B_r\cap \{x_{n+1}<0\},
	\quad
	B_r' = B_r\cap \{x_{n+1}=0\}.
\end{gather*}
In the following $\Sigma$ denotes the hypersurface where the thin obstacle is prescribed: i.e.,
\begin{itemize}
\item for the thin obstacle problem $\Sigma$ is a hypersurface splitting the domain 
$\Omega$ into two parts, $\Omega= \Omega^+ \cup \Omega^-$, with $\partial \Omega^+\cap \partial \Omega^-=\Sigma$;

\item for the boundary value problem  $\Sigma = \de \Omega$. In order to unify the following discussion, in this case we set 
$\Omega^+ = \Omega$.
\end{itemize}

Given a point $x_0 \in \Sigma$,
without loss of generality we can assume that locally around $x_0$ the hypersurface $\Sigma$ is given by the graph of a function
$\phi:\R^{n}\to \R$, i.e., there exists $R>0$ such that
\begin{gather*}
	\Omega^+ \cap B_R(x_0) = 
	\left\{ (x', x_{n+1})\in \R^n\times \R : x_{n+1}> \phi(x') \right\} \cap B_R(x_0).
\end{gather*}
In particular, the map $\Phi:\R^{n+1} \to \R^{n+1}$ defined by
\begin{gather*}
	\Phi(x',x_{n+1}) = x_0+\left(x', x_{n+1}+\phi(x')\right)
\end{gather*}
is a local diffeomorphism between a neighborhood of the origin, say $B_r$,
and a neighborhood of $x_0$, $U_0= \Phi(B_r)$, such that
\begin{gather*}
	\Phi(B_r^+)= \Omega^+ \cap U_0.
\end{gather*}
Since all the estimates we give are local, we always choose the coordinates
according to the diffeomorphism $\Phi$: given a solution $u$ to the variational inequality \eqref{e.var-ineq}, if we set $\bar u(x) = u(\Phi(x))$,
$\bar v(x) = v(\Phi(x))$, then
\begin{align}\label{e.var-ineq-3}
0 \leq	\int_{\Omega\cap U_0} &\langle F(y,u,\nabla u),\nabla v - \nabla u \rangle
	+ F_0(y,u,\nabla u) (v -u)\;dy\notag\\
	&=\int_{B_r^+}\langle \bar F(x,\bar u,\nabla \bar u),\nabla \bar v - \nabla \bar u \rangle + \bar F_0(x,\bar u,\nabla \bar u) (\bar v - \bar u)\;dx
\qquad \forall\; \bar v \in \mathcal{\bar K},
\end{align}
with 
\begin{gather*}
	\mathcal{\bar K} := \left\{w\in W^{1,\infty}(B_r^+) \;:\; w\vert_{B_r'} \ge \bar \psi,\;w\vert_{\partial B_r^+\setminus B_r'} = 	\bar u\vert_{\partial B_r^+\setminus B_r'} \right\},\\
\bar \psi(x') = \psi(\Phi(x',0)),\\
	\bar F(x,z,p) = 
	A(x)^{-1}F(\Phi(x),z,(A(x)^{-1})^Tp),\\
	\bar F_0 (x,z,p) = F_0 (\Phi(x),z,(A(x)^{-1})^Tp) ,\\
	A(x) = D\Phi(x).
\end{gather*}
Note that the ellipticity condition (H) for the associate operator
\begin{gather*}
	\bar H \bar u = - \div \left(\bar F(x,\bar u,\nabla \bar u)\right) + \bar F_0(x,\bar u,\nabla \bar u)
\end{gather*}
still holds true.

In a similar way, we can also subtract the obstacle from the solution $\bar u$:
setting $\tilde u(x)= \bar u (x) - \bar \psi (x')$, we get
\begin{align}
\int_{B_r^+}\langle \tilde F(x,\tilde u,\nabla \tilde u),\nabla \tilde v - \nabla \tilde u \rangle + \tilde F_0(x,\tilde u,\nabla \tilde u) (\tilde v - \tilde u)\;dx\geq 0,
\end{align}
for every $\tilde v \in \mathcal{\tilde K} = \left\{w\in W^{1,\infty}(B_r^+) : w\vert_{B_r'} \ge 0
\;w\vert_{\partial B_r^+\setminus B_r'} = 	\tilde u\vert_{\partial B_r^+\setminus B_r'}
\right\}$, with 
\begin{gather*}
\tilde F(x,z,p)= \bar F(x,z + \bar \psi(x'),p + \nabla \bar \psi(x')),\\
\tilde F_0(x,z,p)= \bar F_0(x,z + \bar \psi(x'),p + \nabla \bar \psi(x')),
\end{gather*}
still preserving the ellipticity condition (H).

\subsection{Hypotheses on $F$ and $F_0$}
In view of the discussion above, we can therefore assume what follows for the variational inequality \eqref{e.var-ineq}:
\begin{enumerate}[label=(H\arabic*), start=0]
	\item \label{Hyp. 0}
		$F \in C^{1}(U \times\R\times \R^{n+1} , \R^{n+1})$, $F_0\in C^{1}(U \times \R \times \R^{n+1} , \R)$,
where $U= B_1$ in the thin obstacle problem and $U=B_1^+$ for the boundary variational inequality;		
	\item \label{ellip}
		for every $M>0$ there exists $\lambda = \lambda(M) > 0$ such that
	\begin{gather*}
		\langle D_p F(x,z,p) \xi,\xi \rangle \ge  \lambda |\xi|^2
		\\ \forall \; (x,z,p) \in B_1 \times \R \times \R^{n+1},
		\quad |z|, \, |p| \le M,
		\quad \forall \; \xi \in \R^{n+1}.
	\end{gather*}
\end{enumerate}

The constants appearing in all the estimates of the subsequent sections might depend on the dimension $n$, the Lipschitz constant of the solutions $u$, the modulus of continuity of $F$ and $F_0$ and their first derivatives, and on the local ellipticity constant $\lambda$.

\subsection{Frehse's results}
We recall the results proven by Frehse in \cite{Frehse77} which are relevant for our analysis.

\begin{theorem}[\cite{Frehse77}] \label{Frehse}
	Under assumptions {\rm \ref{Hyp. 0}} and {\rm \ref{ellip}}, every Lipschitz solution $u$ of the variational inequality \eqref{e.var-ineq} for either the thin obstacle problem or the Signorini problem has continuous tangential derivatives: $\de_i u \in C(B_1^+ \cup B_1')$ for $i = 1,\dots,n$ with a local modulus of continuity $\omega$, 
	\begin{equation}\label{e.modulusFrehse}
		|\de_i u(x) - \de_i u(y)| \le \omega(|x-y|) \qquad 	\forall \; x,y \in B_r^+ \cup B_r',
	\end{equation}
	where $\omega(t) = C|\log t|^{-q}$ with $C=C(r, \Lip \, u) > 0$ and $q=q(n, r, \Lip \, u)>0$, $r \in (0,1)$.
Moreover, if $n=1$ the normal derivative is continuous too, thus implying that
$\nabla u= (\de_1 u ,\de_{2} u) \in C(B_1^+ \cup B_1',\R^2)$ with the same local  modulus of continuity \eqref{e.modulusFrehse} for a suitable choice of the constants $C,q$.
\end{theorem}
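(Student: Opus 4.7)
The plan is to work in the reduced setting of Section~\ref{s.rettificare} (flat $\Sigma=B_1'$, zero obstacle) and exploit the invariance of $\tilde{\mathcal{K}}$ under tangential translations $x\mapsto x+he_i$ for $i\le n$---an invariance that fails for normal translations, which is ultimately why only tangential regularity can be obtained in general dimension. Fix $i\le n$, set $u^h(x):=u(x+he_i)$, and pick a nonnegative cutoff $\eta\in C_c^\infty(B_r)$ and $t\in(0,1)$: the convex combinations
\[
v:=(1-t\eta^2)u+t\eta^2 u^h,\qquad v^h:=t\eta^2 u+(1-t\eta^2)u^h,
\]
are nonnegative on $B_r'$ (compatible with the zero obstacle), and agree with $u$, resp.\ $u^h$, on $\partial B_r^+\setminus B_r'$ where $\eta=0$. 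Plugging $v$ into the variational inequality at $u$ and $v^h$ into that at $u^h$, summing, dividing by $h^2$, and absorbing the leading symmetric form via ellipticity~(H) yields a Caccioppoli bound for $\Delta^h_i u:=(u^h-u)/h$:
\[
\int_{B_r^+}\eta^2\,|\nabla\Delta^h_i u|^{2}\,dx \,\le\, C\int_{B_r^+}|\nabla\eta|^{2}\,|\Delta^h_i u|^{2}\,dx + C,
\]
uniformly as $h\to 0$, and the same bound passes to the limit $\partial_i u$.

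The key step is then upgrading this Caccioppoli inequality to a quantitative modulus of continuity for $\partial_i u$. Because $\|\Delta^h_i u\|_\infty \le \Lip u$ and no improvement on this sup bound is available, direct iteration only yields the trivial $L^2$ bound on $\nabla\partial_i u$. I would instead apply Widman's hole-filling trick on annuli $B_{2\rho}\setminus B_\rho$ to $\phi(\rho):=\int_{B_\rho^+}|\nabla\partial_i u|^2$, producing an inequality of the form $\phi(\rho)\le\theta\,\phi(2\rho)+C$ with $\theta<1$. Iterating on a geometric sequence of radii yields
\[
\int_{B_\rho^+}|\nabla\partial_i u|^{2}\,dx \,\le\, \frac{C}{|\log\rho|^{2q}}, \qquad q=q(n,\Lip u)>0,
\]
which via the Campanato embedding delivers the logarithmic modulus of continuity \eqref{e.modulusFrehse}.

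When $n=1$ we still need to treat $\partial_2 u$. On the non-coincidence portion $\{u>0\}\cap\Sigma$ the natural Signorini-type condition $F_{n+1}(x,u,\nabla u)=0$ holds in the weak sense, and continuity of $\partial_1 u$ from the previous step together with assumption~(H) lets one solve this equation uniquely for $\partial_2 u$ as a continuous function of $(x,\partial_1 u)$. On the coincidence set $\{u=0\}\cap\Sigma$, $\partial_1 u=0$ too, and a one-sided Hopf-type reading of the PDE in $B_1^+$---crucially exploiting that in dimension $1+1$ the free boundary is a discrete subset of $\Sigma$---provides matching one-sided limits of $\partial_2 u$ at the interface between coincidence and non-coincidence points.

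The hardest part is unquestionably the hole-filling step: only \emph{tangential} perturbations of $u$ are admissible, so the Caccioppoli estimate is one-sided and cannot be sharpened to a polynomial decay for general $F,F_0$. The logarithmic rate in \eqref{e.modulusFrehse} is inherent to this method, and obtaining a H\"older modulus---let alone full gradient continuity up to $\Sigma$---requires substantially more structure, precisely what the blow-up analysis developed in the rest of the paper achieves.
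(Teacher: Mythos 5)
The paper does not prove this statement: Theorem~\ref{Frehse} is quoted verbatim from Frehse's 1977 article \cite{Frehse77} and used as a black box. Your reconstruction captures Frehse's starting point (tangential difference quotients are admissible perturbations because $\mathcal K$ is invariant under $x\mapsto x+he_i$, $i\le n$, and the resulting Caccioppoli inequality for $\Delta_i^h u$), but the passage from the Caccioppoli inequality to the logarithmic modulus has two genuine gaps.

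First, the iteration as written cannot produce decay. An inequality of the form $\phi(\rho)\le\theta\,\phi(2\rho)+C$ with $\theta<1$ and a \emph{non-decaying} constant $C$ iterates only to boundedness, $\phi(\rho)\le C/(1-\theta)$, never to $\phi(\rho)\to 0$. To run Widman's hole-filling one needs to be able to subtract the mean $k$ of $\Delta_i^h u$ on the annulus before applying Poincar\'e, i.e.\ to use the test perturbation $t\eta^2(u^h-u-kh)$; but for this to stay in $\mathcal A_g$ one needs $u+t\eta^2(u^h-u-kh)\ge 0$ on $B_r'$ and simultaneously $u^h-t\eta^2(u^h-u-kh)\ge 0$ on $B_r'$, and on the coincidence set (where $u=u^h=0$) these two requirements force $kh\le 0$ and $kh\ge 0$ at once, i.e.\ $k=0$. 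So the Poincar\'e/hole-filling step is blocked by the obstacle, and your sketch does not explain how Frehse circumvents this --- which is exactly where the logarithmic (rather than H\"older) rate is extracted. Second, even granting a decay $\phi(\rho)\le C|\log\rho|^{-2q}$, this is dimensionally insufficient for $n\ge 2$: Morrey--Campanato embedding requires $\int_{B_\rho^+}|\nabla\partial_i u|^2\le C\,\rho^{\,n-1}|\log\rho|^{-2q}$, with the scaling factor $\rho^{n-1}$; your claimed intermediate estimate lacks it, so the conclusion $\partial_i u\in C^0$ with modulus $|\log t|^{-q}$ does not follow from it except when $n=1$. Finally, for $n=1$ the normal derivative, you invoke discreteness of the free boundary in dimension $1+1$; this is a nontrivial structural fact not established at this stage of the argument and is not needed in Frehse's treatment (which is from the 1975 two-dimensional paper \cite{Frehse75} and proceeds by energy methods, not by enumerating free boundary points). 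The concluding intuition --- that the obstacle constraint is precisely what degrades the rate from H\"older to logarithmic --- is the right one, but the quantitative step it rests on is not there.
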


We also need the $H^2$ regularity of solutions proven by Frehse.

\begin{lemma}[{\cite[Lemma 2.2]{Frehse77}}]\label{stima H^2}
	Let $u$ be a solution to the variational inequality \eqref{e.var-ineq} for either the thin obstacle problem or the boundary variational inequality, under the assumptions {\rm \ref{Hyp. 0}} and {\rm \ref{ellip}}. There exists $C=C(\Lip \, u)>0$ such that for every $x_0\in B'_1$ and $0<2r<1-|x_0|$,  we have
	\begin{gather*}
		\int_{B^+_r(x_0)} |D^2 u|^2 \le {C \over r^2} \int_{B_{2r}^+(x_0)} |\nabla u|^2 + C r^{n+1}.
	\end{gather*}
\end{lemma}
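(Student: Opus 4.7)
The plan is to combine a tangential difference-quotient Caccioppoli estimate with the Euler--Lagrange equation. After the reduction of Section~\ref{s.rettificare} the obstacle is $\{v\ge 0\}$ on $B_1'$, which is invariant under the tangential shifts $x\mapsto x+h e_i$ for $i=1,\ldots,n$; hence difference quotients in those directions are the natural admissible variations, and they will produce bounds on every second derivative $\partial_i\partial_j u$ with $\min(i,j)\le n$. The remaining normal--normal derivative $\partial_{n+1}^2 u$ is then extracted algebraically from the equation, using the uniform ellipticity \ref{ellip}.

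\textbf{Tangential step.} Fix $x_0\in B_1'$ and $0<2r<1-|x_0|$, pick $\eta\in C^\infty_c(B_{2r}(x_0))$ with $\eta\equiv 1$ on $B_r(x_0)$ and $|\nabla\eta|\le C/r$, and for $i\in\{1,\ldots,n\}$ and $h>0$ small set $u_h(x):=u(x+h e_i)$ and $w:=u_h-u$. Since $u\vert_{B_1'}\ge 0$ one also has $u_h\vert_{B_{2r}'(x_0)}\ge 0$, so the convex combinations
\[
v=(1-\eta^2)u+\eta^2 u_h,\qquad \bar v=(1-\eta^2)u_h+\eta^2 u
\]
are admissible, respectively, in the variational inequality \eqref{e.var-ineq} for $u$ and in the translated variational inequality for $u_h$ (whose coefficients are $F(\cdot+h e_i,\cdot,\cdot)$ and $F_0(\cdot+h e_i,\cdot,\cdot)$). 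Inserting $v$ in the former, $\bar v$ in the latter, and subtracting yields
\[
\int_{B_{2r}^+(x_0)}[F(x,u,\nabla u)-F(x+h e_i,u_h,\nabla u_h)]\cdot\nabla(\eta^2 w)+[\text{analogous } F_0\text{-term}]\ge 0.
\]
Writing the bracket as a straight-line interpolation $-\int_0^1\tfrac{d}{dt}F(x+t h e_i,u+t w,\nabla u+t\nabla w)\,dt$, the principal piece $-\int_0^1\langle D_p F\,\nabla w,\nabla w\rangle\eta^2\,dt$ is bounded above by $-\lambda\int\eta^2|\nabla w|^2$ in view of \ref{ellip}; the remaining terms carry either a factor $|\nabla\eta|\le C/r$ multiplying $w$, or an $O(h)$ factor from the explicit $x$-dependence. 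Absorbing the $|\nabla w|$ contributions on the left via Young's inequality and using $\|w\|_{L^2}^2\le h^2\|\partial_i u\|_{L^2}^2$ (via $w=h\int_0^1\partial_i u(x+t h e_i)\,dt$) gives
\[
\int\eta^2|\nabla w|^2\le C h^2\Big(\tfrac{1}{r^2}\int_{B_{2r}^+(x_0)}|\nabla u|^2+r^{n+1}\Big).
\]
Dividing by $h^2$ and letting $h\to 0^+$ yields $\sum_{i=1}^n\int_{B_r^+(x_0)}|\nabla\partial_i u|^2\le (C/r^2)\int_{B_{2r}^+(x_0)}|\nabla u|^2+C r^{n+1}$.

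\textbf{Normal--normal derivative.} In the open half-ball $B_1^+$ the obstacle is inactive, so $u$ weakly solves the Euler--Lagrange equation $-\div F(x,u,\nabla u)+F_0(x,u,\nabla u)=0$. By the tangential step each $\partial_k F_k(x,u,\nabla u)\in L^2_{\loc}(B_r^+(x_0))$ for $k\le n$, hence the equation forces $\partial_{n+1}F_{n+1}(x,u,\nabla u)\in L^2_{\loc}$ as well. Expanding via the chain rule isolates the term $\partial_{p_{n+1}}F_{n+1}\cdot\partial_{n+1}^2 u$, whose coefficient is $\ge\lambda>0$ by \ref{ellip} applied to $\xi=e_{n+1}$; all the other summands lie in $L^2$ by the tangential bound together with the Lipschitz estimate on $u$. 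Solving for $\partial_{n+1}^2 u$ and combining with the preceding step gives the claimed inequality.

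\textbf{The delicate point.} The only subtle step is the simultaneous testing of the inequalities at $u$ and at its translate $u_h$ against convex combinations: admissibility of $v$ and $\bar v$ relies on the flatness of the obstacle achieved in Section~\ref{s.rettificare}, and the two inequalities must be combined so that the ellipticity appears with the correct sign. Once this is arranged, everything else reduces to standard Young-inequality bookkeeping.
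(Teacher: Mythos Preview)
The paper does not supply its own proof of this lemma; it simply quotes \cite[Lemma~2.2]{Frehse77}. Your argument is correct and is essentially the classical one: the zero obstacle on $B_1'$ is invariant under tangential translations, so the convex combinations $v=(1-\eta^2)u+\eta^2 u_h$ and $\bar v=(1-\eta^2)u_h+\eta^2 u$ are admissible in the original variational inequality and its translate, and adding the two resulting inequalities produces the Caccioppoli estimate for $\nabla\partial_i u$, $i\le n$; the remaining derivative $\partial_{n+1}^2 u$ is then recovered from the Euler--Lagrange equation via $\partial_{p_{n+1}} F_{n+1}\ge\lambda$.

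One technical point worth making explicit: the chain-rule expansion in your normal--normal step presupposes that $u\in H^2_{\loc}(B_1^+)$ in the interior, so that $\partial_{n+1}\big(F_{n+1}(x,u,\nabla u)\big)$ can be written pointwise and the coefficient in front of $\partial_{n+1}^2 u$ isolated. This is of course available from the standard difference-quotient argument applied in \emph{all} directions on balls compactly contained in $B_1^+$, where the obstacle is inactive; once known qualitatively, your identity holds a.e.\ and yields the quantitative $L^2$ bound on $\partial_{n+1}^2 u$ up to $B_1'$. The paper itself employs a close variant of the same tangential testing (with level-set truncations) in Lemma~\ref{l:rappo} and Proposition~\ref{prop Caccioppoli w}, so your approach is fully in line with the techniques used elsewhere in the paper.
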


\section{$C^1$ Regularity}

In this section we prove the $C^1$ regularity for the solutions $u\in \mathcal{A}_g$ to the variational inequalities with thin and boundary obstacles:
\begin{equation}\label{e.var-ineq-c1}
\int_{\Omega} \langle F(x,u,\nabla u),\nabla v - \nabla u \rangle
	+ F_0(x,u,\nabla u) (v -u) \ge 0 \quad \forall \, v \in \mathcal{A}_g,
\end{equation}
and
\begin{itemize}
\item	{\bf Interior thin obstacles}: $\Omega = B_1$ and
\begin{equation*}
\mathcal{A}_g := \left\{v\in W^{1,\infty}(B_1) \;:\; v\vert_{\de B_1}= g,  v\vert_{B_1'} \ge 0\right\},
\end{equation*}

\item	{\bf Boundary obstacles}: $\Omega = B_1^+$ and
\begin{equation}
\mathcal{A}_g := \left\{v\in W^{1,\infty}(B_1^+) \;:\; v\vert_{\de B_1^+\setminus B_1'}= g,  v\vert_{B_1'} \ge 0\right\},
\end{equation}
\end{itemize}
with $g\in W^{1,\infty}(\R^{n+1})$ is a given function.

The coincidence set and the free boundary of a solution $u$ are respectively the sets
\begin{gather*}
\Lambda(u) = \left\{(x',0)\in B_1' : u(x',0)=0 \right\},\\
\qquad \Gamma(u) = \left\{(x',0)\in \Lambda(u) : \forall \;r>0\;\exists\;(y',0)\in B_r'(x)\; u(y',0)>0 \right\},
\end{gather*}
i.e., $\Gamma(u)$ is the boundary of $\Lambda(u)$ in the relative topology of $B_1'$.

The main result is the following.

\begin{theorem}\label{t.c1}
Let $u$ be a Lipschitz solution to the variational inequality \eqref{e.var-ineq-c1} for either the interior thin or the boundary obstacle problem.
Then, $u \in C^1(B^+_1 \cup B'_1)$.
\end{theorem}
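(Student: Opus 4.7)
The plan is to combine Frehse's continuity of the tangential derivatives (Theorem~\ref{Frehse}) with a blow-up analysis at free-boundary points to upgrade to continuity of the full gradient on $B_1^+\cup B_1'$. Away from $B_1'$ interior $C^{1,\alpha}$ regularity is classical, and at a point $x_0\in B_1'\setminus\Gamma(u)$ the continuity of $\partial_{n+1}u$ up to $B_1'$ follows from standard boundary elliptic regularity: either $u(x_0)>0$, so that on a relative neighborhood of $x_0$ in $B_1'$ the constraint is inactive and \eqref{e.var-ineq-c1} reduces to an equation with the natural conormal/transmission condition on $B_1'$; or $x_0\in\textup{int}(\Lambda(u))$, in which case $u\equiv 0$ on a flat piece of $B_1'$ and on $B_r^\pm$ the solution satisfies a uniformly elliptic quasilinear equation with zero Dirichlet data on that flat piece. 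Thus only the free boundary points $x_0\in\Gamma(u)$ are delicate, and the remainder of the proof reduces to showing that $\nabla u$ admits a limit at every such $x_0$.

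Fix $x_0\in\Gamma(u)$, translate to $x_0=0$, and consider the rescalings $u_r(y):=u(ry)/r$, which are uniformly Lipschitz. Extract a subsequence $r_k\to 0^+$ with $u_{r_k}\to u_0$ locally uniformly. Since $u(x_0)=0$ is a minimum of $u|_{B_1'}\ge 0$, Theorem~\ref{Frehse} forces $\partial_i u(x_0)=0$ for $i=1,\ldots,n$; by the same continuity the tangential components $\partial_i u_{r_k}(y)=\partial_i u(r_k y)$ converge uniformly on compacta to $0$, so $u_0(y)=f(y_{n+1})$ depends only on the normal variable. Using the $H^2$ bound of Lemma~\ref{stima H^2} to obtain weak $L^2$ convergence of $\nabla u_{r_k}$, together with the $C^1$ regularity of $F,F_0$, one passes to the limit in \eqref{e.var-ineq-c1} and concludes that $u_0$ solves the constant-coefficient variational inequality associated to the frozen operator $L_0=-\div(A_0\nabla\cdot)$, with $A_0=D_pF(0,0,\nabla u(0))$ symmetric positive definite by~\ref{ellip}. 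On either side of $\{y_{n+1}=0\}$ this forces $a^{0}_{n+1,n+1}f''=0$, so $f$ is affine on each half-line; the value $f(0)=0$ and (in the interior case) the Signorini sign condition on the jump of $f'$ at $0$ leave only the expected family of admissible linear profiles.

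The main obstacle, and the new ingredient relative to the boundary-obstacle framework of~\cite{FocardiSpadaro20,Uraltseva85,Uraltseva89}, is the interior thin case: there the co-normal derivative is constrained only on the \emph{a priori} unknown non-coincidence set, so one cannot invoke a fixed Neumann boundary condition to pin down the slopes of $f$ and, above all, one must show that these slopes are independent of the extracted subsequence $r_k$. The plan is to import the paraboloid-comparison technique of Fern\'andez-Real--Serra~\cite{FeSe17}: exploiting~\ref{ellip} and Lemma~\ref{stima H^2}, we shall construct vertical-paraboloid barriers touching $u$ from above and below at the free-boundary point on each side of $\Sigma$, compatible with the unilateral constraint on $B_1'$ and with the transmission coupling across $B_1'$. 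A Caffarelli-style dyadic improvement-of-flatness argument then shows that the oscillation of $\partial_{n+1}u$ on $B_r^+\cup B_r'$ (and on $B_r^-\cup B_r'$ in the interior case) decays as $r\to 0^+$, forcing uniqueness and affinity of the blow-up; combined with Frehse's logarithmic modulus for the tangential derivatives this yields $\nabla u\in C(B_1^+\cup B_1')$, which is exactly the claim of Theorem~\ref{t.c1}.
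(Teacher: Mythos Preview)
Your overall architecture is correct and matches the paper: reduce to free boundary points via interior/boundary regularity, classify blowups as one-dimensional via Frehse and the $H^2$ bound, then use paraboloid barriers in the spirit of \cite{FeSe17} to pin down the normal derivative. However, two points need repair before this becomes a proof.

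First, the description of the limit problem is circular. You freeze coefficients at $A_0=D_pF(0,0,\nabla u(0))$, but the whole point is that $\nabla u(0)$ is not yet known to exist. The correct limiting variational inequality (this is the paper's Lemma~\ref{l:Gamma Conv}) is the \emph{nonlinear} one
\[
\int_\Omega \langle F(z_0,0,\nabla u_\infty),\nabla v-\nabla u_\infty\rangle\ge 0,
\]
and one reads off that $u_\infty$ is one-dimensional directly from $\nabla'u_k\to 0$ and ellipticity, without linearizing.

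Second, and more substantially, for the interior thin obstacle you conflate two logically distinct uses of the paraboloid barriers and describe neither. The paper's argument is not a dyadic improvement-of-flatness for the oscillation of $\partial_{n+1}u$; rather:
\begin{itemize}
\item[(a)] (\emph{Flatness}, Proposition~\ref{p.prop di base}.) If a solution with $\eps$-rescaled operator lies $\eps$-below a strict wedge $a^+<a^-$, then a sliding paraboloid touching from above forces $B'_{1/2}\subset\Lambda(u)$. Applied to rescalings, this shows a free boundary point cannot have a wedge blowup: $a^+=a^-$.
\item[(b)] (\emph{Uniqueness}, Proposition~\ref{p.prop da sotto}.) A second barrier, touching from below in $B_1^+$, shows that once some rescaling is $\eps$-close to $ax_{n+1}$, then $u\ge mx_{n+1}$ on a fixed small half-ball for every $m<a$. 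This propagates to all smaller scales and rules out a second blowup slope.
\end{itemize}
Your proposal jumps from ``admissible linear profiles'' straight to ``slopes independent of subsequence'' and packages everything as ``improvement-of-flatness''; step~(a) is missing entirely, and step~(b) is not an oscillation-decay statement but a one-shot lower bound that holds on a \emph{fixed} ball and hence for all further rescalings. Finally, to conclude continuity of $\partial_{n+1}u$ at $z_0\in\Gamma(u)$ you still need the paper's Proposition~\ref{p.blowup} and the endgame of the proof of Theorem~\ref{t.c1}, which treat sequences $y_k\to z_0$ with $y_k\notin\Gamma(u)$ by projecting onto $\Gamma(u)$ and using the uniform $C^{1,\alpha}$ bounds on the complement; this step is not covered by ``combined with Frehse's modulus''.
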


It is clear that Theorem \ref{t.1} (i) is a corollary of Theorem \ref{t.c1}
by following the local straightening of the obstacle explained in the 
previous section.

The proof of the $C^1$ regularity is made by a blowup analysis following the approach in \cite{FocardiSpadaro20}.
In particular, we proceed in three steps: first we show that the rescaled solutions of the variational inequality have a profile which is one-dimensional; then, by the maximum principle, we prove that around points of the free boundary the blowups are actually flat and unique; and finally, we show how the $C^1$ regularity follows from the existence of unique blowups.

The difference between the two obstacle problems is that for the boundary obstacle problem the natural homogeneous Neumann boundary conditions hold in the subset of $B_1'$ where the solution does not touch the unilateral constraint.
This fact imposes an additional constraint on the solutions which simplifies the analysis. This is what happens in \cite{FocardiSpadaro20}, but this is not the case for the thin obstacle problems, which needs new ideas.

\subsection{Classification of blowups: one-dimensional profiles}

Let $\{z_k\} \subset \Gamma(u)$, $\{t_k\} \subset \R$ such that $0<t_k < 1 - |z_k|$, $t_k \to 0$, $z_k \to z_0 \in \Gamma(u)$. We set
\begin{gather}\label{def di u_k}
	u_k(x) = {u(z_k + t_k x) \over t_k} \qquad \forall x \in B_1.
\end{gather}
We call $u_k$ a rescaling of $u$. Since we want to study the behavior of $u$ around $z_0$, we have to look at the limit of $u_k$. When $z_k=z_0$ for all $k$ and the
limit of the rescalings exists, we call it a {\bf blowup} of $u$ at $z_0$.
Note that $\Lip(u_k) = \Lip(u)$, therefore by Ascoli-Arzel\`a's theorem the set of rescalings is precompat in $L^\infty$.

The first lemma shows that the limits of the rescaled solutions depend only on the normal variable $x_{n+1}$.

\begin{lemma}\label{l:Gamma Conv}
Let $u_k$ be a sequence of rescalings as in \eqref{def di u_k} with
$z_k\to z_0$, $t_k\downarrow0$ and assume that $u_k \to u_\infty$ uniformly.
Then,
\begin{itemize}
\item for the {\bf thin obstacle problem} $u_\infty(x) = w(x_{n+1})$, with
\begin{gather*}
w(t) = 
\begin{cases}
a^+ t & t\ge 0,\\
a^- t & t\le 0
\end{cases}
\qquad \text{for some $a^+ \le a^-$;}
\end{gather*}
\item for the {\bf boundary obstacle problem} $u_\infty(x) = a x_{n+1}$ for some $a\in \R$ such that $F_{n+1}(z_0, 0, 0, a)\le 0$.
\end{itemize}
Moreover, the function $u_\infty$ is a solution
to the thin or the boundary obstacle problem	
\begin{gather*}
\int_{\Omega} \langle F(z_0,0,\nabla u_\infty),\nabla v_\infty -\nabla u_\infty \rangle \ge 0 \qquad \forall \; v_\infty \in \mathcal{A}_{u_\infty}.
\end{gather*}
\end{lemma}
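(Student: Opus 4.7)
The plan is a three-step analysis: first, reduce $u_\infty$ to a function of the normal variable $x_{n+1}$ alone using Frehse's Theorem \ref{Frehse}; second, pass to the limit in the rescaled variational inequality to obtain the equation for $u_\infty$; third, read off the stated piecewise-linear structure and the sign condition on the slopes by integration by parts across $\{x_{n+1}=0\}$.

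For the first step I would invoke Frehse as follows. Since $z_0 \in \Gamma(u) \subset \Lambda(u)$ we have $u(z_0)=0$, and because $u|_{B_1'} \ge 0$, the point $z_0$ minimizes $u|_{B_1'}$; Theorem \ref{Frehse} asserts that each tangential derivative $\partial_i u$ ($i\le n$) extends continuously up to $B_1'$, so $\partial_i u(z_0) = 0$. Applying the modulus of continuity \eqref{e.modulusFrehse} along $z_k + t_k x \to z_0$ gives $\partial_i u_k \to 0$ uniformly on compact sets for $i \le n$, and therefore $u_\infty(x) = w(x_{n+1})$; moreover $w(0) = 0$ because $u(z_k)=0$.

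The second step is the technical heart of the argument. The main difficulty is that to pass to the limit in the nonlinear term $F(\cdot, \cdot, \nabla u_k)$ I need strong convergence of $\nabla u_k$, which the Lipschitz bound alone does not provide. I would extract it from Lemma \ref{stima H^2}: a direct scaling computation centered at $z_k$ yields a uniform bound $\|D^2 u_k\|_{L^2(B_R^+)} \le C(R,\Lip u)$ for every fixed $R>0$ and $k$ large, so by Rellich compactness and the uniform Lipschitz bound we get $\nabla u_k \to \nabla u_\infty$ strongly in every $L^q_\loc$ (along a subsequence, and then, by uniqueness of the limit, along the whole sequence). Once this is secured, scaling the variational inequality for $u$ turns it into a variational inequality for $u_k$ with coefficients $F(z_k + t_k x, t_k z, p)$ and $t_k F_0(\cdots)$. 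Inserting the test function $v_k = u_k + \eta$, admissible whenever $\eta \in W^{1,\infty}_0(\overline{B_1})$ satisfies $\eta|_{B_1'} \ge 0$ (so that $v_k|_{B_1'} \ge 0$), and passing to the limit produces
\begin{gather*}
\int \langle F(z_0, 0, \nabla u_\infty), \nabla \eta \rangle \ge 0,
\end{gather*}
which, since $v_\infty = u_\infty + \eta$ sweeps $\mathcal{A}_{u_\infty}$ as $\eta$ varies, is exactly the claimed limit inequality (the $F_0$ term disappears thanks to the extra factor $t_k$).

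The final step is then purely algebraic. Choosing $\eta$ of arbitrary sign compactly supported in $\{x_{n+1}\neq 0\}$ turns the inequality into an equality; substituting $\nabla u_\infty = w'(x_{n+1})\,e_{n+1}$ it reads $\frac{d}{dx_{n+1}} F_{n+1}(z_0,0,w'(x_{n+1})\,e_{n+1}) = 0$ distributionally on each half-interval, and the strict monotonicity of $p_{n+1}\mapsto F_{n+1}(z_0,0,p_{n+1}\,e_{n+1})$ guaranteed by (H) forces $w'$ to be constant on each side. Together with $w(0)=0$ this gives $w(t) = a^\pm t$ for $\pm t > 0$. The slope relation is extracted by taking $\eta \ge 0$ on $B_1'$ and integrating by parts separately on the two half-balls: the interior divergences vanish and one is left with $\int_{B_1'}[F_{n+1}(z_0,0,a^- e_{n+1}) - F_{n+1}(z_0,0,a^+ e_{n+1})]\,\eta \ge 0$ in the thin obstacle case, which by monotonicity yields $a^+ \le a^-$; in the boundary obstacle case only the upper half-ball contributes and one reads $F_{n+1}(z_0,0,a\,e_{n+1}) \le 0$ directly. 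The main obstacle throughout is the strong compactness of $\nabla u_k$, and Lemma \ref{stima H^2} is precisely the tool that removes it.
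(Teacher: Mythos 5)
Your proof is correct and rests on the same pillars as the paper's: Frehse's Theorem for the tangential derivatives, Lemma~\ref{stima H^2} for the uniform $H^2$ bound giving strong $H^1(B_1^\pm)$ compactness (and thus $L^2$ convergence of $F(z_k+t_kx,t_ku_k,\nabla u_k)$ to $F(z_0,0,\nabla u_\infty)$), and the same integration by parts across $\{x_{n+1}=0\}$ plus the strict monotonicity of $p_{n+1}\mapsto F_{n+1}(z_0,0,p_{n+1}e_{n+1})$ to read off linearity and the slope inequalities. The one genuine difference is the test-function construction used to pass to the limit in the variational inequality. The paper fixes $v_\infty\in\mathcal{A}_{u_\infty}$ and uses the interpolated competitor $w=(1-\varphi_k)u_k+\varphi_k v_\infty$ with a cutoff $\varphi_k$ — admissible because it is a convex combination of two functions nonnegative on $B_1'$ regardless of what $u_\infty|_{B_1'}$ turns out to be — and must then estimate the extra boundary term $\mathrm{II}_k=\int\langle F,\nabla\varphi_k\rangle(v_\infty-u_k)$, which uses uniform convergence and careful annulus bookkeeping. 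You instead establish $u_\infty(x)=w(x_{n+1})$ with $w(0)=0$ from Frehse \emph{first}, so that $u_\infty|_{B_1'}\equiv 0$; this lets you test directly with $v_k=u_k+\eta$ for $\eta$ vanishing on $\partial B_1$ with $\eta|_{B_1'}\ge 0$, which sweeps $\mathcal{A}_{u_\infty}$ precisely because $\eta=v_\infty-u_\infty$ then inherits the sign constraint on $B_1'$, and the $\mathrm{II}_k$-type term never appears. Your reordering buys a cleaner passage to the limit; the paper's cutoff is slightly more robust in that it does not need $u_\infty|_{B_1'}=0$ to be known in advance, and that is why the paper's proof can afford to derive the limiting variational inequality before the one-dimensional reduction.
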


\begin{proof}
We start by rescaling the variational inequality \eqref{e.var-ineq-c1}:
set $B_{k} := B_{t_k}(z_k)$ and let $w\in \mathcal{A}_{u_k}$,
then we choose
	\begin{gather*}
		v(y) = \begin{cases}
			u(y) & y \in \Omega \setminus B_{k},\\
			t_k w\left( {y-z_k\over t_k} \right)  & y \in B_{k},
		\end{cases}
	\end{gather*}
recalling that $\Omega=B_1$ or $\Omega=B_1^+$ for the thin and boundary obstacle problem, respectively.
It is straightforward to verify that $v \in \mathcal{A}_g$ so that, after a change of variables, we get
\begin{align*}
		\int_{\Omega} &\langle F(z_k+t_k x,t_k u_k,\nabla u_k),\nabla w -\nabla u_k \rangle \, +
		\\& + \int_{\Omega} t_k F_0(z_k+t_k x,t_k u_k,\nabla u_k) (w-u_k) \ge 0 \qquad \forall w \in \mathcal{A}_{u_k}.
	\end{align*}
Thus, $u_k$ is a solution to a rescaled problem and the associated rescaled operator is
	\begin{equation}\label{operatore_k}
		H_k u_k \equiv - \div \left(F(z_k+t_k x,t_k u_k,\nabla u_k)\right) + t_k F_0(z_k+t_k x,t_k u_k,\nabla u_k).
	\end{equation}
Now we fix $v_\infty \in \mathcal{A}_{u_\infty}$. For every $k\ge 1$, we define
	\begin{gather*}
		\varphi_k(x)=\begin{cases}
			1 & |x| \le 1-{1\over k},\\
			k^2-k(k+1)|x| & 1-{1\over k} \le |x| \le 1-{1\over k+1},\\
			0 & 1-{1\over k+1}\le |x|.
		\end{cases}
	\end{gather*}
	
We then choose $w = (1-\varphi_k)u_k + \varphi_k v_\infty \in \mathcal{K}_{u_k}$: from the variational inequality satisfied by $u_k$ we get that $\mbox{I}_k + \mbox{II}_k + \mbox{III}_k \ge 0$, with
	\begin{gather*}
		\mbox{I}_k = t_k \int_{\Omega} \varphi_k F_0(z_k+t_k x,t_k u_k,\nabla u_k) (v_\infty-u_k),\\
		\mbox{II}_k = \int_{\Omega} \langle F(z_k+t_k x,t_k u_k,\nabla u_k),\nabla \varphi_k \rangle (v_\infty - u_k),\\
		\mbox{III}_k = \int_{\Omega} \varphi_k \langle F(z_k+t_k x,t_k u_k,\nabla u_k),\nabla v_\infty -\nabla u_k \rangle.
	\end{gather*}
	
Now we want to compute the limits of the above quantities as $k \to +\infty$. First of all, since the integrand in $\mbox{I}_k$ is bounded uniformly on $k$ and $t_k\to0$, we deduce that $\mbox{I}_k \to 0$. Now we show that $\mbox{II}_k \to 0$ as well. For every $k\ge 1$, there exists $x_k \in B_1$ such that $1-{1 \over k} \le |x_k| \le 1-{1 \over k+1}$ and
	\begin{gather*}
		\sup\limits_{1-{1 \over k} \le |x| \le 1-{1 \over k+1}} |v_\infty(x)-u_k(x)| = |v_\infty(x_k)-u_k(x_k)|.
	\end{gather*}
Thus we have
	\begin{gather*}
		|\mbox{II}_k| \le C\,k(k+1)\int_{1-{1 \over k} \le |x| \le 1-{1 \over k+1}} |v_\infty(x) - u_k(x)| \le
		C_k |v_\infty(x_k)-u_k(x_k)|,
	\end{gather*}
with $C_k = C\,k(k+1)(|B_{1-{1 \over k+1}}| - |B_{1-{1 \over k}}|)$.
Note that  $C_k\to C (n + 1) \omega_{n + 1}$, therefore it is enough to show that $|v_\infty(x_k)-u_k(x_k)| \to 0$. For some subsequence (which we will not relabel) we have that $x_k \to x_\infty \in \de B_1$. Since $v_\infty$ and $u_\infty$ are continuous and agree at the boundary, and since $u_k$ converges uniformly to $u_\infty$, we have that
\begin{align*}
|v_\infty(x_k)-u_k(x_k)|&\leq |v_\infty(x_k)-v_\infty(x_\infty)| + |u_\infty(x_\infty)-u_k(x_\infty)| + |u_k(x_\infty)-u_k(x_k)|\\
&\leq \Lip(v_\infty) |x_k-x_\infty| + \|u_\infty-u_k\|_\infty + \Lip(u_k) |x_k-x_\infty|
\to 0,
\end{align*}
where we used that $\Lip(u_k) = \Lip(u)$.

Finally, we want to compute the limit of the quantity $\mbox{III}_k$. For this purpose, we set
\begin{gather*}
		\mbox{III}'_k = \int_{B_1} \langle F(z_k+t_k x,t_k u_k,\nabla u_k),\nabla v_\infty -\nabla u_k \rangle,
\end{gather*}
and we notice that $|\mbox{III}_k - \mbox{III}'_k| \to 0$ because
the integrand is uniformly bounded and $1-\varphi_k$ is supported in $B_1\setminus B_{1-1/k}$. To show that $\mbox{III}'_k$ converges, we need something better than uniform convergence. So we apply Lemma \ref{stima H^2} to $u$ so that, for every $k \ge 1$ such that $2t_k < 1 - |z_k|$, we have
	\begin{gather*}
		\int_{B^+_1} |D^2 u_k(x)|^2 \,dx = t_k^2 \int_{B^+_1} |D^2 u(z_k + t_k x)|^2 \,dx =\\
		= t_k^{1-n} \int_{B_{2t_k}^+(z_k)} |D^2 u(y)|^2 \,dy \le C\, t_k^{-1-n} \int_{B_{2t_k}^+(z_k)} |\nabla' u|^2 + C t_k^2 \le C.
	\end{gather*}
Therefore, $\{u_k\}$ is bounded in $H^2(B^+_1)$, and thus it has a weakly convergent subsequence in $H^2(B^+_1)$, which we will not relabel:
$u_k \rightharpoonup u_\infty$ in $H^2(B^+_1)$ and $u_k \to u_\infty$ in $H^1(B^+_1)$. Up to pass to further subsequences, we can also assume that $\nabla u_k \to \nabla u_\infty$ a.e. in $B_1^+$ and as a consequence
\begin{gather*}
F(z_k+t_k x,t_k u_k,\nabla u_k) \to F(z_0,0,\nabla u_\infty) \quad \mbox{in } L^2(B_1).
\end{gather*}
Indeed by {\rm \ref{Hyp. 0}}
\begin{gather*}
\int_{B_1} |F(z_k+t_k x,t_k u_k,\nabla u_k) - F(z_0,0,\nabla u_\infty)|^2 \le
		\\ \le C\int_{B_1} |z_k+t_k x - z_0|^2 + |t_k u_k|^2 + |\nabla u_k-\nabla u_\infty|^2 \to 0.
\end{gather*}
Finally, since also $\nabla u_k \to \nabla u_\infty$ in $L^2(B_1)$, we get
\begin{gather*}
\mbox{III}'_k \to \int_{B_1} \langle F(z_0,0,\nabla u_\infty),\nabla v_\infty -\nabla u_\infty \rangle.
\end{gather*}
	
We have indeed shown that $u_\infty$ is a solution to the thin or the boundary obstacle problem	
\begin{gather*}
\int_{\Omega} \langle F(z_0,0,\nabla u_\infty),\nabla v_\infty -\nabla u_\infty \rangle \ge 0 \qquad \forall \; v_\infty \in \mathcal{A}_{u_\infty},
\end{gather*}
with associated operator
\begin{gather*}
H_\infty u_\infty = - \div\left( F(z_0,0,\nabla u_\infty) \right).
\end{gather*}
By Frehse's Theorem \ref{Frehse} $\nabla'u(z_k) = 0$ and
\begin{gather*}
|\nabla' u_k(x)| = |\nabla' u(z_k + t_k x)-\nabla'u(z_k)|\le \omega(t_k) \to 0.
\end{gather*}
In other words, $\nabla' u_k \to 0$ uniformly on $B_1$, thus implying $\nabla' u_\infty \equiv 0$, i.e., $u_\infty(x) = w(x_{n+1})$ for some Lipschitz function $w$.

Considering the ellipticity condition {\rm \ref{ellip}} (applied with $\xi = e_{n+1}$)
\begin{gather*}
\de_{p_{n+1}} F_{n+1}(z_0,0,\nabla u_\infty) \ge \lambda>0,
\end{gather*}
we infer that $w$ is a linear function, i.e., there exists $a^+\in \R$ such that $w(t) = a^+ t$ for all $t\geq 0$.

As for the thin obstacle problem, we apply the same considerations to $B_1^-$,
infering the existence of $a^+, a^- \in \R$ such that
	\begin{equation*}
		w(x_{n+1}) = \begin{cases}
			a^+ x_{n+1} & x_{n+1}\ge 0,\\
			a^- x_{n+1} & x_{n+1}\le 0.
		\end{cases}
	\end{equation*}
Recalling that $u_\infty$ is a supersolution to $H_\infty$ in $B_1$ we have that, for every $\varphi \in C^\infty_0(B_1)$ with $\varphi \ge 0$,
	\begin{gather*}
		0\le \int_{B_1} \langle F(z_0,0,\nabla u_\infty),\nabla\varphi \rangle =
		\int_{B'_1}  (F_{n+1}(z_0,0,0,a^-) - F_{n+1}(z_0,0,0,a^+))\,\varphi,
	\end{gather*}
thus implying that 	
\begin{gather*}
	F_{n+1}(z_0,0,0,a^+)\le F_{n+1}(z_0,0,0,a^-),
\end{gather*}
and by ellipticity {\rm \ref{ellip}} ($\de_{p_{n+1}} F_{n+1}>0$) we
deduce $a^+ \leq a^-$.

Finally note that, for the boundary obstacle problem, for every $\varphi\geq 0$ we have that
\begin{gather*}
0\leq \int_{B_1^+} \langle F(z_0,0,\nabla u_\infty),\nabla \varphi \rangle = 
\int_{B_1^+} \langle F(z_0,0,0,a),\nabla \varphi \rangle = - \int_{B_1'} F_{n+1}(z_0,0,0,a) \varphi,
\end{gather*}
i.e., $F_{n+1}(z_0,0,0,a)\le 0$ which conclude the proof of the classification of the blowups for Signorini's problem.
\end{proof}

\subsection{Construction of barriers}
We say that a differential operator $H$ satisfying {\rm \ref{Hyp. 0}}, {\rm \ref{ellip}} is $t$-rescaled ($t >0$) if for every $M > 0$ there exists $L  = L(M) > 0$ such that
\begin{gather}
|-\div_x F(x,z,p) - \langle \de_z F(x,z,p), p \rangle + F_0(x,z,p)| \le t L,\notag\\
|D_p F(x,z,p)| \le L,\label{e.rescaled H}\\
\forall \; x \in B_1 \quad |z| \le M \quad |p| \le M\notag.
\end{gather}

We saw in the proof of Lemma \ref{l:Gamma Conv} that, if $u$ solves the thin obstacle problem with operator $H$, then $u_k$ solves the thin obstacle problem with operator $H_k$ which from its very definition \eqref{operatore_k} turns out to be $t_k$-rescaled.

In the next lemma, we follow \cite{FeSe17} and construct suitable quadratic functions which act as barriers for $t$-rescaled operators.

\begin{lemma}\label{l:soprasoluzione stretta}
For every $m_0$, $\gamma_0 > 0$, there exist $K$, $t_0$, $C>0$ depending on $n$, $m_0$, $\gamma_0$, $\lambda$ in {\rm \ref{ellip}} and $L$ in \eqref{e.rescaled H}, such that for every 
	\begin{gather*}
		x_0=(x'_0,0) \in B'_{1 / 2}
		\qquad |m| \le m_0
		\qquad 		0 < \gamma \le \gamma_0
		\qquad 0 < t \le t_0
	\end{gather*}
the function
\begin{gather*}
		\eta(x)= t\left(|x'-x'_0|^2 - Kx_{n+1}^2\right) + mx_{n+1} + \gamma
\end{gather*}
satisfies 
\begin{equation}
\max\left\{\|\eta\|_{L^\infty(B_{1/2}(x_0))},\|\nabla\eta\|_{L^\infty(B_{1/2}(x_0))}\right\} \leq 1 + m_0 + \gamma,\label{e.eta2}
\end{equation}
and 
\begin{gather}
H_t (\eta)(x) \ge Ct		\quad\text{and} \quad		H_t (-\eta)(x) \le -Ct
\qquad		\forall \; x \in B_{1/2}(x_0),
\end{gather}
for every $t$-rescaled operator $H_t$.
\end{lemma}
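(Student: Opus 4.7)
The plan is to exploit the explicit diagonal Hessian of $\eta$ and use ellipticity in the $x_{n+1}$-direction to dominate both the (small) positive tangential curvature and the lower-order terms, which by definition of a $t$-rescaled operator have size at most $tL$. Fix $M := 1 + m_0 + \gamma_0$, and let $\lambda = \lambda(M)$ from \ref{ellip} and $L = L(M)$ from \eqref{e.rescaled H}.

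The main computation expands $H_t\eta$ by the chain rule as
\begin{equation*}
H_t\eta(x) = -\sum_{i,k}\partial_{p_k}F_i(x,\eta,\nabla\eta)\,\partial_{ik}\eta(x) + R(x),
\end{equation*}
where $R(x) = \bigl(-\div_x F - \langle \partial_z F,\nabla\eta\rangle + F_0\bigr)(x,\eta(x),\nabla\eta(x))$ is precisely the quantity appearing in \eqref{e.rescaled H} and therefore satisfies $|R|\le tL$. Since $\partial_{ii}\eta = 2t$ for $i\le n$, $\partial_{n+1,n+1}\eta = -2tK$, and the off-diagonal entries vanish, the principal term reduces to $-2t\sum_{i=1}^n \partial_{p_i}F_i + 2tK\,\partial_{p_{n+1}}F_{n+1}$. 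Ellipticity \ref{ellip} applied with $\xi=e_{n+1}$ gives $\partial_{p_{n+1}}F_{n+1}\ge \lambda$, while the bound $|D_pF|\le L$ controls the tangential entries, so the principal part is at least $2t(K\lambda - nL)$. Combining with $|R|\le tL$,
\begin{equation*}
H_t\eta \ge t\bigl(2K\lambda - 2nL - L\bigr),
\end{equation*}
and the first inequality follows with $C := 1$ after choosing $K$ so large that $2K\lambda - 2nL - L \ge 1$.

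Two checks close the argument. First, $t_0$ is shrunk, depending on $K,m_0,\gamma_0$, so that on $B_{1/2}(x_0)$ both \eqref{e.eta2} holds and $(x,\eta,\nabla\eta)$ stays inside the region $B_1\times[-M,M]\times\overline{B_M}$ where $\lambda(M)$ and $L(M)$ apply; the straightforward estimates $|\eta|\le t(1+K)/4 + m_0/2 + \gamma$ and $|\nabla\eta|\le t(\sqrt n+K) + m_0$ make any $t_0$ with $t_0(\sqrt n + K + 1)\le 1$ acceptable. Second, $H_t(-\eta)\le -Ct$ is obtained by the same computation with the sign of the Hessian flipped and all coefficients evaluated at $(x,-\eta,-\nabla\eta)$; the principal part now lies below $-2t(K\lambda-nL)$ and the residual is again bounded by $tL$, giving identical constants. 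The main conceptual point, and the only mild obstacle, is that in the genuinely nonlinear setting $H_t(-\eta)\neq -H_t(\eta)$, so the two inequalities cannot be deduced from one another; the argument works because $\lambda(M)$ and $L(M)$ are uniform in the sign of $(z,p)$, and step one places both $(x,\eta,\nabla\eta)$ and $(x,-\eta,-\nabla\eta)$ in the admissible set.
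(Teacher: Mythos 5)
Your proposal is correct and takes essentially the same route as the paper: expand $H_t\eta$ via the chain rule, isolate the principal Hessian term, use ellipticity on $e_{n+1}$ together with $|D_pF|\le L$ on the tangential block, bound the remainder by $tL$ from \eqref{e.rescaled H}, and pick $K$ large; the same computation with reversed signs handles $-\eta$, and you correctly flag that $H_t(-\eta)\neq -H_t(\eta)$ so both must be done. The only cosmetic differences are the choice $C=1$ rather than $C=L(M)$, and a harmless overestimate ($|\nabla\eta|\le t(\sqrt n + K)+m_0$, where the $\sqrt n$ should not appear), neither of which affects the argument.
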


\begin{proof}
We compute
\begin{gather*}
\nabla \eta(x) = (2t(x'-x'_0), -2Kt x_{n+1} +m) \in \R^n \times \R,\\
D^2 \eta(x) = 2t\sum_{i=1}^n e_i \otimes e_i - 2 Kt e_{n+1} \otimes e_{n+1},
	\end{gather*}
and we estimate on $B_{1/2}(x_0)$
\begin{gather*}
\|\eta\|_\infty \leq {1 \over 4}(1 + K)t + \frac{m_0}{2} + \gamma, \qquad
\|\nabla \eta\|_\infty \le (1 + K) t + m_0.
\end{gather*}
Therefore, setting $t_0 = {1 \over 1 + K}>0$, we ensure the validity of \eqref{e.eta2}.
Setting $M = 1 + m_0 + \gamma_0$, we have ($F, F_0$ and their derivatives
are computed in $(x, \eta(x), \nabla \eta(x))$ and $\lambda(M)$ is the function in {\rm \ref{ellip}})
\begin{align*}
H_t \eta &= - \div_x F- \langle \de_z F, \nabla \eta \rangle  - \tr(D_p F \cdot D^2 \eta) + F_0 \\
&\geq - t L(M) - 2t\sum_{i=1}^n \langle D_p F e_i, e_i\rangle + 2 K t \langle D_p F e_{n+1}, e_{n+1}\rangle \ge \\
&\ge -(1+2n)t L(M) + 2Kt \lambda (M) \ge t L(M),
\end{align*}
provided
\begin{gather*}
K \ge (1+n){L(M) \over \lambda(M)}.
\end{gather*}

Similarly, computing the operator for $-\eta$ (hence the arguments of $F,F_0$ and
their derivatives are $(x,-\eta(x), -\nabla \eta(x))$, we get
\begin{align*}
H_t (-\eta) &= - \div_x F + \langle \de_z F, \nabla \eta \rangle  + \tr(D_p F \cdot D^2 \eta) + F_0 \\
&\leq t L(M) + 2t\sum_{i=1}^n \langle D_p F e_i, e_i\rangle - 2 K t \langle D_p F e_{n+1}, e_{n+1}\rangle \le \\
&\leq (1+2n)t L(M) - 2Kt \lambda (M) \le - t L(M).
\end{align*}
\end{proof}

\subsection{Flatness of blowups for thin obstacle problem}

Next, we prove a core result which is crucial in the classification of blowups for the thin obstacle problem: following \cite{FocardiSpadaro20} we show that all blowups around free boundary points need to be flat, by showing that edge-shaped profiles must correspond to points in the interior of the coincidence set.

We recall the notation:
\begin{gather}\label{e.tettuccio}
w(t) = 
\begin{cases}
a^+ t & t\ge 0,\\
a^- t & t\le 0
\end{cases}
\qquad \text{for some $a^+ \le a^-$}.
\end{gather}

\begin{proposition}\label{p.prop di base}
	Suppose $a^+ < a^-$. There exists $\eps = \eps(n, a^+, a^-, \lambda, L)>0$ ($\lambda$ is the ellipticity bound in {\rm \ref{ellip}} and
$L$ is the bound of the rescaled operators in \eqref{e.rescaled H}.
) such that, if $u$ is a solution to the thin obstacle problem in $B_1$ with $\eps$-rescaled operator $H$, such that
	\begin{equation}\label{vicinanza}
		u(x) \le w(x_{n+1}) + \eps \qquad \forall x \in B_1,
	\end{equation}
then $B'_{1 / 2} \subset \Lambda(u)$.
\end{proposition}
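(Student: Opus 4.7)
My plan is to argue by contradiction using a supersolution barrier of ``concave-kink'' type, following the strategy of \cite{FeSe17, FocardiSpadaro20}. Given $x_0\in B'_{1/2}$, the candidate barrier is obtained by pasting the two paraboloids of Lemma~\ref{l:soprasoluzione stretta} with slopes $m=a^+$ on the upper half ball and $m=a^-$ on the lower half, which together give
\begin{gather*}
\eta(x)=\varepsilon(|x'-x_0'|^2-K x_{n+1}^2)+w(x_{n+1})+\gamma.
\end{gather*}
By Lemma~\ref{l:soprasoluzione stretta}, $\eta$ is a strict classical supersolution on each half ball, $H_\varepsilon\eta\ge C\varepsilon$. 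The normal derivative of $\eta$ jumps from $a^-$ below to $a^+$ above across $\Sigma$; since $a^+<a^-$, the jump is of the ``concave'' sign and, by the ellipticity hypothesis~\ref{ellip},
\begin{gather*}
F_{n+1}(x,0,0,a^-)-F_{n+1}(x,0,0,a^+)\ge \lambda\,(a^--a^+)>0.
\end{gather*}
Integrating by parts against admissible test functions $\varphi\ge 0$ with $\varphi|_\Sigma\ge 0$ then shows that $\eta$ is a distributional supersolution of the full thin obstacle variational inequality on $B_{1/2}(x_0)$, despite the kink at $\Sigma$.

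The choice $\gamma=\varepsilon(1+K/4)$ ensures, using the hypothesis $u\le w+\varepsilon$ and the bound $|x'-x_0'|^2-Kx_{n+1}^2\ge -K/4$ on $\partial B_{1/2}(x_0)$, that $\eta\ge u$ on the sphere; on $\Sigma\cap B_{1/2}(x_0)$ one has $\eta(x',0)=\varepsilon|x'-x_0'|^2+\gamma>0$, so the barrier is admissible with respect to the obstacle. The comparison principle for super/sub-solutions of the thin obstacle problem, proved by plugging $(u-\eta)^+$ both into the variational inequality of $u$ and into the supersolution condition for $\eta$ and exploiting the monotonicity of $F$ in $p$, then yields $\eta\ge u$ throughout $B_{1/2}(x_0)$ and a first qualitative estimate $u(x_0)\le\gamma=O(\varepsilon)$.

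To upgrade this $O(\varepsilon)$ bound to the sharp conclusion $u(x_0)=0$, I would implement a sliding/paraboloid-contact argument in the spirit of \cite{FeSe17}: let $\gamma_\ast=\inf\{\gamma\ge 0\;:\;\eta\ge u\text{ in }B_{1/2}(x_0)\}$ and examine a contact point $z\in \overline{B_{1/2}(x_0)}$ for the critical value. The strict supersolution inequality $H_\varepsilon\eta\ge C\varepsilon$ forbids $z$ in the interior of either half ball; the strictly positive jump $\lambda(a^--a^+)$ at $\Sigma$ forbids $z\in\Sigma\cap\{u>0\}$, where $u$ classically solves $H_\varepsilon u=0$. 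Quantifying these strictness margins against $\lambda$, $L$, and the gap $a^--a^+$, one aims to exclude a contact on $\partial B_{1/2}(x_0)$ for $\varepsilon$ small enough in terms of $(n,a^+,a^-,\lambda,L)$, leaving only $z\in\Sigma\cap\Lambda(u)$, which forces $\gamma_\ast=0$ and $x_0\in\Lambda(u)$. I expect the hardest step to be precisely this quantitative sliding on the sphere---extracting a strict inequality at the boundary that uses the positive gap $a^--a^+$ and not just the closeness $\|u-w\|_\infty\le\varepsilon$---which is the role played by the paraboloid comparison principle imported from \cite{FeSe17} and constitutes the main new ingredient needed to extend the argument from the linear and parametric settings to a general nonlinear variational inequality.
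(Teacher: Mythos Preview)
Your sliding strategy is the right one, but the barrier you choose cannot be slid down to $\gamma=0$: with slopes exactly $a^+$ above and $a^-$ below, the kinked function $\eta$ matches $w$ linearly, so on $\partial B_{1/2}(x_0)$ you only have $\eta-u\ge \eps\big(\tfrac14-(1+K)x_{n+1}^2\big)+\gamma-\eps$, which becomes negative near the poles as soon as $\gamma<\eps(1+K/4)$. The gap $a^--a^+$ has been entirely spent at the kink on $\Sigma$ and leaves no residual positivity on the sphere; there is nothing left to ``quantify'' there, and the boundary contact you flag as the hardest step is in fact unavoidable with this barrier. This is why you get stuck at $u(x_0)\le O(\eps)$.

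The fix, which is what the paper does, is to take a \emph{smooth} quadratic barrier with the single intermediate slope $m=\tfrac{a^++a^-}{2}$:
\[
\eta(x)=4\eps\big(|x'-x_0'|^2-Kx_{n+1}^2\big)+m\,x_{n+1}.
\]
Now on $\partial B_{1/2}(x_0)$ one has $\eta(x)-w(x_{n+1})=4\eps\big(\tfrac14-(1+K)x_{n+1}^2\big)+\tfrac{a^--a^+}{2}|x_{n+1}|$, and the linear term in $|x_{n+1}|$ dominates the quadratic one provided $\eps\le \tfrac{a^--a^+}{4(1+K)}$; this gives $\eta\ge w+\eps\ge u$ strictly on the sphere for \emph{every} $\gamma\ge 0$, so the sliding goes all the way to $\gamma=0$. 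As a bonus, the barrier is $C^2$, so no distributional supersolution argument across $\Sigma$ is needed: at the critical $\gamma>0$ the contact point is either on $B'_{1/2}(x_0)$ (forcing $u>0$ there, hence $Hu=0$ nearby) or in an open half ball, and in either case Harnack/strong maximum principle against the strict supersolution $\eta+\gamma$ gives the contradiction.
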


\begin{proof}
Let $m_0 = \max \{|a^-|, |a^+|\}$ and $\gamma_0 = 3 + 2 m_0$ in 
Lemma \ref{l:soprasoluzione stretta} and let $K$ and $t_0 > 0$ be the corresponding constants.
Fix $x_0 = (x'_0,0) \in B'_{1 / 2}$, $m = {a^- + a^+ \over 2}$ and $0 < \eps < \min\{1, {1 \over 4}t_0\}$ and define
\begin{gather*}
\eta(x) = 4 \eps (|x' - x'_0|^2 - Kx_{n+1}^2) + mx_{n+1},\\
A = \{s>0 \, : \, u(x) < \eta(x) + s \quad \forall \; x \in \overline{B_{1/2}}(x_0)\}.
\end{gather*}
By \eqref{e.eta2} we have that
	\begin{gather*}
		u(x) - \eta(x) \le w(x_{n+1}) + \eps + 1 + m_0
		\le 2 + 2 m_0 < \gamma_0 \qquad \forall \; x \in B_{1/2}(x_0),
	\end{gather*}
thus implying that $\gamma_0 \in A$. 
Clearly $A$ is a open half line: $A = (\gamma,+\infty)$ for some $\gamma \geq 0$. We want to show that $\gamma = 0$. 

Suppose by contradiction that $0 < \gamma \le \gamma_0$.
By definition $u\leq \eta + \gamma$ and there exists $\overline{x} \in \overline{B_{1/2}}(x_0)$ such that $u(\overline{x})= \eta(\overline{x}) + \gamma$.
If $\eps$ is small enough, we have that
\begin{gather}\label{e.ordine al bordo}
\eta(x)\ge w(x_{n+1}) + \eps \qquad \forall \; x \in \de B_{1/2}(x_0).
\end{gather}
In fact, for $x \in \de B_{1/2}(x_0)$ we have  $|x'-x'_0|^2 = {1 \over 4} - x_{n+1}^2$, so the above inequality reduces to
\begin{gather*}
w(x_{n+1}) \le mx_{n+1} - 4(1+K)\eps x_{n+1}^2 \qquad \mbox{for } |x_{n+1}| \le {1 \over 2}.
\end{gather*}
If $x_{n+1}>0$, this amounts to show
\begin{gather*}
a^+ \le m - 4(1+K)\eps x_{n+1} \qquad \mbox{for } 0 \leq x_{n+1} \leq {1 \over 2}.
\end{gather*}
This is true if $0<\eps\le{a^- - a^+ \over 4(1+K)}$. 
Similarly, this same restriction on $\eps$ implies that a similar inequality holds when $x_{n+1}<0$, i.e.,
\begin{gather*}
a^-\geq m - 4(1+K)\eps x_{n+1} \qquad \mbox{for } - {1 \over 2} \leq  x_{n+1} \leq 0 ,
\end{gather*}
so that we have \eqref{e.ordine al bordo}.
	
Thus, we have
\begin{gather*}
u(x)\le w(x_{n+1}) + \eps \le \eta(x) < \eta(x) +\gamma \quad \forall\;x \in \de B_{1/2}(x_0).
\end{gather*}
This proves that $\overline{x} \notin \de B_{1/2}(x_0)$.
We also notice that, if $\overline{x} \in B'_{1/2}(x_0)$, we would have $u(\overline{x}) = \eta(\overline{x}) + \gamma \ge \gamma >0$. Thus, $\overline{x} \in B_{1/2}(x_0) \setminus \Lambda(u)$.
This means that $Hu=0$ in a neighborhood of $\overline{x}$ and $u$ is touched from above in $\overline{x}$ by the function $\eta + \gamma$, which is a strict supersolution by Lemma \ref{l:soprasoluzione stretta}.
However, by Harnack's inequality (see Corollary \ref{c.harnack}) a solution to $H = 0$ cannot be touched from above by a strict supersolution at an interior point, which leads to a contradiction.
	
Thus, we conclude that $\gamma = 0$: in particular,
\begin{gather*}
0 \le u(x_0) \leq \eta (x_0) + s = s \qquad \forall \; s > 0
\qquad \Longrightarrow \qquad u(x_0) = 0.
\end{gather*}
Since $x_0\in B_{1/2}'$ is arbitrary, we conclude that $B_{1/2}'\subset \Lambda(u)$.
\end{proof}

The main consequence of Proposition \ref{p.prop di base} is that all blowups
of a solution to the thin obstacle problem at a free boundary point are flat, i.e.,
must be of form \eqref{e.tettuccio} with $a^-=a^+$.

\begin{corollary}\label{c.flatness}
Let $u$ be a Lipschitz solution to the thin obstacle problem \eqref{e.var-ineq-c1}
and let $z_0\in \Gamma(u)$.
Then, all blowups of $u$ at $z_0$ are of the form $a x_{n+1}$ for some $a\in \R$.
\end{corollary}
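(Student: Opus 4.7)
The plan is to argue by contradiction using Proposition \ref{p.prop di base}. Let $u_k$ be a rescaling at $z_0$ with $t_k \downarrow 0$ and $u_k \to u_\infty$ uniformly; by Lemma \ref{l:Gamma Conv} we have $u_\infty(x) = w(x_{n+1})$ with $w$ of the edge-shape form \eqref{e.tettuccio} for some $a^+ \le a^-$. Assume for contradiction that $a^+ < a^-$ strictly, and let $\eps = \eps(n, a^+, a^-, \lambda, L) > 0$ be the threshold produced by Proposition \ref{p.prop di base}.

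Each $u_k$ solves the thin obstacle problem with the operator $H_k$ defined in \eqref{operatore_k}, which is $t_k$-rescaled in the sense of \eqref{e.rescaled H}. Since $t_k \downarrow 0$, eventually $t_k \le \eps$, and so $H_k$ is $\eps$-rescaled. Moreover, by the uniform convergence $u_k \to w$, for all $k$ sufficiently large we have $u_k(x) \le w(x_{n+1}) + \eps$ on $B_1$, i.e., the flatness hypothesis \eqref{vicinanza} of Proposition \ref{p.prop di base} is satisfied. Applying that proposition to $u_k$ yields $B'_{1/2} \subset \Lambda(u_k)$ for every sufficiently large $k$.

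On the other hand, the condition $z_0 \in \Gamma(u)$ gives, for every $r > 0$, a point $(y',0) \in B'_r(z_0)$ with $u(y',0) > 0$. Choosing $r = t_k/2$ and setting $x_k = (y - z_0)/t_k$, one obtains $x_k \in B'_{1/2}$ with $u_k(x_k) = u(y',0)/t_k > 0$, contradicting $B'_{1/2} \subset \Lambda(u_k)$. Hence $a^+ = a^-$ and the blowup is of the form $a\,x_{n+1}$. The main subtlety in implementing this is checking that the constant $L = L(M)$ in \eqref{e.rescaled H} for the family $\{H_k\}$ can be chosen uniformly in $k$, so that a single $\eps$ from Proposition \ref{p.prop di base} is valid for all $k$ large; this is where one uses the $C^1$ regularity of $F, F_0$ together with the \emph{a priori} Lipschitz bound on $u$, which confines the arguments $(z_0 + t_k x, t_k u_k, \nabla u_k)$ to a fixed compact set on which all relevant derivatives of $F, F_0$ are bounded.
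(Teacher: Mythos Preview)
Your argument is correct and follows essentially the same route as the paper: assume a wedge blowup with $a^+<a^-$, apply Proposition~\ref{p.prop di base} to the rescalings $u_k$ (which solve the thin obstacle problem for $t_k$-rescaled operators) once $t_k<\eps$ and $u_k\le w+\eps$, and derive a contradiction with $z_0\in\Gamma(u)$. The paper phrases the contradiction simply as ``$0\in\Gamma(u_k)$'', whereas you exhibit an explicit point $x_k\in B'_{1/2}$ with $u_k(x_k)>0$; these are the same observation, and your remark on the uniformity of $L$ in \eqref{e.rescaled H} is a correct elaboration of a point the paper leaves implicit.
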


\begin{proof}
Let $u_k$ denote the rescalings at $z_0$: $u_k(z) = u(z_0+t_kx)/t_k$ for some sequence $t_k\downarrow 0$.
Since $\Lip (u_k) \le \Lip(u)$ and $\|u_k\|_{\infty} \le \Lip (u)$, Ascoli-Arzelà's Theorem and Lemma \ref{l:Gamma Conv}, a subsequence of $u_k$ converges uniformly on $B_1$ to a one-dimensional wedge $w$ in \eqref{e.tettuccio} with slopes $a^+ \le a^-$.
	
Suppose by contradiction that $a^+ < a^-$. We consider $\eps > 0$ given by Proposition \ref{p.prop di base}. Choose next $k \geq 1$ big enough to guarantee $t_k < \eps$ and to guarantee that \eqref{vicinanza} holds with $u_k$ in place of $u$. Since $u_k$ is a solution to the thin obstacle problem with operator $H_k$ and the operator $H_k$ is $t_k$-rescaled ($t_k <\eps$), we can apply Proposition \ref{p.prop di base} and infer that $B'_{1/2} \subset \Lambda(u_k)$. But $0 \in \Gamma(u_k)$ by hypotheses, which leads to a contradiction.
So the only possibility is that $a^+ = a^- = a$.
\end{proof}

\subsection{Differentiability at free boundary points}
Proposition \ref{p.prop di base} does not exclude that different subsequences of rescalings produce different blowup limits at the same free boundary points.
This possibility is ruled out by the next result.

\begin{proposition}\label{p.prop da sotto}
	Let $a,m \in \R$ and $m_0 > 0$, $|a|<m_0$ and $|m|\leq m_0$ and $m<a$. There exists $\eps=\eps(n, m_0, a, \lambda, L)>0$ with the following property.
	If $u$ is a solution to the thin obstacle problem in $B_1$ with an $\eps$-rescaled operator $H$ and $u$ satisfies
	\begin{equation*}
		u(x) \ge a x_{n+1} -\eps \qquad \forall x \in B^+_1,
	\end{equation*}
then  $u(x)\geq m x_{n+1}$ for every $x \in B^+_{1/2}$.
\end{proposition}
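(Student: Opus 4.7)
The plan is to mimic the barrier-and-sliding argument of Proposition~\ref{p.prop di base}, but with the quadratic barrier now pushing $u$ from below. Given $x_\ast \in B_{1/2}^+$, set $x_0 := (x_\ast', 0) \in B_{1/2}'$; since $|x_\ast - x_0| = x_{\ast,n+1} < 1/2$ and $|x_0| < 1/2$, we have $x_\ast \in B_{1/2}(x_0) \subset B_1$. Fix $\gamma_0 > 0$ depending only on $m_0$ (say $\gamma_0 = m_0 + 2$) and let $K$, $t_0$, $C$ be the constants provided by Lemma~\ref{l:soprasoluzione stretta} for these $m_0,\gamma_0$. Define
\[
\phi(x) \;:=\; -4\eps\bigl(|x'-x_0'|^2 - K x_{n+1}^2\bigr) + m\, x_{n+1}.
\]
Applying the lemma with $t = 4\eps \le t_0$ and slope $-m$ (note $|-m| \le m_0$) to the barrier ``$-\eta$'' yields, for every $0 < s \le \gamma_0$,
\[
H(\phi - s) \;\le\; -4C\eps,
\]
so $\phi - s$ is a strict subsolution of $H$ (using that an $\eps$-rescaled operator is also $4\eps$-rescaled). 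Proving $u \ge \phi$ on $\overline{B_{1/2}(x_0) \cap B_1^+}$ will be enough, since evaluation at $x_\ast = (x_0', x_{\ast,n+1})$ then gives
\[
u(x_\ast) \;\ge\; 4K\eps\, x_{\ast,n+1}^2 + m\, x_{\ast,n+1} \;\ge\; m\, x_{\ast,n+1}.
\]

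To prove $u \ge \phi$, consider
\[
A \;:=\; \bigl\{\, s > 0 \;:\; u \ge \phi - s \ \text{on}\ \overline{B_{1/2}(x_0) \cap B_1^+} \,\bigr\}.
\]
For $\eps$ small (so that $(K+1)\eps + m_0/2 \le \gamma_0$) one has $\gamma_0 \in A$: indeed the trivial bound $\phi \le K\eps + m_0/2$ on the half-ball combined with $u \ge a x_{n+1} - \eps \ge -\eps$ gives $\phi - \gamma_0 \le -\eps \le u$. Hence $\gamma := \inf A \in [0, \gamma_0]$ is well defined, and the goal is to show $\gamma = 0$. Assume for contradiction $\gamma > 0$: then there must exist a contact point $\bar x \in \overline{B_{1/2}(x_0) \cap B_1^+}$ with $u(\bar x) = \phi(\bar x) - \gamma$, and I rule out each of its possible positions in turn. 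On the spherical piece $\partial B_{1/2}(x_0) \cap \{x_{n+1} \ge 0\}$, the identity $|x'-x_0'|^2 = \tfrac14 - x_{n+1}^2$ gives
\[
u - (\phi - \gamma) \;\ge\; (a - m)\, x_{n+1} - 4(K+1)\eps\, x_{n+1}^2 + \gamma,
\]
and for $\eps \le (a-m)/(8(K+1))$ this is a non-decreasing function of $x_{n+1} \in [0, 1/2]$ with value $\gamma > 0$ at $x_{n+1} = 0$, hence strictly positive. On the flat piece $B_{1/2}'(x_0)$, $u \ge 0$ while $\phi - \gamma = -4\eps|x'-x_0'|^2 - \gamma \le -\gamma < 0$, again strictly. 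Thus $\bar x$ lies in the open upper half-ball, where $u$ satisfies $Hu = 0$ classically and $\phi - \gamma$ is a strict subsolution, and such an interior touching from below by a strict subsolution is excluded by Harnack's inequality (Corollary~\ref{c.harnack}). This forces $\gamma = 0$.

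The main subtlety I anticipate, and the reason the sliding must be carried out with a strictly positive gap $\gamma > 0$ rather than comparing $u$ with $\phi$ directly, is the flat-piece exclusion: the obstacle gives only $u \ge 0$ on $B_1'$, which does not a priori rule out a contact with $\phi$ there (both can vanish at $x_0$), but shifting the barrier down by $\gamma$ keeps $\phi - \gamma$ strictly negative throughout $B_{1/2}'(x_0)$, automatically placing it under the obstacle. The remaining steps---absorbing the various smallness conditions into the final $\eps$, the seeding of $A$, and invoking the elliptic strong maximum principle---are either routine bookkeeping or direct transcriptions of the analogous steps in Proposition~\ref{p.prop di base}.
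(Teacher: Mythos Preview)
Your proof is correct and follows essentially the same sliding-barrier argument as the paper (your $\phi$ is exactly the paper's $-\eta$, and the three exclusions for the contact point---spherical boundary, flat piece, interior via Corollary~\ref{c.harnack}---are identical). One small slip: in seeding $A$ you write $u \ge a x_{n+1} - \eps \ge -\eps$, which fails when $a<0$; replace it with the cruder bound $u \ge -m_0/2 - \eps$, which with your choice $\gamma_0 = m_0+2$ still gives $\gamma_0 \in A$ for $\eps$ small.
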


\begin{proof}
Apply Lemma \ref{l:soprasoluzione stretta} with $m_0$ and $\gamma_0 = 2$, and let $K, t_0 > 0$ be the corresponding constants.
We fix $x_0 = (x'_0,0) \in B'_{1 / 2}$ and $0 < \eps <\{1, {1 \over 4}t_0,{K^{-1}\over 4}\}$ and define
\begin{gather*}
\eta(x) = 4 \eps (|x' - x'_0|^2 - Kx_{n+1}^2) - mx_{n+1}\\
A=\left\{s>0 \, : \, u(x) > -\eta(x) - s \quad \forall x \in \overline{B^+_{1/2}}(x_0)\right\}.
\end{gather*}
We have that 
\begin{gather*}
u(x) + \eta(x) \geq a x_{n+1} - \eps -1 - m x_{n+1}
> - 2 = - \gamma_0 \qquad \forall \; x \in \overline{B_{1/2}^+}(x_0),
\end{gather*}
where we used that $a>m$. 
Therefore, $\gamma_0 \in A$; in particular, $A$ is not empty and has the form $A = (\gamma,+\infty)$ for some $\gamma \geq 0$.
We want to show that $\gamma = 0$. 
Suppose by contradiction that $0 < \gamma \le \gamma_0$. 
Arguing as for Proposition \ref{p.prop di base}
we infer that $u\geq -\eta -\gamma$ in $B^+_{1/2}(x_0)$ and 
there exists $\overline{x} \in \overline{B^+_{1/2}}(x_0)$ such that $u(\overline{x})= -\eta(\overline{x}) -\gamma$.
Note that for $\eps$ small enough we get
\begin{gather*}
\eta(x)\ge - a x_{n+1} + \eps \qquad \forall x \in (\de B_{1/2}(x_0))^+.
\end{gather*}
Indeed, for $x \in (\de B_{1/2}(x_0))^+$ we have  $|x'-x'_0|^2 = {1 \over 4} - x_{n+1}^2$, so the above inequality reduces to
\begin{gather*}
a \ge m + 4(1+K)\eps x_{n+1} \qquad \forall \; x_{n+1} \in \left(0,{1 / 2}\right],
\end{gather*}
which is true if $0<\eps\le{a-m_0 \over 2(1+K)}\leq {a-m \over 2(1+K)}$. Hence, under this assumption we conclude that
\begin{gather*}
u(x)\geq a x_{n+1}-\eps \ge -\eta(x) > -\eta(x) -\gamma \qquad \forall\;
x \in (\de B_{1/2}(x_0))^+.
\end{gather*}
We deduce that $\overline{x} \notin (\de B_{1/2}(x_0))^+$.
Moreover, $\overline{x}\not\in B_{1/2}'(x_0)$, because in this case 
$u(\overline{x}) = -\eta(\overline{x}) - \gamma \le -\gamma <0$, against the unilateral constraint. 
Thus, $\overline{x} \in B^+_{1/2}(x_0)$, which means that $Hu=0$ around $\overline{x}$. 
We reach then a contradiction by noticing that the solution $u$ is touched from below by the function $-\eta - \gamma$ which is a strict subsolution in a neighborhood of $\overline{x}$ (see Corollary \ref{c.harnack}).

Thus, we conclude that $\gamma = 0$, so that for every $x \in B^+_{1/2}(x_0)$ we have $u(x) \ge -\eta(x)$.
In particular, for every $0\le x_{n+1}\le {1 \over 2}$,
\begin{gather*}
u(x'_0,x_{n+1}) \ge -\eta(x'_0,x_{n+1})=4\eps K x_{n+1}^2 +mx_{n+1} \geq m x_{n+1}.
\end{gather*}
This conclusion being true for every $x_0'\in B'_{1/2}$ and every $0\leq x_{n+1}\leq 1/2$, we conclude the proof of the proposition.
\end{proof}

Clearly, an analogue statement of Proposition \ref{p.prop da sotto} holds in $B^-_1$.
The main consequence of the previous result is the uniqueness of blowups at any free boundary point both for the thin and the boundary obstacle problem.

\begin{proposition}\label{p.uniqueness}
Let $u$ be a Lipschitz solution to either the thin or to the boundary obstacle problem \eqref{e.var-ineq-c1}.
Then, for every $z_0\in \Gamma(u)$ there exists $a_{z_0}\in \R$ such that
the linear function $u_{z_0}(x) = a_{z_0} x_{n+1}$ is the unique blowup limit at $z_0$, i.e.
\[
u_t(x) = \frac{u(z_0+tx)}{t} \to u_{z_0}(x) \qquad \textup{uniformly in $B_1$, as $t\to 0$.}
\]
In particular, by taking $x = e_{n+1}$, we have that $u$ is differentiable at $z_0$ and $\nabla u(z_0) = (0,a_{z_0})\in \R^n\times\R$.
\end{proposition}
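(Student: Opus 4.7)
By Ascoli--Arzelà, applied using the uniform bound $\Lip(u_t) \le \Lip(u)$, any sequence of rescalings $u_{t_k}$ with $t_k\downarrow 0$ has a uniformly convergent subsequence on $B_1$. Combining Lemma \ref{l:Gamma Conv} with Corollary \ref{c.flatness} in the thin obstacle case (or directly Lemma \ref{l:Gamma Conv} in the boundary obstacle case), every such subsequential limit is a linear function $a\,x_{n+1}$ for some $a\in\R$. So it suffices to rule out the existence of two sequences of rescalings converging to different slopes.

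Suppose by contradiction that there exist $t_k,s_k\downarrow 0$ with $u_{t_k}\to a^1 x_{n+1}$ and $u_{s_k}\to a^2 x_{n+1}$ uniformly on $B_1$ and $a^1<a^2$. Fix $m\in(a^1,a^2)$ and $m_0>\max\{|a^1|,|a^2|,|m|\}$, and apply Proposition \ref{p.prop da sotto} with parameters $a:=a^2$, $m$, $m_0$ to obtain a threshold $\eps^*>0$. Recall from \eqref{operatore_k} that the rescaled operator $H_{s_k}$ is $s_k$-rescaled, hence is in particular $\eps^*$-rescaled once $s_k\le\eps^*$. For $k$ large enough both this condition and the closeness hypothesis $u_{s_k}(x)\ge a^2 x_{n+1}-\eps^*$ on $B_1^+$ hold (the latter by uniform convergence), so Proposition \ref{p.prop da sotto} yields
\[
u_{s_k}(x)\ge m\, x_{n+1}\qquad\forall\, x\in B_{1/2}^+.
\]
Undoing the rescaling reads $u(z_0+w)\ge m\, w_{n+1}$ for every $w\in B_{s_k/2}^+$. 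For any $n$ so large that $t_n\le s_k/2$ and any $x\in B_1^+$, the point $t_n x$ lies in $B_{s_k/2}^+$, hence $u_{t_n}(x)=u(z_0+t_n x)/t_n\ge m\, x_{n+1}$. Letting $n\to\infty$ and using $u_{t_n}\to a^1 x_{n+1}$ uniformly, we conclude $a^1 x_{n+1}\ge m\, x_{n+1}$ on $B_1^+$, which for $x_{n+1}>0$ contradicts $a^1<m$. Thus the limit slope is unique; in the thin obstacle case, Corollary \ref{c.flatness} ensures that the same $a_{z_0}$ governs the blowup on $B_1^-$.

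Finally, uniform convergence $u(z_0+tx)/t\to a_{z_0}x_{n+1}$ on $B_1$ is equivalent to $u(z_0+y)=a_{z_0}y_{n+1}+o(|y|)$ as $y\to 0$, which is the differentiability of $u$ at $z_0$ with gradient $(0,a_{z_0})$; the vanishing of the tangential components is also consistent with $\nabla' u(z_0)=0$ from Frehse's Theorem \ref{Frehse}. \textbf{Main obstacle.} The delicate point is the scale-propagation step: a lower-bound estimate produced by Proposition \ref{p.prop da sotto} at one scale $s_k$ must, after unwinding the rescaling, yield lower bounds at all smaller scales $t\le s_k/2$. One must also verify that the constants $(m_0,\eps^*,L,\lambda)$ can be chosen once and for all so that the hypotheses of Proposition \ref{p.prop da sotto} hold uniformly along the whole sequence of rescaled problems.
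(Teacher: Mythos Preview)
Your proof is correct and follows essentially the same approach as the paper: assume two subsequential blowup slopes $a^{(1)}\neq a^{(2)}$, apply Proposition \ref{p.prop da sotto} to the rescalings converging to the larger slope to obtain a lower barrier $m\,x_{n+1}$ valid on a small ball around $z_0$, and derive a contradiction with the other blowup sequence. Your write-up is slightly more explicit than the paper's about the scale-propagation step and the passage from uniform convergence of $u_t$ to differentiability of $u$ at $z_0$, but the argument is the same.
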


\begin{proof}
Suppose by contradiction that there are two different sequences $t^{(i)}_k \downarrow 0$, $i=1,2$ such that the limit of the rescalings 
\[
u^{(i)}_k(x) = \frac{u(z_0+t^{(i)}_kx)}{t^{(i)}_k}
\]
are the functions $a^{(i)} x_{n+1}$ with $a^{(2)}<a^{(1)}$.
We choose $m$ such that $a^{(2)}<m<a^{(1)}$.
Since the rescalings $u^{(1)}_k$ solve the thin obstacle problem with $t^{(1)}_k$-rescaled operators, for $k$ big enough we have that the hypotheses of 
Proposition \ref{p.prop da sotto} are satisfied (with parameters $m_0= \Lip(u)$,
$a = a^{(1)}$, $m$) and therefore we get that $u^{(1)}_k(x)\geq m^{}x_{n+1}$ for every $x \in B^+_{1/2}$ for $k$ large enough, which means that
\begin{gather*}
u(x) \geq m x_{n+1} \qquad \forall x \in B^+_{s}(z_0),
\end{gather*}
for a suitable $s>0$.
This is a contradiction to the fact that 
\[
m x_{n+1} \leq u^{(2)}_k(x) = \frac{u(z_0+t^{(2)}_kx)}{t^{(2)}_k} \to a^{(2)}x_{n+1}<m x_{n+1} \quad \forall \;x_{n+1}> 0.
\]
\end{proof}

\subsection{On value of the normal derivative on the free boundary}
We show that the gradient of the solutions at free boundary points is prescribed by the Signorini boundary condition $F_{n+1}(x_0,0, \nabla u(x_0))=0$.

To this aim we start with the following lemma.

\begin{lemma}\label{l.prop di base - BOP}
	Suppose $m > a$. There exists $\eps = \eps(n, a, m, \lambda, L)>0$ such that, if $u$ is a solution to the boundary obstacle problem in $B_1^+$ with $\eps$-rescaled operator $H$, such that
	\begin{gather*}
		u(x) \le ax_{n+1} + \eps \qquad \forall \; x \in B_1^+, \\
		\de_{n+1} u(x) > m  \qquad \forall \; x \in B_1' \setminus \Lambda(u),
	\end{gather*}
then $B'_{1 / 2} \subset \Lambda(u)$.
\end{lemma}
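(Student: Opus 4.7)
The plan is to reprise the barrier-sliding scheme of Proposition \ref{p.prop di base} in the boundary obstacle setting on the half-ball $B_{1/2}^+(x_0)$, the key novelty being the treatment of contact on the flat non-coincidence set, which is excluded via the strict lower bound on $\partial_{n+1} u$ in the hypothesis. Concretely, fix $x_0 = (x_0', 0) \in B'_{1/2}$, apply Lemma \ref{l:soprasoluzione stretta} with $m_0 := |m| + |a| + 1$ and, say, $\gamma_0 = 2 + 2 m_0$ to obtain constants $K, t_0, C > 0$, and choose $0 < \eps < \min\bigl\{1,\, t_0/4,\, (m-a)/(2(1+K))\bigr\}$. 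Define the barrier
\begin{gather*}
\eta(x) = 4 \eps \bigl(|x' - x_0'|^2 - K x_{n+1}^2\bigr) + m\, x_{n+1},
\end{gather*}
and the sliding set
\begin{gather*}
A = \bigl\{ s > 0 \,:\, u(x) < \eta(x) + s \text{ for all } x \in \overline{B_{1/2}^+(x_0)} \bigr\}.
\end{gather*}
Since both $u$ and $\eta$ are bounded, $A$ is a non-empty open half-line $(\gamma, +\infty)$, and the goal is to prove $\gamma = 0$.

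Assume by contradiction that $\gamma > 0$, so that there exists a contact point $\bar x \in \overline{B_{1/2}^+(x_0)}$ with $u(\bar x) = \eta(\bar x) + \gamma$. The four possible locations of $\bar x$ are excluded as follows. On the curved portion $(\partial B_{1/2}(x_0))^+$, using $|x' - x_0'|^2 = \tfrac14 - x_{n+1}^2$, the choice $\eps \leq (m-a)/(2(1+K))$ gives $\eta(x) \geq a x_{n+1} + \eps \geq u(x)$, so strict inequality holds and contact here is impossible. On the coincidence set $B_{1/2}'(x_0) \cap \Lambda(u)$, $u(\bar x) = 0$ while $\eta(\bar x) + \gamma = 4 \eps |\bar x' - x_0'|^2 + \gamma \geq \gamma > 0$, a contradiction. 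In the interior $B_{1/2}^+(x_0)$, $u$ satisfies $H u = 0$ locally while, by Lemma \ref{l:soprasoluzione stretta}, $\eta + \gamma$ is a strict supersolution; the Harnack principle (Corollary \ref{c.harnack}) precludes $u$ being touched from above by a strict supersolution at an interior point.

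The genuinely new case is $\bar x = (\bar x', 0) \in B_{1/2}'(x_0) \setminus \Lambda(u)$: the function $u - \eta - \gamma \leq 0$ attains its maximum $0$ at $\bar x$, and $u$ is classically differentiable at $\bar x$ (from interior regularity in the non-coincidence set combined with boundary regularity stemming from the natural Neumann-type condition $F_{n+1}(x,u,\nabla u) = 0$). Computing the one-sided derivative in $x_{n+1}$ from the upper half-space gives $\partial_{n+1}(u - \eta - \gamma)(\bar x) \leq 0$, i.e., $\partial_{n+1} u(\bar x) \leq \partial_{n+1} \eta(\bar x) = m$, contradicting the hypothesis $\partial_{n+1} u(\bar x) > m$. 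Hence $\gamma = 0$, and $u(x_0) \leq \eta(x_0) = 0$; since $u \geq 0$ on $B_1'$ we conclude $u(x_0) = 0$, so $x_0 \in \Lambda(u)$. As $x_0 \in B_{1/2}'$ was arbitrary, $B_{1/2}' \subset \Lambda(u)$.

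The principal obstacle is precisely this last case: it hinges both on the \emph{strictness} of the hypothesis on $\partial_{n+1} u$ (matched against the non-strict inequality from the max condition) and on the classical differentiability of $u$ at boundary points of the non-coincidence set, which is where the boundary obstacle structure materially enters. The remainder is a transcription of Proposition \ref{p.prop di base} to the half-ball geometry.
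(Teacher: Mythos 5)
Your proof is correct and follows essentially the same approach as the paper: the barrier-sliding scheme from Proposition \ref{p.prop di base}, with the curved boundary, interior, and coincidence-set cases excluded as before, and the genuinely new flat-boundary case $\bar x \in B'_{1/2}(x_0)\setminus\Lambda(u)$ excluded via the first-order condition $\partial_{n+1}(u-\eta-\gamma)(\bar x)\le 0$ against the hypothesis $\partial_{n+1}u > m$. The minor differences in constants ($m_0$, $\gamma_0$) are immaterial.
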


\begin{proof}
	Let $m_0 = \max \{|a|, |m|\}$ and $\gamma_0 = 3 + 2 m_0$ in 
	Lemma \ref{l:soprasoluzione stretta} and let $K$ and $t_0 > 0$ be the corresponding constants.
	Fix $x_0 = (x'_0,0) \in B'_{1 / 2}$ and $0 < \eps \le \min\{1, {1 \over 4}t_0\}$ and define
	\begin{gather*}
		\eta(x) = 4 \eps (|x' - x'_0|^2 - Kx_{n+1}^2) + mx_{n+1},\\
		A = \{s>0 \, : \, u(x) < \eta(x) + s \quad \forall \; x \in \overline{B_{1/2}^+(x_0)}\}.
	\end{gather*}
It is immediate to verify that $\gamma_0\in A$ because we have that
	\begin{gather*}
		u(x) - \eta(x) \le a x_{n+1} + \eps + 1 + m_0
		\le 2 + 2 m_0 < \gamma_0 \qquad \forall \; x \in \overline{B_{1/2}^+}(x_0).
	\end{gather*}
We show that $\gamma:=\inf A=0$.
Suppose by contradiction that $0 < \gamma \le \gamma_0$.
	By definition, $u\leq \eta + \gamma$ and there exists $\bar x \in \overline{B_{1/2}^+}(x_0)$ such that $u(\bar x)= \eta(\bar x) + \gamma$.
	If $\eps<\frac{m-a}{2(1+K)}$, then it is simple to verify that
\begin{gather}
\eta(x)\ge a x_{n+1} + \eps \qquad \forall \; x \in (\de B_{1/2}(x_0))^+.
\end{gather}
Therefore $\bar x \notin (\de B_{1/2}(x_0))^+$.
On the other hand, if $\bar x \in B'_{1/2}(x_0)$, then $u(\bar x) = \eta(\bar x) + \gamma \ge \gamma >0$. Thus, $\bar x \in B_1' \setminus \Lambda(u)$: i.e.,
	\begin{gather*}
		u(x) \le \eta(x) + \gamma
		\qquad
		\forall \; x \in \overline{B_{1/2}^+(x_0)},
		\qquad \qquad
		u(\bar x) = \eta(\bar x) + \gamma.
	\end{gather*}
It follows then that necessarily $\de_{n+1} u(\bar x) \le \de_{n+1} \eta(\bar x) = m$, which contradicts our hypotheses.
Finally, if $\bar x \in B_{1/2}^+(x_0)$, than we contradicts Harnack's inequality (see Corollary \ref{c.harnack}) because $Hu=0$ in a neighborhood of $\bar x$ and $u$ is touched from above by a strict supersolution $\eta + \gamma$.
	
Thus, we conclude that $\gamma = 0$ and therefore $u(x_0)=0$ for all $x_0\in B_{1/2}'$.
\end{proof}

As a consequence we deduce that the co-normal derivative must vanish at free boundary points.

\begin{proposition}\label{p.uniqueness Signorini}
	Let $u$ be a Lipschitz solution to the boundary obstacle problem \eqref{e.var-ineq-c1}.
	Then, for every $z_0\in \Gamma(u)$ we have that $F_{n+1}(z_0, 0, 0, \de_{n+1}u(z_0)) = 0$.
\end{proposition}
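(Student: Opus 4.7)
The plan is to combine Lemma~\ref{l.prop di base - BOP} with the unique blowup at $z_0$ furnished by Proposition~\ref{p.uniqueness}. That proposition produces a number $a\in\R$ such that $u_t(x)=u(z_0+tx)/t\to a\,x_{n+1}$ uniformly in $B_1$ and identifies $a=\de_{n+1}u(z_0)$; Lemma~\ref{l:Gamma Conv} moreover gives $F_{n+1}(z_0,0,0,a)\le 0$. Consequently the whole proposition reduces to excluding the strict inequality $F_{n+1}(z_0,0,0,a)<0$, which I would do by contradiction.

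Suppose $F_{n+1}(z_0,0,0,a)<0$. By hypothesis~\ref{ellip} applied with $\xi=e_{n+1}$ we have $\de_{p_{n+1}}F_{n+1}\ge\lambda>0$, so $p_{n+1}\mapsto F_{n+1}(z_0,0,0,p_{n+1})$ is strictly increasing and continuity lets me fix $m>a$ with $F_{n+1}(z_0,0,0,m)<0$ still. Let $\eps>0$ be the constant produced by Lemma~\ref{l.prop di base - BOP} for the parameters $(a,m)$. I would then form the rescalings $u_k(x)=u(z_0+t_kx)/t_k$ with $t_k\downarrow 0$: their operators $H_k$ are $t_k$-rescaled (so $\eps$-rescaled for $k$ large), and the uniform convergence $u_k\to a\,x_{n+1}$ from Proposition~\ref{p.uniqueness} gives $u_k\le a\,x_{n+1}+\eps$ in $B_1^+$ for $k$ large.

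The only remaining hypothesis of Lemma~\ref{l.prop di base - BOP}, namely $\de_{n+1}u_k>m$ on $B_1'\setminus\Lambda(u_k)$, is where the heart of the argument lies. On this non-coincidence portion of the flat boundary the variational inequality reduces to the natural Neumann condition $F_{n+1}(z_0+t_kx,t_ku_k,\nabla u_k)=0$. Using Theorem~\ref{Frehse} and $\nabla' u(z_0)=0$, the tangential gradients satisfy $\|\nabla' u_k\|_\infty\le\omega(t_k)\to 0$; combining this with $t_k\|u_k\|_\infty\to 0$ and the continuity of $F_{n+1}$ yields $F_{n+1}(z_0+t_kx,t_ku_k(x),\nabla' u_k(x),m)<0$ for $k$ large, and strict monotonicity in $p_{n+1}$ then promotes the Neumann identity into $\de_{n+1}u_k>m$ on the whole of $B_1'\setminus\Lambda(u_k)$.

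Once this is in place, Lemma~\ref{l.prop di base - BOP} applied to $u_k$ delivers $B_{1/2}'\subset\Lambda(u_k)$. But $z_0\in\Gamma(u)$ forces $0\in\Gamma(u_k)$, which is incompatible with $0$ lying in the relative interior of the coincidence set of $u_k$; this contradiction rules out the strict inequality and completes the proof. The main obstacle I anticipate is the gradient verification above: Lemma~\ref{l.prop di base - BOP} demands a \emph{pointwise strict} lower bound on $\de_{n+1}u_k$ along the non-coincidence set, and obtaining this uniformly in $k$ relies crucially on both strict ellipticity (to turn the sign of $F_{n+1}$ into the sign of $p_{n+1}-m$) and on Frehse's continuity of the tangential derivatives (to localize the arguments of $F_{n+1}$ around $(z_0,0,0,m)$).
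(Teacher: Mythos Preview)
Your proof is correct and follows essentially the same approach as the paper's: both assume $F_{n+1}(z_0,0,0,a)<0$, pick $m>a$ with $F_{n+1}(z_0,0,0,m)<0$, use Frehse's tangential regularity together with the Neumann condition on the non-coincidence set to verify the hypotheses of Lemma~\ref{l.prop di base - BOP} on a fine rescaling, and obtain $B_{1/2}'\subset\Lambda(u_k)$, contradicting $0\in\Gamma(u_k)$. The only cosmetic difference is in how you extract $\de_{n+1}u_k>m$: you keep the remaining arguments of $F_{n+1}$ fixed at $(z_0+t_kx,\,t_ku_k,\,\nabla'u_k)$ and apply monotonicity in $p_{n+1}$ directly, while the paper freezes them at $(z_0,0,0)$ and bounds the discrepancy---both work equally well.
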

\begin{proof}
	Let $a = \de_{n+1} u(z_0)$. By Lemma \ref{l:Gamma Conv} we know that $F_{n+1}(z_0, 0, 0, a)\le 0$.
	Assume by contradiction that $\tau := {1 \over 2} F_{n+1}(z_0, 0, 0, a) < 0$ and fix any constant $m>a$ such that 
	$F_{n+1}(z_0, 0, 0, m) < \tau$. We can find such $m$ since $F_{n+1}(z_0, 0, 0, \cdot)$ is strictly monotone increasing and continuous. 

We consider $\eps > 0$ given by Lemma \ref{l.prop di base - BOP}. We want to apply that result to $u_k(x) = k u(z_0 + x / k)$ for $k \ge 1$ big enough.
To this aim, given $x \in B_1' \setminus \Lambda(u_k)$, we note that $z_0 + x / k \in B_1' \setminus \Lambda(u)$. Since $u$ is $C^1$ around $z_0 + x / k$, by the Signorini boundary conditions we have that
	\begin{gather*}
		F_{n+1}(z_0 + x / k, u_k(x) / k, \nabla u_k(x))=0
	\end{gather*}
Therefore, for every $x \in B_1' \setminus \Lambda(u_k)$ we have that
\begin{align*}
 - F_{n+1}(z_0, 0, 0, \de_{n+1} u_k(x)) &\\
&		= F_{n+1}(z_0 + x / k, u_k(x) / k, \nabla u_k(x))
		- F_{n+1}(z_0, 0, 0, \de_{n+1} u_k(x)) \\
		& \le \|\nabla F_{n+1}\|_\infty
		\left({1 + \|u_k\|_\infty \over k} + \|\nabla' u_k\|_\infty \right)
		=o(1) \qquad \mbox{\rm for } k \to \infty,
	\end{align*}
where we use Frehse's Theorem \ref{Frehse} to deduce that 	\begin{gather*}
		|\nabla' u_k(x)| = |\nabla' u(z_0 + x / k) - \nabla' u(z_0)| \le \omega(1/k),
	\end{gather*}
and $\sup_k\|u_k\|_\infty <\infty$.
We can therefore choose $k$ big enough to ensure that $\tau \le -\omega(1/k)$. So that
\begin{align*}
		F_{n+1}(z_0, 0, 0, m)
		< \tau \le -\omega(1/k)
		\le F_{n+1}(z_0, 0, 0, \de_{n+1} u_k(x))
		\; \; \Longrightarrow \; \;
		m < \de_{n+1} u_k(x).
	\end{align*}
Thus we have proven that $m < \de_{n+1} u_k(x)$ for every $x \in B_1' \setminus \Lambda(u_k)$, if $k$ is big enough. Moreover, 
since $\nabla u(z_0) = (0,a)$, we have that
$u_k(x) \le ax_{n+1} + \eps$ for every $x \in B_1^+$ and large enough $k$. Therefore, since $u_k$ is a solution to the boundary obstacle problem with $\eps$-rescaled operator $H_k$ (if $k^{-1}<\eps$), we can apply Lemma \ref{l.prop di base - BOP} to get that $B_{1/2}' \subset \Lambda(u_k)$, against the assumption that $z_0 \in \Gamma(u)$.
\end{proof}

\subsection{Continuity of the normal derivative}
Building upon the previous results, we are ready to prove the continuity of the derivatives stated in Theorem \ref{t.c1}.

We start with the following proposition.

\begin{proposition}\label{p.blowup}
	Let $u$ be a Lipschitz solution to either the thin or the boundary 
	obstacle problem \eqref{e.var-ineq-c1}.
	If $\{z_k\} \subset \Gamma(u)$, $\{t_k\} \subset \R$ such that $t_k \to 0$, $z_k \to z_0 \in \Gamma(u)$, $0<t_k < 1 - |z_k|$, then 
	\[
	u_k(x) = {u(z_k + t_k x) \over t_k} \to \de_{n+1}u(z_0) x_{n+1} \qquad\textup{uniformly on $B_1$}.
	\]
\end{proposition}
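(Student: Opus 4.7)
The plan is a compactness-plus-contradiction argument that reduces the diagonal blowup at $(z_k,t_k)$ to the fixed-center blowup already analyzed in Proposition~\ref{p.uniqueness}. First I would observe that the $u_k$ are uniformly Lipschitz with $\Lip(u_k)=\Lip(u)$ and satisfy $u_k(0)=0$, since $z_k\in\Gamma(u)\subset\Lambda(u)$. By Ascoli--Arzel\`a every subsequence admits a further subsequence converging uniformly to a Lipschitz $u_\infty$, and it suffices to prove that any such limit equals $a_{z_0}\,x_{n+1}$, where $a_{z_0}:=\partial_{n+1}u(z_0)$ is defined by Proposition~\ref{p.uniqueness}. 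Lemma~\ref{l:Gamma Conv} applies verbatim with $z_k\to z_0$ in place of $z_k\equiv z_0$, since its use of Frehse's theorem only requires $\nabla'u(z_k)=0$, which holds because $z_k\in\Gamma(u)$; in the thin obstacle case I would further invoke Corollary~\ref{c.flatness} applied to $u_k$ itself (noting that $0\in\Gamma(u_k)$ is scale- and translation-invariant) to force the wedge limit to degenerate to a line. In both problems we conclude $u_\infty(x)=a\,x_{n+1}$ for some $a\in\R$.

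The core step is to identify $a$ with $a_{z_0}$. I argue by contradiction and assume $a<a_{z_0}$ (the reverse case is discussed below). Pick $m$ with $a<m<a_{z_0}$, and let $\eps>0$ be the constant furnished by Proposition~\ref{p.prop da sotto} for the parameters $a=a_{z_0}$, $m_0=\Lip(u)$ and this $m$. By Proposition~\ref{p.uniqueness} applied at $z_0$, the rescalings $\tilde u_s(x):=u(z_0+sx)/s$ converge uniformly on $B_1$ to $a_{z_0}x_{n+1}$ as $s\downarrow 0$; hence, for all sufficiently small $s\le\eps$, the function $\tilde u_s$ solves the rescaled thin (or boundary) obstacle problem with $\eps$-rescaled operator and satisfies $\tilde u_s(x)\ge a_{z_0}x_{n+1}-\eps$ on $B_1^+$. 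Proposition~\ref{p.prop da sotto} then yields $\tilde u_s(x)\ge m\,x_{n+1}$ on $B_{1/2}^+$, which unravels to the pointwise estimate
\[
u(y)\ge m\,(y-z_0)_{n+1}\qquad\forall\,y\in B_{s/2}^+(z_0),
\]
using $(z_0)_{n+1}=0$. I fix this scale $s=s^\ast$ once and for all.

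For $k$ so large that $|z_k-z_0|+t_k<s^\ast/2$, every point $y=z_k+t_kx$ with $x\in B_1^+$ satisfies $|y-z_0|\le|z_k-z_0|+t_k<s^\ast/2$ and $y_{n+1}=t_k x_{n+1}>0$ (using $(z_k)_{n+1}=0$), and hence lies in $B_{s^\ast/2}^+(z_0)$; dividing the previous bound by $t_k$ yields $u_k(x)\ge m\,x_{n+1}$ on $B_1^+$. Passing to the limit along the chosen subsequence gives $a\,x_{n+1}\ge m\,x_{n+1}$ on $\{x_{n+1}>0\}$, contradicting $a<m$. For the opposite inequality $a>a_{z_0}$, the boundary obstacle case is automatic: Lemma~\ref{l:Gamma Conv} combined with Proposition~\ref{p.uniqueness Signorini} gives $F_{n+1}(z_0,0,0,a)\le 0=F_{n+1}(z_0,0,0,a_{z_0})$ and the strict monotonicity of $F_{n+1}$ in $p_{n+1}$ from~\ref{ellip} forces $a\le a_{z_0}$; in the thin obstacle case I would invoke the $B_1^-$ analogue of Proposition~\ref{p.prop da sotto} noted in the text to produce $u(y)\le m(y-z_0)_{n+1}$ on $B_{s^\ast/2}^-(z_0)$ for any $m\in(a_{z_0},a)$ and contradict the convergence on $\{x_{n+1}<0\}$ in the same way. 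The main delicacy is the interplay of the three small parameters $\eps$, $s^\ast$ and $(|z_k-z_0|,t_k)$: crucially, $s^\ast$ is chosen from $a_{z_0}$, $m$ and the structural constants alone, so the final step tolerates \emph{any} rate at which $z_k\to z_0$ relative to $t_k\downarrow 0$, which is exactly what makes the diagonal blowup reduce cleanly to the fixed-point one.
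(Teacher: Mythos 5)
Your overall strategy matches the paper's: compactness reduces to identifying the slope $a$ of the limit $u_\infty=a\,x_{n+1}$ with $a_{z_0}=\partial_{n+1}u(z_0)$, and you do this by applying Proposition~\ref{p.prop da sotto} at a fixed scale $s^\ast$ centered at $z_0$ (justified by the uniqueness of the blowup there) and then transferring the resulting one-sided bound $u(y)\ge m(y-z_0)_{n+1}$ to the moving rescalings $u_k$ once $|z_k-z_0|+t_k<s^\ast/2$. This is precisely the paper's mechanism, and your parameter bookkeeping and the boundary-obstacle subcase (via Lemma~\ref{l:Gamma Conv}, Proposition~\ref{p.uniqueness Signorini} and strict monotonicity of $F_{n+1}$) are correct.

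There is, however, a sign error in the thin-obstacle treatment of the case $a>a_{z_0}$. The $B_1^-$ analogue of Proposition~\ref{p.prop da sotto}, obtained by reflecting $x_{n+1}\mapsto -x_{n+1}$, is again a \emph{lower} bound: from $\tilde u_s(x)\ge a_{z_0}\,x_{n+1}-\eps$ on $B_1^-$ and $m>a_{z_0}$ one concludes $\tilde u_s(x)\ge m\,x_{n+1}$ on $B_{1/2}^-$ (on $B^-$, where $x_{n+1}<0$, the line $m\,x_{n+1}$ lies below $a_{z_0}\,x_{n+1}$). No upper-bound barrier analogue is available at this generality, because the only information on $B_1'$ is the one-sided constraint $u\ge 0$, which only forbids touching from below. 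You instead assert $u(y)\le m(y-z_0)_{n+1}$ on $B^-_{s^\ast/2}(z_0)$ with $m\in(a_{z_0},a)$; passing that through $u_k$ and letting $k\to\infty$ gives $a\,x_{n+1}\le m\,x_{n+1}$ on $\{x_{n+1}<0\}$, i.e.\ $a\ge m$, which is consistent with $m<a$ and produces no contradiction. With the correct inequality $u(y)\ge m(y-z_0)_{n+1}$ on $B^-_{s^\ast/2}(z_0)$, the same transfer yields $u_k\ge m\,x_{n+1}$ on $B_1^-$ and hence $a\le m<a$, a contradiction; this is exactly what the paper encodes by proving $u_t\ge w_\delta$ on both half-balls with $w_\delta(x_{n+1})=(a_{z_0}+\delta)x_{n+1}$ for $x_{n+1}\le 0$.
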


\begin{proof}
	By Lemma \ref{l:Gamma Conv} and Corollary \ref{c.flatness}, up to a subsequence we have that $u_k(x) \to a x_{n+1}$ uniformly on $B_1$, for some $a \in \R$.
	We consider separately the two obstacle problems.
	
	\smallskip
	
	\textit{Thin obstacles}.
	For every $\delta>0$ we define
	\begin{gather*}
		w_\delta(x_{n+1}) =	\begin{cases}
			(\de_{n+1}u(z_0) -\delta) x_{n+1} & x_{n+1}\ge 0, \\
			(\de_{n+1}u(z_0) +\delta) x_{n+1} & x_{n+1}\le 0.
		\end{cases}
	\end{gather*}
	We then consider $\eps>0$ in Proposition \ref{p.prop da sotto} with $\de_{n+1}u(z_0)$ in place of $a$ and $\de_{n+1}u(z_0)-\delta$ in place of $m$.
	By the uniqueness of blowups in Proposition \ref{p.uniqueness}, there exists $0 < t_\delta < \min\{1-|z_0|, \eps\}$ such that
	\begin{gather*}
		\left| u_{t}(x) - \de_{n+1}u(z_0)x_{n+1} \right| \le \eps \qquad \forall x \in B_1,\qquad \forall\; t<t_\delta,
	\end{gather*}
	where
	\begin{gather*}
		u_{t}(x)={u(z_0 + t x) \over t}.
	\end{gather*}
	By applying Proposition \ref{p.prop da sotto} to both sides of the ball, for small values of $t$ we get that $u_{t}(x) \ge w_\delta(x_{n+1})$ for every $x \in B_{1/2}$. So there exists a small radius $r_\delta>0$ such that $u(x) \ge w_\delta(x_{n+1})$ for every $x \in B_{r_\delta}(z_0)$.
	
	Now let $z_k$, $t_k$ and $u_k$ as in the statement. We want to show that $a = \de_{n+1}u(z_0)$. For $k\ge1$ big enough, indeed, we have $|z_k-z_0|\le r_\delta/2$ and $t_k \le r_\delta/2$, so that $B_{t_k}(z_k) \subset B_{r_\delta}(z_0)$. Thus, $u_k(x) \ge w_\delta(x_{n+1})$ for every $x \in B_1$. Letting $k\to+\infty$, we then get $|a-\de_{n+1}u(z_0)|\le \delta$. Since $\delta$ was arbitrary small, the proof is complete.
	
	\smallskip
	
	\textit{Boundary obstacles}.
We show that $a = \de_{n+1} u(z_0)$. Indeed, if $a > \de_{n+1} u(z_0)$, then by Proposition \ref{p.uniqueness Signorini} we have that
	\begin{gather*}
		F_{n+1}(z_0, 0, 0, a) > F_{n+1}(z_0, 0, 0, \de_{n+1} u(z_0)) = 0,
	\end{gather*}
which is a contradiction to Lemma \ref{l:Gamma Conv}.
If instead $a < \de_{n+1} u(z_0)$, let
	\begin{gather*}
		\hat u_k (x) = k u(z_0 + x / k).
	\end{gather*}
By Proposition \ref{p.uniqueness} we know that $\hat u_k(x) \to \de_{n+1} u(z_0) x_{n+1}$ uniformly and 
by Proposition \ref{p.prop da sotto} for every $a < m < \de_{n+1} u(z_0)$ we have that $\hat u_k(x) \ge m x_{n+1}$ for every $x \in B_{1/2}^+$ if $k$ is big enough. So there exists a small radius $r > 0$ such that $u(x) \ge m x_{n+1}$ for every $x \in B_r^+(z_0)$.
If $k$ is big enough, then $|z_k-z_0|\le r/2$ and $t_k \le r/2$, so that $B^+_{t_k}(z_k) \subset B^+_r(z_0)$. Thus, $u_k(x) \ge m x_{n+1}$ for every $x \in B^+_1$. Letting $k \to + \infty$, we then get $a \ge m$, which is a contradiction.
\end{proof}

\subsection*{Proof of Theorem \ref{t.c1}}
We consider separately the two obstacle problems.

\smallskip

\textit{Thin obstacles.}
We start observing that $u$ is $C^{1,\alpha}$ regular around points of $B^+_1 \cup B'_1 \setminus \Lambda(u)$ for every $0<\alpha<1$, since it solves a quasi-linear elliptic equation with $C^1$-regular operator. If $z_0$ is an interior point of $\Lambda(u)$ with respect to the relative topology of $B'_1$, then we can find $r>0$ small such that $B'_r(z_0) \subset \Lambda(u)$. In this case, $u$ is a solution to the Dirichlet problem in $B^+_r(z_0)$ with null boundary datum on the flat portion of the half-ball. Due to a result of Giaquinta and Giusti \cite{GiaquintaGiusti84} (see Appendix \ref{a.regolarità}), $u$ is then $C^{1,\alpha}$ around $z_0$, for every $0<\alpha<1$. It is left to prove that $u$ is $C^1$ around points of $\Gamma(u)$.

Let $z_0 \in \Gamma(u)$ and $\{y_k\}_{k \ge 1} \subset B^+_1 \cup B'_1$ be such that $y_k \to z_0$. Without loss of generality, we may assume that the whole sequence $\{y_k\}_{k \ge 1}$ is contained either in $\Gamma(u)$ or outside $\Gamma(u)$.
	
\smallskip 
	
\textit{Case $\{y_k\}_{k \ge 1} \subset B^+_1 \cup B'_1 \setminus \Gamma(u)$.} 
For every $k \ge 1$ we choose $z_k \in \Gamma(u)$ such that $t_k := \dist(y_k, \Gamma(u)) = |z_k - y_k|>0$. Set
\begin{gather*}
	\tau_k = 2t_k, \quad p_k = {y_k - z_k \over \tau_k}, \quad B_k = B_{t_k}(y_k), \quad C_k = B^+_{t_k / 2}(y_k), \quad D=B^+_{1 \over 8}(p).
\end{gather*}

Without loss of generality and upon extracting a subsequence, we may assume
\begin{gather*}
	\tau_k < 1-|z_k|, \quad p_k \to p\in \de B_{1 \over 2}, \quad |p_k-p|<{1 \over 8}.
\end{gather*}

Next, we set
\begin{gather*}
	u_k(x) = {u(z_k + \tau_k x) \over \tau_k} \qquad \forall x \in B_1.
\end{gather*}

For every fixed $k \ge 1$, either $B_k \cap \Lambda(u) = \emptyset$ or $B'_k \subset \Lambda(u)$ (depending on whether or not $y_k$ belongs to the interior of the coincidence set of $u$).

In both cases (using either De Giorgi's Theorem \cite{DeGiorgi57} or the already quoted result of Giaquinta and Giusti \cite{GiaquintaGiusti84}, see Appendix \ref{a.regolarità}), we conclude that $u\in C^{1,\alpha}(C_k)$ with uniform bounds, so that there exists a constant $C>0$ such that
\begin{gather*}
	|\de_{n+1} u_k(x) - \de_{n+1} u_k(y)|
	\le C \tau_k^\alpha |x-y|^\alpha \quad \forall x,y \in D.
\end{gather*}

Considering the uniform boundedness $\|\de_{n+1} u_k \|_{L^\infty(D)} \leq \|\de_{n+1} u \|_{L^\infty(B_1)}$, we can apply Ascoli-Arzelà's Theorem to deduce that $\de_{n+1} u_k$ converges (upon extracting a subsequence) uniformly in $D$.
Moreover, since by Proposition \ref{p.blowup}, we have that $u_k(x) \to \de_{n+1}u(z_0) x_{n+1}$ uniformly on $B_1$, necessarily it must hold that 
$\de_{n+1} u_k\to \de_{n+1}u(z_0)$.

We can repeat the same argument for the negative part of the balls to get that $\de_{n+1} u_k \to \de_{n+1}u(z_0)$ (up to further subsequences) in the whole ball $B_{1 \over 8}(p)$. Moreover, since the limit is independent of the subsequence, the entire sequence $u_k$ satisfies the same conclusion.
In particular, $|p_k - p|<{1 \over 8}$ implies that
\begin{gather*}
	|\de_{n+1} u_k (p_k) - \de_{n+1}u(z_0)| \le \|\de_{n+1} u_k - \de_{n+1}u(z_0)\|_\infty \to 0,
\end{gather*}
thus proving that $\de_{n+1} u(y_k) = \de_{n+1} u (z_k + \tau_k p_k) = \de_{n+1} u_k (p_k) \to \de_{n+1}u(z_0)$.

\medskip

\textit{Case $\{y_k\}_{k \ge 1} \subset \Gamma(u)$.}
By Proposition \ref{p.uniqueness} we have that $u$ is differentiable at $y_k$, hence there exists $0<t_k<1-|y_k|$ such that
\begin{gather*}
	\left| {u(y_k + t_k e_{n+1}) \over t_k} - \de_{n+1} u(y_k) \right| \le {1 \over k}.
\end{gather*}
However by Proposition \ref{p.blowup} we have that
\begin{gather*}
	{u(y_k + t_k e_{n+1}) \over t_k} \to \de_{n+1}u(z_0).
\end{gather*}
We conclude that $\de_{n+1} u(y_k) \to \de_{n+1} u(z_0)$,
thus completing the proof of the continuity of the normal derivative of the solution to the thin obstacle problems.

\medskip

\textit{Boundary obstacles}.
We start noticing that at any $z_0\in \Gamma(u)$ by the ellipticity hypothesis \ref{ellip} the function
\begin{gather*}
	\phi(t)= F_{n+1}(z_0,0,te_{n+1})
\end{gather*}
is monotone increasing and $\phi'(t) = \partial_{p_{n+1}}F_{n+1}(z_0,0,te_{n+1})\geq \lambda(t)$, the constant $\lambda$ being uniformly positive for $t$ in any compact set.
Moreover, using the result by Lieberman \cite{Lieberman88} for the regularity of Neumann's problem (see Appendix \ref{a.regolarità}), we have that $u$ is $C^{1,\alpha}(B_1^+\cup B_1'\setminus \Lambda(u))$ and it follows from the variational inequality \ref{e.var-ineq-c1} that
\begin{gather*}
	F_{n+1}((x',0), u(x',0), \nabla u (x',0)) = 0\qquad \forall\;(x',0)\in B_1'\setminus \Lambda(u).
\end{gather*}
Therefore, if $y_k \in B_1'\setminus\Lambda(u)$ with $y_k\to z_0$, then $\nabla' u(y_k)\to 0$ by Frehse result and
\[
F_{n+1}(y_k,u(y_k),\nabla u(y_k))=0;
\]
hence, for any converging subsequence $\nabla u(y_{k_j})\to (0,a)\in \R^n\times \R$, we have that
\[
F_{n+1}(z_0,0,(0,a))=0.
\]
By the strict monotonicity of $\phi(t)= F_{n+1}(z_0,0,te_{n+1})$ 
we must have $\partial_{n+1}u(z_0)=a$, i.e., $\partial_{n+1}u(y_{k_j})\to 0$.
On the other hand, if $y_k \in \Lambda(u)$ with $y_k\to z_0$, we can argue as for the thin obstacle problem, inferring that $\de_{n+1} u(y_k) \to \de_{n+1} u(z_0)$.
This proves the continuity of the normal derivative at any free boundary point.
\qed

\section{$C^{1,\alpha}$ Regularity}

In this section we prove the $C^{1,\alpha}$ regularity for the solutions to the variational inequalities with boundary obstacles \eqref{e.var-ineq-c1}, which we
recall here for readers' convenience:
\begin{gather}\label{e.var-ineq-c2}
\int_{B_1^+} \langle F(x,u,\nabla u),\nabla v - \nabla u \rangle
+ F_0(x,u,\nabla u) (v -u) \ge 0 \quad \forall \, v \in \mathcal{A}_g,\\
\mathcal{A}_g := \left\{v\in W^{1,\infty}(B_1^+) \;:\; v\vert_{\de B_1^+\setminus B_1'}= g,  v\vert_{B_1'} \ge 0\right\},
\qquad u\in \mathcal{A}_g.\notag
\end{gather}

We will prove the following result.

\begin{theorem}\label{t.c2}
Let $u$ be a Lipschitz solution to the variational inequality \eqref{e.var-ineq-c2} for the boundary obstacle problem.
Then, there exists $\alpha\in(0,1)$ such that $u \in C^{1,\alpha}(B^+_1 \cup B'_1)$.
\end{theorem}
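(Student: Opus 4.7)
The plan is to upgrade the $C^1$ regularity of Theorem \ref{t.c1} to $C^{1,\alpha}$ by showing that the first derivatives of $u$ belong to a De Giorgi class compatible with the Signorini conditions, following the strategy of Ural'tseva \cite{Uraltseva85, Uraltseva89} for linear operators. The $C^1$ regularity is the key enabling ingredient: it makes the coefficients $a_{ij}(x) = \partial_{p_j} F_i(x, u(x), \nabla u(x))$ continuous on $\overline{B_1^+}$, so that differentiating the Euler--Lagrange equation tangentially produces a \emph{linear} second order elliptic equation for every tangential derivative $w_\tau = \partial_\tau u$ (with $\tau \in \R^n \times \{0\}$), with uniformly elliptic continuous coefficients and bounded inhomogeneous terms.

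\textbf{Step 1 (tangential derivatives in a De Giorgi class).} After the reduction of Section \ref{s.rettificare}, I differentiate the variational inequality \eqref{e.var-ineq-c2} in a tangential direction $\tau$. The $H^2$ bound from Lemma \ref{stima H^2} justifies testing the resulting identity against $\pm(w_\tau - k)_\pm \zeta^2$, with $\zeta$ a cut-off in a half-ball $B_r^+(x_0)$. On $\Lambda(u) \cap B_1'$ the tangential derivative $w_\tau$ vanishes identically, while on $B_1' \setminus \Lambda(u)$ the Signorini boundary condition $F_{n+1}(x, u, \nabla u) = 0$ (valid on the open non-contact set by Lieberman's result) delivers, after tangential differentiation, a Neumann-type boundary term with coefficients bounded by the $C^1$ norm of $u$. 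These two pieces of information suffice to derive the Caccioppoli estimate
\[
\int_{B_{r'}^+(x_0)} |\nabla (w_\tau - k)_\pm|^2 \,\le\, \frac{C}{(r-r')^2}\int_{B_r^+(x_0)} (w_\tau - k)_\pm^2 \,+\, C\, r^{n-1+2\sigma}
\]
for all admissible $k$ (with sign-dependent choices near the obstacle), for some $\sigma > 0$ depending on the modulus of continuity of $\nabla u$. The standard De Giorgi iteration then yields $w_\tau \in C^{0,\alpha}(\overline{B_{1/2}^+})$ for some $\alpha \in (0,1)$.

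\textbf{Step 2 (normal derivative).} By ellipticity \ref{ellip}, the function $t \mapsto F_{n+1}(x, z, p', t)$ is strictly increasing with uniformly positive derivative on compact sets, hence locally Lipschitz invertible in $p_{n+1}$. On $B_1'$ the derivative $\partial_{n+1} u$ satisfies the pointwise relation $F_{n+1}((x', 0), u, \nabla u) \le 0$ with equality on $B_1' \setminus \Lambda(u)$, and equals the free boundary value given by Proposition \ref{p.uniqueness Signorini} on $\Gamma(u)$. Combining this inversion with the Hölder continuity of the tangential derivatives and of $u$ from Step 1, one deduces $\partial_{n+1} u \in C^{0,\alpha}(B_1')$. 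Propagating this boundary Hölder control into the interior via Lieberman's boundary $C^{1,\alpha}$ estimate \cite{Lieberman88} on each open piece of $B_1' \setminus \Gamma(u)$ (either Dirichlet or oblique Neumann type), and matching values at $\Gamma(u)$ through Proposition \ref{p.blowup}, delivers $u \in C^{1,\alpha}(\overline{B_1^+})$.

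\textbf{Main obstacle.} The principal technical difficulty lies in Step 1: verifying the De Giorgi inequality for $w_\tau$ up to the free boundary. Relative to the linear case of \cite{Uraltseva85, Uraltseva89}, the nonlinear dependence of $F$ and $F_0$ on $(u, \nabla u)$ creates lower order error terms whose absorption is delicate, since at this stage only the uniform continuity of $\nabla u$ (rather than a Hölder bound) is available; the small power $r^\sigma$ in the inhomogeneous term of the Caccioppoli inequality must be wrung out precisely from the modulus of continuity of $\nabla u$ provided by Theorem \ref{t.c1}. Moreover, the admissibility of the truncated test functions near the contact set requires a sign-dependent choice of $k$, since one-sided truncations of $w_\tau$ can otherwise violate the unilateral constraint on $u$.
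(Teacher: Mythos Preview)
Your outline misses the central mechanism of the argument and, as written, Step~1 does not close. The Caccioppoli inequality you can actually prove for $w_\tau=\pm\partial_i u$ holds \emph{only for nonnegative levels} $k\ge 0$: the admissibility of the test function $u+\eps D^{-h}_i(\varphi^2(D^h_i u-k)_+)$ in $\mathcal{A}_g$ (Lemma~\ref{l:rappo}) requires $k\ge 0$ to preserve the constraint $u\ge 0$ on $B_1'$. With this one-sided inequality you are \emph{not} in a standard De Giorgi class, and ``standard De Giorgi iteration'' does not yield $C^{0,\alpha}$ for $w_\tau$. In the same vein, your inhomogeneous term $Cr^{n-1+2\sigma}$ with $\sigma$ extracted from the modulus of continuity of $\nabla u$ is neither needed nor available: the paper obtains the clean error $c\,|A(k,R)|$ using only the Lipschitz bound on $u$, and the $C^1$ regularity is used elsewhere (see below), not here.

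What replaces the missing full Caccioppoli class is Ural'tseva's dichotomy, implemented in Proposition~\ref{p:core}. One proves the same one-sided Caccioppoli inequality not only for $\pm\partial_i u$ but also for the auxiliary normal quantity $\Phi u(x)=F_{n+1}(x,0,0,\partial_{n+1}u(x))$; for the latter a nontrivial change of variables is needed to enforce $\partial_{p_{n+1}}F'(x,u,\nabla u)=0$ on $\Lambda(u)$, and the $C^1$ regularity of Theorem~\ref{t.c1} is used precisely to guarantee that $\{\Phi u>k\}\cap B_1'$ is compactly contained in the interior of $\Lambda(u)$ for $k>0$. At each scale either $\mathcal{H}^n(\Lambda(u)\cap B'_{r/2})\ge\tfrac12\mathcal{H}^n(B'_{r/2})$ or $\mathcal{H}^n(\{\Phi u=0\}\cap B'_{r/2})\ge\tfrac12\mathcal{H}^n(B'_{r/2})$; in the first case De Giorgi's oscillation lemma applies to every $\partial_i u$, in the second to $\Phi u$. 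One then iterates over dyadic scales, uses the PDE to bound $|\nabla\Phi u|^2$ by $\sum_i|\nabla\partial_i u|^2+1$, and concludes a Morrey estimate for $\nabla\Phi u$, hence $\Phi u\in C^\beta$. Only \emph{after} this does one read off H\"older continuity of the co-normal derivative $Nu$ on $B_1'$ and invoke Lieberman. Your Step~2, which tries to recover $\partial_{n+1}u\in C^{0,\alpha}(B_1')$ from H\"older tangential derivatives by inverting $F_{n+1}$, also fails on the interior of $\Lambda(u)$, where no equation for $\partial_{n+1}u$ is available; the paper bypasses this by working with $\Phi u$ throughout and only matching $Nu=\Phi u$ on $\Lambda(u)$ at the very end.
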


Clearly, Theorem \ref{t.1} (ii) is a corollary of Theorem \ref{t.c2} by following the usual local straightening of the boundary described in Section \ref{s.rettificare}.

We will prove Theorem \ref{t.c2} by extending to the present nonlinear case the techniques developed by Uralt'seva \cite{Uraltseva85, Uraltseva89} based on De Giorgi's method.

\subsection{Caccioppoli inequality for the tangential Derivatives} \label{sec:tangential-derivatives}

We prove Caccioppoli-type inequalities for the tangential derivatives of $u$, namely $\pm \de_i u$, $i = 1,\dots,n$. Before moving on with the proof, we show a simple lemma (see also \cite{Frehse77}).
Here, we introduce the following notation for the difference quotients:
\begin{gather*}
D^h_i w(x) = {w(x + he_i) - w(x) \over h},
\end{gather*}
whenever the above expression makes sense, i.e. for every $i = 1, \dots, n$, $h \ne 0$ and $x \in B_1^+ \cup B_1'$ such that $x + h e_i \in B_1^+ \cup B_1'$.

\begin{lemma}\label{l:rappo}
Let $u \in \mathcal{A}_g$ and $\varphi \in W^{1,\infty}(B_1^+)$ such that $\supp \varphi \subset B^+_r\cup B'_r(x_0)$, where $x_0 \in B_1'$ and $0 < r < 1 - |x_0|$. Then, for every $0 < h < {1 - |x_0| - r \over 2}$ and $k \geq 0$, there exists $\eps_0 = \eps_0 (h,\|\varphi\|_\infty)>0$ such that
\begin{equation}\label{test Frehse}
v := u + \eps D^{-h}_i (\varphi^2 (D^h_i u - k)_+)\in 
\mathcal{A}_g\quad \forall\; i=1,\dots,n \quad \forall\;\eps\in(0,\eps_0).
\end{equation}
\end{lemma}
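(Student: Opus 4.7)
The approach is to verify directly the three defining properties of $\mathcal{A}_g$ for $v$: membership in $W^{1,\infty}(B_1^+)$, the Dirichlet datum on $\partial B_1^+ \setminus B_1'$, and the unilateral constraint on $B_1'$. The starting point is the explicit expansion
\[
v(x) \;=\; u(x) + \frac{\eps}{h}\Big[\varphi^2(x)(D^h_i u(x)-k)_+ \;-\; \varphi^2(x-he_i)(D^h_i u(x-he_i)-k)_+\Big],
\]
which makes transparent that only the last term in the bracket can push $v$ below $u$.

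Regularity is routine: for fixed $h>0$, $D^h_i u$ is bounded and Lipschitz by the Lipschitz continuity of $u$, so $\varphi^2(D^h_i u-k)_+$ and its translate lie in $W^{1,\infty}$, giving $v\in W^{1,\infty}(B_1^+)$. The lateral boundary condition is a support computation: since $\supp\varphi \subset \overline{B^+_r(x_0)\cup B'_r(x_0)}$ and $e_i$ is tangential, the support of $v-u$ is contained in $\overline{B^+_{r+h}(x_0)\cup B'_{r+h}(x_0)}$, and the hypothesis $h<(1-|x_0|-r)/2$ forces $r+h<1-|x_0|$, so this set is disjoint from $\partial B_1$, whence $v=u=g$ on $\partial B_1^+\setminus B_1'$.

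The essential step is the obstacle constraint on $B_1'$. The plan is to exploit the tangentiality of $i$: for $x\in B_1'$, also $x-he_i\in B_1'$ in the range where the perturbation does not vanish, so $u(x-he_i)\geq 0$ by the constraint on $u$. Combined with $k\geq 0$ this yields the key pointwise bound
\[
(D^h_i u(x-he_i)-k)_+ \;=\; \Big(\tfrac{u(x)-u(x-he_i)}{h}-k\Big)_+ \;\leq\; \frac{u(x)}{h}.
\]
Dropping the non-negative term $\frac{\eps}{h}\varphi^2(x)(D^h_i u(x)-k)_+$ and using $u(x)\geq 0$ on $B_1'$, one obtains
\[
v(x) \;\geq\; u(x)\Big(1-\tfrac{\eps\,\|\varphi\|_\infty^2}{h^2}\Big) \;\geq\; 0 \qquad \forall\;\eps\leq \eps_0:=\tfrac{h^2}{\|\varphi\|_\infty^2},
\]
completing the verification.

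The only genuinely subtle point is this last estimate: recognizing that tangentiality of the index $i$ is exactly what keeps the shifted point $x-he_i$ on the obstacle plane, so that the sign condition $u(x-he_i)\geq 0$ can be used to absorb the wrong-sign contribution produced by the discrete integration-by-parts. Without restricting to tangential directions $i\in\{1,\dots,n\}$, the analogous construction would fail, since a shift in the $x_{n+1}$ direction would move off $B_1'$ and destroy the non-negativity of $u$ at the translated point.
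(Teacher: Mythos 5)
Your proof is correct and follows essentially the same approach as the paper: expand the difference quotient, drop the nonnegative term, and use tangentiality together with $u\ge 0$ on $B_1'$ and $k\ge 0$ to bound the negative contribution by a multiple of $u(x)$. The only cosmetic difference is that you discard $-k$ entirely, giving the clean threshold $\eps_0 = h^2/\|\varphi\|_\infty^2$ directly, whereas the paper keeps the $(u(x)-kh)_+$ factor and finishes with a short two-case argument reaching the same conclusion.
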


\begin{proof}
If $x \in B_1^+ \cup B_1'$ with $|x-x_0| \ge r + h$, then $\varphi(x) = \varphi(x-he_i) = 0$, so that $v(x) = u(x)$; in particular, $v\vert_{\de B_1^+\setminus B_1'}= g$ and $v(x) \geq 0$ for every $x\in B_1'\setminus B_{r+h}(x_0)$.
Therefore, we need only to show that $v(x) \geq 0$ for every $x\in B_{r+h}'(x_0)$. Note that 
\begin{align*}
v(x) &= u(x) + {\eps \over h}\left( \varphi^2(x) (D^h_i u(x) - k)_+ - \varphi^2(x - he_i) \left( {u(x) \over h}  - {u(x - he_i) \over h} -k\right)_+ \right) \\
&\geq u(x) - {\eps \over h} \varphi^2(x - he_i) \left( {u(x) \over h} - {u(x - he_i) \over h} -k\right)_+\\
&\geq u(x) - \eps \,{\|\varphi\|_\infty^2 \over h^2} \left( {u(x)} - k h\right)_+.
\end{align*}
Thus $v(x) \ge u(x) \ge 0$ if $u(x) \le kh$ and $v(x) \ge (1 - {\|\varphi\|_\infty^2 \over h^2} \eps) u(x) + {\|\varphi\|_\infty^2 \over h}k\eps$ otherwise. Therefore, if $\eps$ is sufficiently small, then $v(x) \geq 0$ in both cases.
\end{proof}

In the next proposition we will make use of the previous lemma to show that $\de_i u$ satisfy a Caccioppoli inequality.

\begin{proposition}\label{prop Caccioppoli w}
Let $u$ be the solution to the boundary obstacle problem. There exists $c = c(n,M)>0$ such that the functions $w = \pm \de_i u$, $i = 1,\dots,n$ satisfy
\begin{gather}\label{Caccioppoli w}
\int_{A(k,r)} |\nabla w|^2
\leq {c \over (R-r)^2} \int_{A(k,R)} (w-k)^2+ c\, |A(k,R)|,
\end{gather}
for every $k \geq 0$, $x_0 \in B_1'$, and $0<r<R<1 - |x_0|$,
where $A(k,s) = \{w \ge k\} \cap B_s^+(x_0)$ and 
$|E|$ denotes the Lebesgue measure of a set $E$ in $\R^{n+1}$.
\end{proposition}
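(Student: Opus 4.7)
The plan is to derive the Caccioppoli inequality by testing the variational inequality with a finite-difference variation in a tangential direction, extracting the ellipticity via a chain-rule decomposition of the composite difference quotient, and then letting the step size vanish. This is the classical strategy for $H^2$-type bounds for quasi-linear elliptic equations, adapted here so that the unilateral constraint is preserved by the test function.

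Fix $\varphi \in C^1_c(B_R(x_0))$ with $\varphi \equiv 1$ on $B_r(x_0)$, $0 \le \varphi \le 1$, $|\nabla \varphi| \le 2/(R-r)$, a tangential index $1 \le i \le n$ and a step $0 < h < (1 - |x_0| - R)/2$, and set $\psi_h := (D^h_i u - k)_+$. By Lemma \ref{l:rappo} the variation $v := u + \varepsilon D^{-h}_i(\varphi^2 \psi_h)$ belongs to $\mathcal{A}_g$ for $\varepsilon > 0$ small. Plugging $v$ into \eqref{e.var-ineq-c2}, dividing by $\varepsilon$ and applying the discrete integration-by-parts identity $\int f\cdot D^{-h}_i g = -\int (D^h_i f)\cdot g$ (legitimate because $\varphi^2 \psi_h$ has compact support away from $\partial B_1^+ \setminus B'_1$ and the tangential shift keeps everything inside $B^+_1 \cup B'_1$), one obtains
\[
\int_{B_1^+} \langle D^h_i[F(x,u,\nabla u)], \nabla(\varphi^2 \psi_h)\rangle + D^h_i[F_0(x,u,\nabla u)]\,\varphi^2 \psi_h \le 0,
\]
where $D^h_i$ now acts on composite maps.

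The heart of the proof is the chain-rule expansion. Parametrizing the segment from $(x,u(x),\nabla u(x))$ to $(x+he_i,u(x+he_i),\nabla u(x+he_i))$ gives
\[
D^h_i[F(x,u,\nabla u)] = M^h(x)\,\nabla(D^h_i u) + g^h(x),
\]
where $M^h(x) = \int_0^1 D_p F(\gamma_h(x,\tau))\,d\tau$ is uniformly positive definite with constant $\lambda = \lambda(\Lip u)$ from \ref{ellip}, while $g^h = \int_0^1 \partial_{x_i} F\,d\tau + (\int_0^1 \partial_z F\,d\tau)\, D^h_i u$ is uniformly bounded in $L^\infty$ by a constant depending only on $\Lip u$ and on $F$. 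An analogous decomposition $D^h_i[F_0(\cdots)] = \tilde g^h + \tilde h^h \cdot \nabla D^h_i u$ holds for $F_0$ with uniformly bounded $\tilde g^h, \tilde h^h$. Using $\nabla(\varphi^2 \psi_h) = \varphi^2 \nabla \psi_h + 2\varphi \psi_h \nabla \varphi$ and $\nabla \psi_h = \chi_{\{D^h_i u > k\}}\nabla D^h_i u$, the leading term is controlled below by $\lambda \int \varphi^2 |\nabla \psi_h|^2$. The cross term is bounded by $\frac{\lambda}{4}\int \varphi^2 |\nabla \psi_h|^2 + C\int |\nabla\varphi|^2 \psi_h^2$ via Young, and the $g^h, \tilde g^h, \tilde h^h$ contributions produce analogous absorbable terms of the form $\frac{\lambda}{4}\int \varphi^2 |\nabla \psi_h|^2 + C(\int \psi_h^2 + |A^h(k,R)|)$, with $A^h(k,R) := \{D^h_i u \ge k\} \cap B_R^+(x_0)$ and with the extra $C\int \psi_h^2$ reabsorbed into $\frac{C}{(R-r)^2}\int \psi_h^2$ since $R - r < 1$. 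After absorption,
\[
\int_{B_r^+(x_0)} \chi_{\{D^h_i u \ge k\}}\,|\nabla D^h_i u|^2 \le \frac{C}{(R-r)^2}\int_{A^h(k,R)} (D^h_i u - k)_+^2 + C\,|A^h(k,R)|.
\]

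It remains to let $h \to 0$. Lemma \ref{stima H^2} yields $u \in H^2_{\loc}(B_1^+ \cup B'_1)$, so $D^h_i u \to \partial_i u$ strongly in $H^1_{\loc}$, $\psi_h \to (\partial_i u - k)_+$ in $H^1$, and $\limsup_h |A^h(k,R)| \le |A(k,R)|$ (the set $\{\partial_i u = k\}$ contributes nothing since $\nabla \partial_i u = 0$ a.e.\ there). Weak lower semicontinuity on the left and dominated convergence on the right yield the stated inequality for $w = \partial_i u$. For $w = -\partial_i u$ one repeats the argument with the symmetric admissible variation $v = u + \varepsilon D^{h}_i(\varphi^2 (-D^h_i u - k)_+)$, whose admissibility on $B'_1$ is verified exactly as in Lemma \ref{l:rappo} from $u(x + he_i) \ge 0$ for $x \in B'_1$ and tangential $i$. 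The main technical subtlety is the chain-rule expansion and the accurate bookkeeping of the ellipticity; crucially, this closes only because $i \le n$ is tangential, the only directions in which difference quotients of $u$ respect the unilateral constraint on $B'_1$.
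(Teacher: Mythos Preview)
Your proof is correct and follows essentially the same approach as the paper: test the variational inequality with the tangential finite-difference variation of Lemma~\ref{l:rappo}, integrate by parts discretely, expand via the chain rule to isolate the elliptic term, and pass to the limit $h\to 0$ using the $H^2_{\loc}$ regularity of Lemma~\ref{stima H^2}. The only minor difference is the order of operations: the paper first lets $h\to 0$ to obtain $\int_{B_1^+}\langle a\nabla\partial_i u - q,\nabla\zeta\rangle\le 0$ with $\zeta=\varphi^2(\partial_i u-k)_+$ and then performs the ``standard'' Caccioppoli absorption, whereas you carry out the absorption at the discrete level and pass to the limit afterwards; both routes are standard and equivalent.
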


\begin{proof}
Let $i = 1,\dots,n$ and $\varphi \in C^\infty(B_1^+(x_0))$ such that $\varphi \equiv 1$ on $B_r^+(x_0)$, $\varphi \equiv 0$ outside $B_R^+(x_0)$ and $|\nabla \varphi| \le {c \over R-r}$. We plug $v$ as in \eqref{test Frehse} into \eqref{e.var-ineq-c2} to get
\begin{gather*}
\int_{B_1^+} \langle D^{h}_i (F(x,u,\nabla u)),  \nabla \zeta_h \rangle - F_0(x,u,\nabla u) D^{-h}_i \zeta_h \le 0,
\end{gather*}
where $\zeta_h = \varphi^2 (D^h_i u - k)_+$. We let $h \to 0^+$ to infer that
\begin{equation*}
\int_{B_1^+} \langle a \nabla \de_i u - q, \nabla \zeta \rangle \le 0
\end{equation*}
with $\zeta := \varphi^2 (\de_i u - k)_+$ and
\begin{gather*}
a(x) := D_pF(x,u(x),\nabla u(x))\\
q(x) := - \de_{x_i} F(x,u(x),\nabla u(x)) - \de_z F(x,u(x),\nabla u(x)) \de_i u(x) + F_0(x,u(x),\nabla u(x)) \; e_i.
\end{gather*}
We notice that $\|a\|_\infty + \|q\|_\infty \le C(\|u\|_\infty, \Lip(u))$ and $\langle a(x)\xi,\xi \rangle \ge \lambda(\|u\|_\infty, \Lip(u)) |\xi|^2$.
Standard calculations then lead to \eqref{Caccioppoli w} for $w = \de_i u$. The case $w = - \de_i u$ is analogous.
\end{proof}

\subsection{Normal Derivative}
Now we will deal with the normal derivative of $u$.

\begin{proposition}
Let $u$ be the solution to the boundary obstacle problem. There exists $c = c(n,\|u\|_\infty,\Lip(u))>0$ such that the functions $w(x) = \pm F_{n+1}(x,0,0,\de_{n+1} u(x))$ satisfy
\begin{equation}\label{Caccioppoli n+1}
\int_{A(k,r)} |\nabla w|^2
\le {c \over (R-r)^2} \int_{A(k,R)} (w-k)^2+ c\, |A(k,R)|,
\end{equation}
for every $k \geq 0$, $x_0 \in B_1'$, and $0<r<R<1 - |x_0|$.	
\end{proposition}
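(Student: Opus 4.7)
My plan is to adapt the argument of Proposition \ref{prop Caccioppoli w} to the normal direction. The difference-quotient technique $D^h_i$ is not available in the direction $e_{n+1}$ because $\{x_{n+1}=0\}$ is the boundary; I replace it by direct differentiation of the Euler equation in $x_{n+1}$, justified by the $H^2_{\mathrm{loc}}$ regularity of Lemma \ref{stima H^2}. The boundary integral on $B_1'$ produced by the integration by parts will be controlled using the Signorini complementarity condition together with the $C^1$ regularity from Section 3.

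By ellipticity \ref{ellip}, the map $t\mapsto F_{n+1}(x,0,0,t)$ is a bi-Lipschitz diffeomorphism onto its image, with Lipschitz constants depending on $\|\nabla u\|_\infty$ and $\lambda$; by the chain rule it therefore suffices to prove \eqref{Caccioppoli n+1} for $v:=\pm\partial_{n+1}u$ in place of $w$. Since $u\in H^2_{\mathrm{loc}}(B_1^+\cup B_1')$, differentiating $-\mathrm{div}\,F(x,u,\nabla u)+F_0(x,u,\nabla u)=0$ in $x_{n+1}$ yields the weakly elliptic divergence-form PDE
$$
-\mathrm{div}\bigl(A\nabla v+\bm b\bigr)+\gamma=0 \quad \text{in } B_1^+,
$$
with $A=D_pF(x,u,\nabla u)$ uniformly positive definite by \ref{ellip}, and $\bm b,\gamma\in L^\infty(B_1^+)$ bounded in terms of $\|u\|_\infty$ and $\mathrm{Lip}(u)$. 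Fixing $x_0\in B_1'$, $0<r<R<1-|x_0|$, and a cutoff $\varphi\in C_c^\infty(B_R^+(x_0)\cup B_R'(x_0))$ with $\varphi\equiv 1$ on $B_r(x_0)$ and $|\nabla\varphi|\leq c(R-r)^{-1}$, I test this equation with $\zeta:=\varphi^2(v-k)_+$. Integrating by parts produces the boundary contribution
$$
\mathcal{B}[\zeta]=\int_{B_1'}\partial_{n+1}\bigl[F_{n+1}(x,u,\nabla u)\bigr]\,\zeta\, dx'.
$$
Once $\mathcal{B}[\zeta]=0$ is established, the usual expansion $\nabla\zeta=2\varphi\nabla\varphi(v-k)_++\varphi^2\mathbf{1}_{\{v>k\}}\nabla v$, combined with the uniform ellipticity bound $\int\varphi^2\mathbf{1}_{\{v>k\}}\langle A\nabla v,\nabla v\rangle\geq\lambda\int\varphi^2\mathbf{1}_{\{v>k\}}|\nabla v|^2$ and Young's inequality to absorb the mixed and lower-order terms, yields \eqref{Caccioppoli n+1}.

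The main obstacle is thus the rigorous vanishing $\mathcal{B}[\zeta]=0$. On $\Lambda(u)$ one has $u\equiv 0$ and $\nabla' u=0$ a.e., so by Signorini complementarity $F_{n+1}(x,u,\nabla u)=F_{n+1}(x,0,0,v)\leq 0$; for $v=\partial_{n+1}u$ and $k\geq 0$ this forces $\zeta\equiv 0$ on $\Lambda(u)$. On $B_1'\setminus\Lambda(u)$ the Signorini condition gives $F_{n+1}(x,u,\nabla u)\equiv 0$ identically on this open subset of $B_1'$. To translate these pointwise facts into $\mathcal{B}[\zeta]=0$, I argue via the admissibility of $\zeta$ as a perturbation, in the spirit of Lemma \ref{l:rappo}: both $u\pm\varepsilon\zeta$ belong to $\mathcal{A}_g$ for small $\varepsilon$, possibly after an auxiliary cutoff of the form $\chi(u/\delta)$ near $\Gamma(u)$ where $u$ is small, so that the variational inequality yields an equation rather than an inequality. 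The opposite sign $v=-\partial_{n+1}u$ is handled symmetrically with the test function modified to vanish on $\Lambda(u)$. The $C^1$ regularity from Section 3—in particular Proposition \ref{p.uniqueness Signorini}, giving $w(z_0)=0$ at every $z_0\in\Gamma(u)$—is essential to pass to the limit $\delta\to 0$ in the cutoff argument without losing the Caccioppoli estimate.
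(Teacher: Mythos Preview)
Your proposal has a genuine gap in the treatment of the boundary term, and the preliminary reduction to $v=\pm\partial_{n+1}u$ is where things start to go wrong.

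First, the bi-Lipschitz equivalence between $w$ and $\partial_{n+1}u$ does \emph{not} transfer the Caccioppoli inequality: since $F_{n+1}(x,0,0,\cdot)$ depends on $x$, the super-level sets $\{w>k\}$ are not of the form $\{\partial_{n+1}u>k'\}$, so proving \eqref{Caccioppoli n+1} for $v$ does not give it for $w$. The paper works with $w=-F_{n+1}(x,0,0,\partial_{n+1}u)$ directly, precisely because for $k>0$ the set $\{w>k\}\cap B_1'$ is compactly contained in the interior of $\Lambda(u)$ (using $w=0$ on $B_1'\setminus\Lambda(u)$ and on $\Gamma(u)$ by Proposition \ref{p.uniqueness Signorini}); for $v=\partial_{n+1}u$ no such localization is available.

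Second, your boundary argument fails on both pieces. On $\Lambda(u)$ the inequality $F_{n+1}(x,0,0,v)\le 0$ does \emph{not} force $v\le 0$, hence $\zeta=\varphi^2(v-k)_+$ need not vanish there; and since $u=0$ on $\Lambda(u)$, $u-\varepsilon\zeta$ is not admissible, so the two-sided testing is illegitimate. On $B_1'\setminus\Lambda(u)$, the identity $F_{n+1}(x,u,\nabla u)=0$ holds only along the hyperplane, so its \emph{normal} derivative $\partial_{n+1}[F_{n+1}(x,u,\nabla u)]$ has no reason to vanish. Thus $\mathcal{B}[\zeta]$ is not zero, and the cutoff $\chi(u/\delta)$ you suggest does not cure either of these problems.

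The paper handles this by a different mechanism: after an integration by parts the boundary contribution is $\int_{B_1'}\langle F'(x,u,\nabla u),\nabla'\zeta\rangle$, supported in the interior of $\Lambda(u)$. This term is not zero either; instead, a preliminary $C^1$ change of variables built via Whitney extension (and using the $C^1$ regularity of $u$ from Section~3) arranges that $\partial_{p_{n+1}}F'(x,u,\nabla u)=0$ on $\Lambda(u)$, after which a tangential integration by parts bounds the boundary term by $C\int_{B_1'}|\zeta|\le C\int_{B_1^+}|\nabla\zeta|$, which is then absorbed. This reduction is the missing ingredient in your argument.
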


\begin{proof}
The case $w(x) = F_{n+1}(x,0,0,\de_{n+1} u(x))$ is straightforward since in this case $\{w > k\}\cap B_1'=\emptyset$ and an elliptic differential equation is satisfied by $u$ in $B_1^+$. 
Thus, we focus on the case $w(x) = - F_{n+1}(x,0,0,\de_{n+1} u(x))$. We divide the proof into steps.

\medskip

\textbf{Step 1.} 
We can reduce to the case 
\begin{gather}\label{ip extra F gratis}
\de_{p_{n+1}} F'(x,u(x),\nabla u(x)) = 0\qquad \forall \;x \in \Lambda(u),
\end{gather}
where we write $F=(F', F_{n+1})\in\R^n\times \R$.
To this aim, we make a change of variables
\begin{gather*}
y = \Phi(x',x_{n+1}) = \left(x' + b(x', x_{n+1}), x_{n+1}\right),
\end{gather*}
with $b \in C^1(\R^n \times \R, \R^n)$ given by Whitney's $C^1$-extension Theorem applied to the functions $b_i:\overline{B_{1/2}'}\to \R$ and $d_i:\overline{B_{1/2}'}\to \R^{n+1}$ for $i=1, \ldots, n$,
\begin{gather}\label{propietà di b}
\begin{cases}
b_i(x',0) = 0, \\
d_i(x',0) = (0,V_i(x'))\in \R^n\times\R,
\end{cases}
\quad \forall\;(x',0)\in \overline{B_{1/2}'}
\end{gather}
where
\begin{gather}
V_i(x') = {\de_{p_{n+1}} F_i(x',0,0,0,\de_{n+1} u(x',0)) \over \de_{p_{n+1}} F_{n+1}(x',0,0,0,\de_{n+1} u(x',0))}.
\end{gather}
We remark that $\de_{p_{n+1}} F_{n+1} > 0$ thanks to {\rm \ref{ellip}} and by 
Theorem \ref{t.1} $V_i$ is continuous, so that we are in position to apply Whitney's Theorem and get functions $b_i$ such that $\nabla b_i(x',0) = (0,V_i(x'))$ for all $x'\in \overline{B_{1/2}'}$.

By definition $\Phi$ is a local $C^1$-diffeomorphism between $B_{1/2}$ and a neighborhood of the origin, such that $\Phi\vert_{B_{1/2}'} = \textup{Id}$ 
and $\bar u(x) = u(\Phi(x))$ solves a variational inequality
\begin{gather*}
\int_{B_r^+}\langle \bar F(x,\bar u,\nabla \bar u),\nabla \bar v - \nabla \bar u \rangle + \bar F_0(x,\bar u,\nabla \bar u) (\bar v - \bar u) \, dx,\\
\qquad \forall\; \bar v\vert_{B_r'}\geq 0, \;
\bar v\vert_{(\partial B_r)^+} = \bar u\vert_{(\partial B_r)^+},
\end{gather*}
for suitable $r>0$ (depending on the diffeomorphism $\Phi$) and
\begin{gather*}
\bar F(x,z,p) = |\det A(x)|\,A(x)^{-1}F(\Phi(x),z,(A(x)^{-1})^Tp),\\
\bar F_0(x,z,p) = |\det A(x)|\, F_0(\Phi(x),z,(A(x)^{-1})^Tp),\\
A(x) = D\Phi(x)  =
\begin{pmatrix}
\mbox{Id}_n + D' b(x)	&	 \de_{n+1} b(x) \\
0	&	1
\end{pmatrix}.
\end{gather*}
By direct calculations, we have that
\begin{gather*}
\bar F_i(x',0,0,0,p_{n+1}) = F_i(x',0,0,0,p_{n+1}) - F_{n+1}(x',0,0,0,p_{n+1}) V_i(x').
\end{gather*}
Differentiating with respect to the $p_{n+1}$ variable, we get
\begin{gather*}
\de_{p_{n+1}} \bar F_i(x',0,0,0,p_{n+1}) = \de_{p_{n+1}} F_i(x',0,0,0,p_{n+1}) - \de_{p_{n+1}} F_{n+1}(x',0,0,0,p_{n+1}) V_i(x').
\end{gather*}
Since for $(x',0)\in \Lambda(u)$ we have that $\de_{n+1} \bar u(x',0) = \de_{n+1} u(x',0)$, setting $p_{n+1}=\de_{n+1} \bar u(x',0)$
we get
\begin{gather*}
\de_{p_{n+1}} \bar F_i(x',0,0,0,\de_{n+1} \bar u(x',0)) = 0.
\end{gather*}
Therefore, up to applying the local diffeomorphism $\Phi$, we can always assume that 
\eqref{ip extra F gratis} holds.

\medskip

\textbf{Step 2.} Let $\zeta \in C^\infty(B_1^+ \cup B_1')$ be such that 
\begin{equation}\label{e:supporto zeta}
\supp \zeta \;\cap\; (\de B_1)^+ = \emptyset, \qquad
\supp \zeta \;\cap\; B_1' \subset \subset \Lambda(u).
\end{equation}
Then, 
\begin{align*}
 \int_{B_1'} F_{n+1}(x,u,\nabla u) \; \de_{n+1} \zeta& = - \int_{B_1^+} \div (F(x,u,\nabla u) \; \de_{n+1} \zeta)\\
 & = - \int_{B_1^+} \langle F(x,u,\nabla u), \nabla \de_{n+1} \zeta \rangle
			- \int_{B_1^+} F_0(x,u,\nabla u) \; \de_{n+1} \zeta\\ 
&= \int_{B_1^+} \langle \de_{n+1} (F(x,u,\nabla u)), \nabla  \zeta \rangle
			- \int_{B_1^+} \de_{n+1} \langle F(x,u, \nabla u), \nabla \zeta \rangle\\
			&\qquad- \int_{B_1^+} F_0(x,u,\nabla u) \; \de_{n+1} \zeta\\
&= \int_{B_1^+} \langle a \nabla \de_{n+1} u - q, \nabla \zeta \rangle
			\, + \int_{B_1'} \langle F(x,u, \nabla u), \nabla \zeta \rangle,
\end{align*}
where we have set
\begin{gather*}
a(x) := D_pF(x,u(x),\nabla u(x)),\\
q(x) := - \de_{x_{n+1}} F(x,u(x),\nabla u(x)) - \de_z F(x,u(x),\nabla u(x)) \de_{n+1} u(x) + F_0(x,u(x),\nabla u(x))\; e_{n+1}.
\end{gather*}
We have hence inferred that 
\begin{gather}\label{e:quasi caccioppoli}
\int_{B_1^+} \langle a \nabla \de_{n+1} u - q, \nabla \zeta \rangle
\, + \int_{B_1'} \langle F'(x,u, \nabla u), \nabla' \zeta \rangle = 0.
\end{gather}
From the definition of $w=-F_{n+1}(x,0,0,\partial_{n+1}u(x))$, we get that
\begin{gather*}
\nabla \de_{n+1} u(x)
= - {1 \over \de_{p_{n+1}} F_{n+1}(x,0,0,\partial_{n+1}u(x))} \nabla w
- {\nabla_x F_{n+1}(x,0,0,\partial_{n+1}u(x)) \over \de_{p_{n+1}} F_{n+1}(x,0,0,\partial_{n+1}u(x))}.
\end{gather*}
Therefore we can rewrite equation \eqref{e:quasi caccioppoli} as
\begin{gather*}
\int_{B_1^+} \langle \tilde a \nabla w - \tilde q, \nabla \zeta \rangle
= \int_{B_1'} \langle F'(x,u, \nabla u), \nabla' \zeta \rangle,
\end{gather*}
where $\tilde a$ and $\tilde q$ are given by:
\begin{gather*}
\tilde a(x) := {1 \over \de_{p_{n+1}} F_{n+1}(x,0,0,\de_{n+1} u(x))} a(x),\\
\tilde q(x) := - q(x) - {1 \over \de_{p_{n+1}} F_{n+1}(x,0,0,\de_{n+1} u(x))} a(x) \nabla_x F_{n+1}(x,0,0,\de_{n+1} u(x)).
\end{gather*}
Since $\supp \zeta \;\cap\; B_1' \subset \subset \Lambda(u)$ and we assume \eqref{ip extra F gratis}, there exists a constant $C>0$ such that
\begin{align*}
\int_{B_1^+} \langle \tilde a \nabla w - \tilde q, \nabla \zeta \rangle
			& = \int_{B_1'} \langle F'(x',0,0,0,\de_{n+1} u(x',0)), \nabla' \zeta(x',0) \rangle \; dx'
			\\& = - \int_{B_1'} \div'(F'(x',0,0,0,\de_{n+1} u(x',0))) \; \zeta(x',0) \; dx'
			\\& = - \int_{B_1'} \div_{x'} F'(x',0,0,0,\de_{n+1} u(x',0)) \; \zeta(x',0) \; dx'
			\\& \le C \int_{B_1'} |\zeta|
			\le C \int_{B_1^+} |\nabla \zeta|,
\end{align*}
where in the last inequality we used the trace theorem for Sobolev functions $W^{1,1}(B_1^+)$.
Thus, we get the existence of a constant $c=c(\|u\|_\infty, \Lip(u))>0$ such that
\begin{equation}\label{eq con zeta}
\int_{B_1^+} \langle \tilde a \nabla w, \nabla \zeta \rangle \leq c\, \int_{B_1^+} |\nabla \zeta|,
\end{equation}
for all $\zeta\in C^\infty$ (and, hence, by a density argument for all $\zeta \in H^1(B_1^+)$)
with support satisfying the conditions \eqref{e:supporto zeta}.

Thus, we can consider $\zeta = \varphi^2 (w - k)_+$ with $k>0$ and $\varphi \in C_c^\infty(B_1^+)$ such that 
\[
\textup{
$\varphi \equiv 1$ on $B_r^+(x_0)$, $\varphi \equiv 0$ outside $B_R^+(x_0)$ and $|\nabla \varphi| \le {c \over R-r}$.}
\] 
Note that for $0 < k \le \|w\|_{L^\infty(B_1^+)}$ we have that $\{w > k\} \cap B_1'$ is open and compactly contained in the interior of $\Lambda(u)$ (in the relative topology of $B_1'$), because $u \in C^1(B_1^+ \cup B_1')$ by Theorem \ref{t.1}, 
therefore $\zeta$ satisfies the conditions \eqref{e:supporto zeta} on its support.
From standard computations we deduce \eqref{Caccioppoli n+1} for $k > 0$ (recall that the matrix $\tilde a$ is uniformly elliptic since $\lambda \le \de_{p_{n+1}} F_{n+1} \le L$).
Finally, we pass to the limit for $k \to 0^+$ to prove the inequality holds for $k = 0$ too, the case $k > \|w\|_{L^\infty(B_1^+)}$ being trivial.
\end{proof}

\subsection{H\"older continuity of the normal derivative}

Finally, we are ready to prove our second main result, Theorem \ref{t.c2}. The core of the proof is in Proposition \ref{p:core}, where we prove that the function
\[
\Phi u(x',0) = F_{n+1}((x',0), 0,0, \partial_{n+1} u(x',0))
\]
is H\"older continuous.
In what follows we denote by $\mathcal{H}^n$ the Hausdorff measure of dimension $n$.

\begin{proposition}\label{p:core}
	Let $u$ be a Lipschitz solution to the variational inequality \eqref{e.var-ineq-c2} for the boundary obstacle problem.
	Then, $\Phi u \in C^\beta(B_1^+ \cup B_1')$ for some $\beta\in (0,1)$.
\end{proposition}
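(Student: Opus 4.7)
The plan is to apply De Giorgi's iteration to the Caccioppoli inequality \eqref{Caccioppoli n+1} for $w = \pm F_{n+1}(x,0,0,\de_{n+1}u(x))$, combined with the complementary boundary information on $B_1'$ provided by the unilateral obstacle and the Signorini relation. By the Lipschitz bound on $u$ and $F_{n+1}\in C^1$, the function $w$ is bounded on $B_1^+$; together with \eqref{Caccioppoli n+1} this places $w$ in the De Giorgi class $\cB_2(B_1^+)$, and hence by the classical De Giorgi theorem $w$ is locally H\"older continuous in the interior of $B_1^+$. The task is therefore to promote this to H\"older continuity up to $B_1'$.

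For a point $z_0\in B_1'$ I would split into three cases. If $z_0$ lies in the relative interior of $\Lambda(u)$ in $B_1'$, or if $z_0\in B_1'\setminus\Lambda(u)$, then in a neighborhood of $z_0$ the function $u$ solves a quasilinear Dirichlet problem or a quasilinear Neumann problem respectively, and the Giaquinta--Giusti and Lieberman estimates already used in the proof of Theorem \ref{t.c1} give $u\in C^{1,\alpha}$ locally, whence $w\in C^\alpha$ up to $B_1'$. The essential case is $z_0\in\Gamma(u)$: by Proposition \ref{p.uniqueness Signorini} one has $w(z_0)=0$, and by the continuity of $\nabla u$ obtained in Theorem \ref{t.c1} one can read off the following boundary information on $B_r'(z_0)$:
\begin{itemize}
\item on $\Lambda(u)\cap B_r'(z_0)$, the Signorini inequality yields the sign condition $\mp w = F_{n+1}(x,0,0,\de_{n+1}u)\leq 0$;
\item on $(B_1'\setminus\Lambda(u))\cap B_r'(z_0)$, the Signorini equality $F_{n+1}(x,u,\nabla u)=0$ together with the bound $|u|+|\nabla' u|\leq\omega(r)$ near $z_0$ (from the continuity of $u$ and of $\nabla' u$, and from $u(z_0)=0$, $\nabla' u(z_0)=0$) yields $|w|\leq C\omega(r)$ for some modulus $\omega(r)\downarrow 0$.
\end{itemize}

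The key step is to derive, for $z_0\in\Gamma(u)$, a boundary oscillation decay of the form
\[
\osc_{B_{\theta r}^+(z_0)} w \leq \gamma\,\osc_{B_r^+(z_0)} w + C\,\omega(r),
\]
for constants $\theta,\gamma\in(0,1)$ and $C>0$ independent of $r$. This is obtained by the De Giorgi alternative applied to $w$ and $-w$: either the super-level set $\{w\geq k\}\cap B_r^+(z_0)$ has sufficiently small measure for the Moser-type iteration coming from \eqref{Caccioppoli n+1} to force $\sup_{B_{\theta r}^+(z_0)} w\leq k$, or its complement is small, in which case the sign information on $\Lambda(u)\cap B_r'(z_0)$ together with the quantitative smallness $|w|\leq C\omega(r)$ on the free portion provides the matching bound $\inf_{B_{\theta r}^+(z_0)} w\geq k-C\omega(r)$. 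Iterating this decay on dyadic scales around $z_0$ gives a local H\"older estimate with uniform exponent $\beta\in(0,1)$, and combining with the other two cases via a standard covering argument produces $w\in C^\beta(B_1^+\cup B_1')$, whose restriction to $B_1'$ is $\Phi u$.

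The main obstacle is precisely the boundary alternative at free boundary points. Unlike in the pure Dirichlet or Neumann setting, the condition satisfied by $w$ on $B_r'(z_0)$ is genuinely mixed --- a sign condition on $\Lambda(u)$ together with a quantitative smallness on the non-coincidence set --- so one has to carry the error $\omega(r)$ through the De Giorgi iteration in a way that can be reabsorbed on each dyadic scale without destroying the geometric decay. This is the nonlinear counterpart of Uralt'seva's approach in \cite{Uraltseva89}, and it is here that the bulk of the technical work will lie.
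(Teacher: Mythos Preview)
Your proposal has a genuine circularity that prevents it from yielding H\"older continuity. The error term $\omega(r)$ in your claimed oscillation decay
\[
\osc_{B_{\theta r}^+(z_0)} w \leq \gamma\,\osc_{B_r^+(z_0)} w + C\,\omega(r)
\]
comes from the modulus of continuity of $u$ and $\nabla' u$ near $z_0$. At this stage of the argument only Theorem~\ref{t.c1} (plain $C^1$, no rate) and Frehse's Theorem~\ref{Frehse} (a logarithmic modulus $\omega(t)=C|\log t|^{-q}$) are available, so $\omega(r)$ decays more slowly than any positive power of $r$. Iterating the inequality over dyadic scales then gives only $\osc_{B_\rho^+(z_0)} w \lesssim \gamma^k\osc_{B_r^+(z_0)}w + C\omega(r)$ for $\rho=\theta^k r$, which recovers continuity but not $C^\beta$. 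In effect you are assuming a H\"older modulus for $\nabla' u$ in order to prove H\"older continuity of $\Phi u$, which is precisely what Proposition~\ref{p:core} is meant to supply as input to Theorem~\ref{t.c2}.

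The paper breaks this circularity by a different dichotomy, following Uralt'seva. At every center $x_0\in B_1'$ and every scale $r$ (not only at free boundary points), either $\Lambda(u)\cap B_{r/2}'(x_0)$ has at least half the measure of $B_{r/2}'(x_0)$, in which case the \emph{tangential} derivatives $\de_i u$ vanish on a boundary set of positive density and Theorem~\ref{oscillazione} applied via \eqref{Caccioppoli w} gives oscillation decay for $\de_i u$ with error $cr$ (a genuine power, not $C\omega(r)$); or the complementary alternative holds and one obtains oscillation decay for $\Phi u$ itself via \eqref{Caccioppoli n+1}, again with error $cr$. The crucial link is that the interior elliptic equation bounds $|\nabla\Phi u|^2$ pointwise by $C\big(\sum_{i=1}^n|\nabla\de_i u|^2+1\big)$, so a Morrey-type estimate for the tangential derivatives transfers to one for $\nabla\Phi u$. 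Running the alternative along $2\nu+1$ dyadic radii, at least $\nu+1$ fall into the same case; iterating within that case yields power decay of the corresponding oscillation, hence a uniform Morrey bound $\int_{B_{\rho/2}^+(x_0)}|\nabla\Phi u|^2\le C\rho^{n-1+2\beta}$ regardless of which alternative occurred, and $\Phi u\in C^\beta$ follows from Morrey's theorem. The point is that the error at every step is $cr$, never an unquantified modulus, and that the tangential Caccioppoli inequality \eqref{Caccioppoli w} is used on equal footing with \eqref{Caccioppoli n+1} rather than being discarded.

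A secondary issue: your covering argument combining the three boundary cases would also need care, since the local $C^{1,\alpha}$ estimates from \cite{GiaquintaGiusti84} and \cite{Lieberman88} degenerate as the ball approaches $\Gamma(u)$; but this would be repairable by a nearest-free-boundary-point argument if the estimate at points of $\Gamma(u)$ were genuinely H\"older.
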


\begin{proof}
Let $x_0 \in B_1'$ and $0 < r < 1 - |x_0|$. Then either
\begin{equation}\label{caso A}
\mathcal{H}^n(\Lambda(u) \cap B_{r/2}'(x_0)) \geq {1 \over 2} \mathcal{H}^n(B_{r/2}'(x_0))
\end{equation}
or
\begin{equation}\label{caso B}
\mathcal{H}^n(\{\Phi u = 0 \} \cap B_{r/2}'(x_0))
 \ge {1 \over 2} \mathcal{H}^n(B_{r/2}'(x_0)).
	\end{equation}
If \eqref{caso A} held, then
\begin{gather*}
\mathcal{H}^n(\{\de_i u = 0\} \cap B_{r/2}'(x_0)) \ge {1 \over 2} 
\mathcal{H}^n(B_{r/2}'(x_0))\qquad \forall\;i=1,\ldots, n.
\end{gather*}
Therefore, by De Giorgi's decay of the oscillation (see Theorem \ref{oscillazione} in the appendix) we have
\begin{gather}\label{stima osc1}
\osc_{B_{r/4}^+(x_0)} \de_i u \le \kappa \,\osc_{B_r^+(x_0)} \de_i u + c\,r
\qquad \forall\;i=1,\ldots, n.
\end{gather}
for some $\kappa = \kappa(n,M) \in (0,1)$.
Similarly, if \eqref{caso B} held, then 
\begin{gather}\label{stima osc2}
\osc_{B_{r/4}^+(x_0)} \Phi u \le \kappa \,\osc_{B_r^+(x_0)} \Phi u + c\,r.
\end{gather} 
We now follows Uralt'seva \cite{Uraltseva89}: we set $r_j := 4^{-j}r$, fix $\nu \ge 1$ and we consider the $2\nu+1$ radii $r_0,\dots,r_{2\nu}$. Then either \eqref{stima osc2} holds with $w = \Phi u$ for at least $\nu+1$ of these radii, or \eqref{stima osc1} holds with $w = \de_i u$, for every $i=1,\dots,n$ and for at least $\nu+1$ of these radii.
Let $r_{j_h}$, $h=0,\dots,\nu$ be the radii such that \eqref{stima osc2} holds with $w = \Phi u$. We label them so that $0 \le j_0 < j_1 < \dots < j_\nu \le 2\nu$, and notice then that $h \le j_h$ for every $h = 0,\dots,\nu$. We now set $\varphi(\rho) = \osc_{B_\rho^+(x_0)} w$ for every $0<\rho\le r$. We have
	\begin{gather*}
		\varphi(r_{j_{h+1}}) \le \varphi(r_{j_h +1}) \le \kappa \varphi(r_{j_h}) + 4^{-j_{h}}c\,r \le \kappa \varphi(r_{j_h}) + 4^{-h}c\,r
	\end{gather*}
for every $h = 0,\dots,\nu-1$. We then iterate the estimate to get
	\begin{gather*}
		\varphi(r_{2\nu}) \le \varphi(r_{j_\nu}) \le \kappa^\nu \left( \varphi(r) + {c\,r \over \kappa - {1 \over 4}} \right) \le \kappa^\nu (\varphi(r) + 4\,c\,r),
	\end{gather*}
where we have supposed without loss of generality that $\kappa \ge {1 \over 2}$. Note that the above inequality trivially holds for $\nu=0$. Hence, for $r_{2\nu+2} \le \rho < r_{2\nu}$ we have
	\begin{gather*}
		\osc_{B_\rho^+(x_0)} w = \varphi(\rho) \le \varphi(r_{2\nu}) \le \kappa^\nu (\varphi(r) + 4\,c\,r) \le \kappa^{-1} \left( {\rho \over r} \right)^{|\log_4 \kappa| \over 2} (\varphi(r) + 4\,c\,r) \le \\
		\le c\, (r^{-\beta} \osc_{B_r^+(x_0)} w + r^{1-\beta}) \rho^\beta \le c\, (r^{-\beta} \osc_{B_r^+(x_0)} w + 1) \rho^\beta,
	\end{gather*}
	where we have set $0<\beta = {|\log_4 \kappa| \over 2}<1$. So for every $\nu \ge 0$ we have that
	\begin{equation}\label{osc w}
		\osc_{B_\rho^+(x_0)} w \le c\, (r^{-\beta} \osc_{B_r^+(x_0)} w + 1) \rho^\beta,
	\end{equation}
	for every $r_{2\nu+2} \le \rho < r_{2\nu}$, either with $w = \Phi u$ or with $w = \de_i u$ for every $i=1,\dots,n$.
Now we set $k = \max\{0,\inf\limits_{B_\rho^+(x_0)} w\}$. By \eqref{Caccioppoli w} we get
	\begin{gather*}
		\int_{A\left(0,{\rho\over2}\right)} |\nabla w|^2 \le {c \over \rho^2} \int_{A(k,\rho)} (w-k)^2 + c\rho^{n+1} \le \\
		\le c\, \rho^{n+1} \left( \rho^{-2} \osc_{B_\rho^+(x_0)}^2 w + 1 \right).
	\end{gather*}
The same applies to $-w$, so summing up we have
\begin{gather*}
		\int_{B_{\rho / 2}^+(x_0)} |\nabla w|^2 \le c\, \rho^{n+1} \left( \rho^{-2} \osc_{B_\rho^+(x_0)}^2 w + 1 \right).
	\end{gather*}
Combined with \eqref{osc w}, this gives
\begin{gather}\label{e.morrey}
		\int_{B_{\rho / 2}^+(x_0)} |\nabla w|^2 \le c\, \rho^{n-1+2\beta} \left( r^{-2\beta} \osc_{B_r^+(x_0)}^2 w + 1 \right) =: C \rho^{n-1+2\beta},
	\end{gather}
for every $r_{2\nu+2} \le \rho < r_{2\nu}$, either with $w = \Phi u$ or with $w = \de_i u$ for every $i=1,\dots,n$. However $u$ satisfies an elliptic equation in $B_1^+$, so we can estimate $\de_{n+1}^2 u$ in terms of all the other second order derivatives of $u$. Thus we get
\begin{align*}
|\nabla \Phi u|^2
		&= |\nabla_x F_{n+1} + \de_{p_{n+1}} F_{n+1} \nabla \de_{n+1} u|^2\\
&\le c \; \left( |\nabla \de_{n+1} u|^2 + 1 \right)
\le c \left( \sum_{i = 1}^n |\nabla \de_i u|^2 + 1 \right)\quad \text{a.e. in }B_1^+.
	\end{align*}
Thus in any case
	\begin{gather*}
		\int_{B_{\rho / 2}^+(x_0)} |\nabla \Phi u|^2 \le C \rho^{n-1+2\beta}
		\qquad \forall\;\rho\in (0,1-|x_0|),
	\end{gather*}
independently of \eqref{e.morrey} holding for $\Phi u$ or for $\partial_iu$, $i=1,\ldots, n$.
By Morrey's theorem, we conclude that $\Phi u \in C^\beta(B_1^+)$ for some $\beta\in (0,1)$.
\end{proof}

\subsection{Proof of Theorem \ref{t.c2}}
Since $u$ solves the boundary obstacle problem, the co-normal derivative
\begin{gather*}
B_1'\ni x \mapsto F_{n+1}(x,u(x),\nabla u(x))
\end{gather*}
is continuous by Theorem \ref{t.c1}, vanishes on $B_1' \setminus \Lambda(u)$
and $Nu (x) = \Phi u (x)$ for every $x\in \Lambda(u)$ (because $u(x)=|\nabla' u(x)|=0$). Therefore,  $(Nu)(\cdot,0) \in C^\beta(B_1')$: indeed, 
for $(x',0) \in \Lambda(u)$ and $(x',0) \in B_1' \setminus \Lambda(u)$, there exists  $z' = (1-t) x' + t y \in \Gamma(u)$ for some $0 \leq t < 1$ such that
\begin{gather*}
|Nu(x') - Nu(y')| = |(Nu)(x',0)|= |\Phi u(x',0) - \Phi u (z',0)|
\le c\, |x'-z'|^\beta\le c\, |x'-y'|^\beta.
	\end{gather*}
We are now in the hypotheses to apply Theorem 2 of \cite{Lieberman88} (see also Theorem \ref{t.lieberman} in the appendix) and infer
$u \in C^{1,\alpha}(B^+_1 \cup B'_1)$ for some $\alpha \in (0,1)$.\qed

\appendix
\section{De Giorgi's oscillation lemma}
For readers' convenience, we report here De Giorgi's oscillation lemma \cite{DeGiorgi57} (e.g., we follow Chapter 7 of \cite{Giustibook} with small changes).
In this section $w$ is any function on $B_1^+$ satisfying a Caccioppoli inequality, namely
\begin{equation}\label{Caccioppoli Appendix}
	\int_{A(k,r)} |\nabla w|^2
	\le {Q \over (R-r)^2} \int_{A(k,R)} (w-k)^2
	+ Q\, |A(k,R)|
\end{equation}
for some $Q \geq 0$ and for every $k \ge 0$, $x_0 \in B_1'$, and $0<r<R<1 - |x_0|$. Throughout this section, every constant in the statements will depend on $n$ and on $Q$ unless specified.

The first consequence of inequality \eqref{Caccioppoli Appendix} is the following.

\begin{proposition}\label{prima conseguenza di Caccioppoli}
	There exists $c > 0$ such that
	\begin{equation}
		\int_{A(k,r)} (w-k)^2 \le
		c\, |A(k,R)|^{2 \over n + 1}
		\left(
		{1 \over (R-r)^2} \int_{A(k,R)} (w-k)^2
		+ c\, |A(k,R)|
		\right) 
	\end{equation}
for every $k \ge 0$, and $0 < r < R < 1 - |x_0|$.
\end{proposition}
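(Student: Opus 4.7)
The plan is to combine the hypothesis \eqref{Caccioppoli Appendix} with the Sobolev embedding $H^1 \hookrightarrow L^{2^*}$, where $2^* = 2(n+1)/(n-1)$, exploiting the smallness of the superlevel set $A(k,R)$ via H\"older's inequality. This is the standard De Giorgi--Giusti scheme adapted to the half-ball.

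Fix an intermediate radius $\rho := (R+r)/2$ and a cutoff $\eta \in C_c^\infty(B_\rho(x_0))$ with $\eta \equiv 1$ on $B_r(x_0)$ and $|\nabla \eta| \le 4/(R-r)$. Set $v := \eta(w-k)_+$, viewed as a function on $B_\rho^+(x_0) \cup B_\rho'(x_0)$ vanishing on $(\de B_\rho(x_0))^+$. Extending $v$ by even reflection across $\{x_{n+1}=0\}$ produces an element of $H_0^1(B_\rho(x_0))$, so the Sobolev inequality on $\R^{n+1}$ yields
\[
\|v\|_{L^{2^*}(B_\rho^+)}^2 \le C\,\|\nabla v\|_{L^2(B_\rho^+)}^2.
\]
H\"older's inequality on the support $\{v>0\} \subset A(k,R)$ then gives
\[
\int_{A(k,r)}(w-k)^2 \;\le\; \int_{B_\rho^+} v^2 \;\le\; |A(k,R)|^{2/(n+1)} \,\|v\|_{L^{2^*}(B_\rho^+)}^2 .
\]

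Using the pointwise bound $|\nabla v|^2 \le 2\eta^2|\nabla w|^2 \mathbf{1}_{\{w\ge k\}} + 2|\nabla\eta|^2(w-k)_+^2$, the two displays combine into
\[
\int_{A(k,r)}(w-k)^2 \le C\,|A(k,R)|^{2/(n+1)} \left[\int_{A(k,\rho)} |\nabla w|^2 + \frac{C}{(R-r)^2} \int_{A(k,R)}(w-k)^2\right].
\]
It only remains to absorb $\int_{A(k,\rho)} |\nabla w|^2$, which is exactly what the hypothesis \eqref{Caccioppoli Appendix} provides: applied to the pair of radii $(\rho, R)$, so that $R-\rho = (R-r)/2$, it gives
\[
\int_{A(k,\rho)} |\nabla w|^2 \le \frac{4Q}{(R-r)^2} \int_{A(k,R)}(w-k)^2 + Q\,|A(k,R)|.
\]
Inserting this into the preceding bound and enlarging the constant $c$ to dominate all the numerical factors yields the desired inequality. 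No substantive obstacle is anticipated; the only step requiring care is the reflection argument that justifies applying the Sobolev embedding to a function defined only on the half-ball and only vanishing on the spherical part of the boundary.
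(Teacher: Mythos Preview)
Your argument is correct and follows essentially the same route as the paper: intermediate radius, cutoff, H\"older against the measure of the superlevel set, Sobolev embedding, then the Caccioppoli hypothesis on the gradient term. The only detail you omit is the low-dimensional case $n=1$, where $2^*$ is not defined and the paper instead uses the embedding $W^{1,1}\hookrightarrow L^2$; this is a routine adjustment.
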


\begin{proof}
	We fix $k \ge 0$ and $0<r<R<1 - |x_0|$ and set $r' := {r + R \over 2}$. Let $\varphi \in C^{\infty}(B_R^+(x_0))$ such that $\supp \varphi \subset\subset B_{r'}(x_0)$, $0 \le \varphi \le 1$, $\varphi \equiv 1$ on $B_r^+(x_0)$ and $|\nabla \varphi| \le {c \over R-r}$. If $n \ge 2$, we have
	\begin{gather*}
		\int_{A(k,r)} (w-k)^2 \le
		\int_{A(k,r')} (\varphi \, (w-k))^2 \le \\
		\le |A(k,R)|^{2 \over n + 1} \left( \int_{A(k,r')} (\varphi \, (w-k))^{2^*} \right)^{2 \over 2^*} \le
		c \, |A(k,R)|^{2 \over n + 1} \int_{A(k,r')} |\nabla(\varphi \, (w-k))|^2 \le \\
		\le c \, |A(k,R)|^{2 \over n + 1} \left( \int_{A(k,r')} (w-k)^2|\nabla \varphi|^2 + \int_{A(k,r')} \varphi^2|\nabla w|^2 \right) \le \\
		\le c \, |A(k,R)|^{2 \over n + 1} \left( {1 \over (R-r)^2} \int_{A(k,R)} (w-k)^2 + \int_{A(k,r')} |\nabla w|^2 \right) \le \\
		\le c \, |A(k,R)|^{2 \over n + 1} \left( {1 \over (R-r)^2} \int_{A(k,R)} (w-k)^2 \, +
		c\, |A(k,R)| \right).
\end{gather*}
If $n=1$ then $1^* = 2$, so in the above estimates we can replace the first two lines with
	\begin{gather*}
		\int_{A(k,r)} (w-k)^2 \le
		\int_{A(k,r')} (\varphi \, (w-k))^2 \le \\
		\le c\,\left( \int_{A(k,r')} |\nabla(\varphi \, (w-k))| \right)^2 \le
		c \, |A(k,R)| \int_{A(k,r')} |\nabla(\varphi \, (w-k))|^2
	\end{gather*}
and the rest of the proof is the same.
\end{proof}

\begin{proposition}\label{lim loc}
There exists $c > 0$ such that
\begin{gather*}
		\sup\limits_{B_{\rho / 2}^+(x_0)} w \le c \left( \fint_{A(k_0,\rho)} (w-k_0)^2 \right)^{1 \over 2} \left( {|A(k_0,\rho)| \over \rho^{n+1}} \right)^{\gamma \over 2} + k_0 + c \rho,
\end{gather*}
for every $k_0 \ge 0$ and $0 < \rho < 1 - |x_0|$, where $0<\gamma<1$ is such that $\gamma^2 + \gamma = {2 \over n+1}$.
\end{proposition}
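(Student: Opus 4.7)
The plan is to deduce the claim from a De~Giorgi iteration on level sets. Fix $d>0$ (to be chosen at the end) and set
\[
\rho_h = \tfrac{\rho}{2}\bigl(1+2^{-h}\bigr), \quad k_h = k_0 + d\bigl(1-2^{-h}\bigr), \quad V_h := |A(k_h,\rho_h)|, \quad I_h := \int_{A(k_h,\rho_h)}(w-k_h)^2,
\]
so that $\rho_h\downarrow\rho/2$ and $k_h\uparrow k_0+d$. The strategy is to prove $V_h\to 0$ whenever $d$ satisfies a quantitative smallness condition; once this is established, $w\le k_0+d$ almost everywhere on $B^+_{\rho/2}(x_0)$ and the stated bound follows by calibrating $d$ against that condition.

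The recursive step applies Proposition~\ref{prima conseguenza di Caccioppoli} at level $k_{h+1}$ on annular radii $\rho_{h+1}<\rho_h$. Using the inclusion $A(k_{h+1},\rho_h)\subset A(k_h,\rho_h)$ and the pointwise inequality $(w-k_{h+1})^2\le(w-k_h)^2$ on that set yields
\[
I_{h+1}\le c\,|A(k_{h+1},\rho_h)|^{\mu}\Bigl(\tfrac{4^{h+2}}{\rho^2}\,I_h + c\,|A(k_{h+1},\rho_h)|\Bigr),\qquad \mu=\tfrac{2}{n+1}.
\]
Chebyshev's inequality gives $|A(k_{h+1},\rho_h)|\le I_h\,4^{h+1}/d^2$, while the trivial monotonicity bound gives $|A(k_{h+1},\rho_h)|\le V_h$. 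Assuming $d\ge c\rho$ (which is what will ultimately produce the additive $+c\rho$ in the final estimate), the inhomogeneous term $c|A|$ is dominated by the leading one, and these three ingredients combine into a coupled recursion
\[
I_{h+1}\le c\,4^h\,V_h^{\mu}\,I_h/\rho^2,\qquad V_{h+1}\le c\,4^h\,I_h/d^2.
\]

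To close the induction I would propagate the ansatz $I_h\le M\,r_1^h$, $V_h\le N\,r_2^h$ (with $M\ge I_0$, $N\ge V_0$) through both recursions. The matching constraints produce $r_2\le 4^{-1/\mu}$, $r_2\ge 4r_1$, $r_1\ge cV_0^{\mu}/\rho^2$, and $M\le c\,N\,r_2\,d^2$, which after passage to the scale-invariant quantities $z_0 = V_0/\rho^{n+1}$ and $J_0 = \fint_{A(k_0,\rho)}(w-k_0)^2$ (using $(n+1)\mu = 2$) encode a $2\times 2$ linear iteration whose characteristic polynomial is $\lambda^2-\lambda-\mu=0$, with positive root $1+\gamma$ and negative root $-\gamma$; the exponent $\gamma$ in the statement is precisely the unique positive solution of $\gamma^2+\gamma=\mu$. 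Solving the resulting smallness condition for the critical $d$ yields a bound of the form $d^2\lesssim J_0\,z_0^{\gamma}$, which combined with the floor $d\ge c\rho$ gives exactly the estimate claimed.

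The main technical obstacle is the delicate bookkeeping required to carry out the coupled iteration with the correct interpolation between the Chebyshev bound $|A(k_{h+1},\rho_h)|\le I_h 4^h/d^2$ and the monotonicity bound $|A(k_{h+1},\rho_h)|\le V_h$. A naive single-variable iteration using only Chebyshev gives the weaker exponent $1/2$ (i.e.\ effectively $\gamma=1$), whereas using only monotonicity fails to produce a contracting iteration at all; the sharp exponent $\gamma/2$ arises precisely from the two-dimensional linearized dynamics and its characteristic equation $\gamma^2+\gamma=2/(n+1)$.
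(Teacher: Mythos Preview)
Your setup---the dyadic levels $k_h$ and radii $\rho_h$, the appeal to Proposition~\ref{prima conseguenza di Caccioppoli} together with Chebyshev, and the floor $d\ge c\rho$ to absorb the inhomogeneous term---is exactly the paper's. The difference is only in how the iteration is closed. You keep $I_h$ and $V_h$ separate and propose to analyze the resulting $2\times 2$ log-linear system; the paper instead packages them into the single quantity
\[
\phi(k,r)=|A(k,r)|^{\gamma}\int_{A(k,r)}(w-k)^2
\]
and observes that your two recursions combine into $\phi_{h+1}\le D\,B^h\,\phi_h^{1+\gamma}$ with $D=c\,d^{-2\gamma}\rho^{-2}$, $B=4^{1+\gamma}$: the power of $V_h$ on the right is $\mu$ and on the left is $\gamma(1+\gamma)$, and these match precisely under $\gamma^2+\gamma=\mu$. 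This is the same condition you reach via the characteristic polynomial, and the combination $V^\gamma I$ is nothing but the eigenvector of your linearized matrix for the eigenvalue $1+\gamma$; but the one-variable form lets one invoke the standard lemma (if $\phi_0\le D^{-1/\gamma}B^{-1/\gamma^2}$ then $\phi_h\to 0$) immediately, without the ansatz and constraint-matching that you only sketch.

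Two small points. First, the condition on $d$ at the end should go the other way: one needs $d$ \emph{large} enough, $d\gtrsim J_0^{1/2}z_0^{\gamma/2}$, for the iteration to contract; together with $d\ge c\rho$ this is what gives the stated bound. Second, the constraints you list (``$r_2\le 4^{-1/\mu}$, $r_2\ge 4r_1$, $r_1\ge cV_0^{\mu}/\rho^2$, $M\le c\,N r_2 d^2$'') do not by themselves close the two-variable scheme---you still owe a compatibility between $M$, $N$, $r_1$, $r_2$ and the initial data, and verifying that a consistent choice exists is precisely the bookkeeping the single-variable formulation avoids.
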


\begin{proof} We set
\begin{gather*}
\phi(k,r) = |A(k,r)|^\gamma \int_{A(k,r)} (w-k)^2.
\end{gather*}
From Proposition \ref{prima conseguenza di Caccioppoli} we get
	\begin{gather*}
		\int_{A(k,r)} (w-k)^2 \le
		c \left( {1 \over (R-r)^2} + {1 \over (k-h)^2} \right) |A(k,R)|^{\gamma (1+ \gamma)} \int_{A(h,R)} (w-h)^2 \\
		|A(k,r)|^\gamma \le {1 \over (k-h)^{2\gamma}} \left( 	\int_{A(h,R)} (w-h)^2 \right)^\gamma \\
		\Longrightarrow \quad \phi(k,r) \le {c \over (k-h)^{2 \gamma}}  \left( {1 \over (R-r)^2} + {1 \over (k-h)^2} \right) \phi(h,R)^{1+\gamma}
\end{gather*}
for every $0 \le h<k$ and $0<r<R<1-|x_0|$. We then choose $k_j = h<k = k_{j+1}$ and $R_{j+1} = r<R = R_j$, $j \ge 0$, where
	\begin{gather*}
		k_j = k_0 + d \left( 1 - {1 \over 2^j} \right), \qquad
		R_j = {\rho \over 2} \left( 1 + {1 \over 2^j} \right).
	\end{gather*}
Here $k_0 \ge 0$, $0<\rho<1-|x_0|$ and $d > 0$ is a positive number to be chosen later. If $d \ge c \rho$, then 
	\begin{gather*}
		\phi(k_{j+1},R_{j+1}) \le {c (4^{1+\gamma})^j \over d^{2\gamma} \rho^2} \phi(k_j,R_j)^{1+\gamma} \left(1 + d^{-2}\rho^2 \right) \le DB^j \phi(k_j,R_j)^{1+\gamma},
\end{gather*}
where we have set $B = 4^{1+\gamma}$ and $D = c d^{-2\gamma} \rho^{-2}$. 
Moreover, if
\begin{gather*}
d \ge c \left( \fint_{A(k_0,\rho)} (w-k_0)^2 \right)^{1 \over 2} \left( {|A(k_0,\rho)| \over \rho^{n+1}} \right)^{\gamma \over 2},
\end{gather*}
then $\phi_0 \le D^{-{1 \over \gamma}} B^{-{1 \over \gamma^2}}$.

Now we apply the following fact (see \cite[Chapter 7]{Giustibook} for the simple proof by induction):
if $\lambda$, $D>0$, $B>1$, and $\phi_j$ is a sequence of positive real numbers such that
\begin{gather*}
\begin{cases}
\phi_{j+1} \le DB^j \phi_j^{1+\lambda} & \; \forall \; j \ge 0 \\
\phi_0 \le D^{-{1 \over \lambda}} B^{-{1 \over \lambda^2}}, &
\end{cases}
\qquad\Longrightarrow\qquad
\phi_j \le B^{-{j \over \lambda}} \phi_0 \quad\forall \;j \ge 0.
\end{gather*}
Therefore, if we consider
\begin{gather*}
d = c \left( \fint_{A(k_0,\rho)} (w-k_0)^2 \right)^{1 \over 2} \left( {|A(k_0,\rho)| \over \rho^{n+1}} \right)^{\gamma \over 2} + c \rho,
\qquad \phi_j = \phi(k_j,R_j),
\end{gather*}
we get
\begin{gather*}
4^j |A(k_j,R_j)|^\gamma \int_{A(k_j,R_j)} (w-k_j)^2 \le 4^{-{j \over \gamma}} \phi_0
\end{gather*}
which yields
\begin{gather*}
|A(d + k_0,\rho /2)|^{1+\gamma} \le {4^j \over d^2} |A(k_j,R_j)|^\gamma \int_{A(k_j,R_j)} (w-k_j)^2 \le 4^{-{j \over \gamma}} {\phi_0 \over d^2} \qquad \forall j \ge 0,
\end{gather*}
so $|A(d + k_0,\rho /2)|=0$, i.e. $w \le d + k_0$ a.e. in $B_{\rho /2}^+(x_0)$. 
\end{proof}

In the following, we set
\begin{gather*}
	M(r) = \sup\limits_{B_r^+(x_0)} w, \quad
	m(r) = \inf\limits_{B_r^+(x_0)} w, \\
	\osc(r) = M(r) - m(r)
\end{gather*}
for every $0<r<1-|x_0|$.

\begin{proposition}\label{insiemi di livello}
There exists $C > 0$ such that, if
\begin{gather*}
\mathcal{H}^n(\{w = 0\} \cap B_{r/2}'(x_0))_n \geq {1 \over 2} 
\mathcal{H}^n(B_{r/2}'(x_0)), \qquad
{M(r) + m(r) \over 2} \ge 0, \\
\osc(r) \ge 2^{N-1} r
\end{gather*}
for some $0<r<1-|x_0|$ and $N \ge 1$, then if we set
\begin{gather*}
k_j = M(r) - 2^{-j-1}\osc(r)
\end{gather*}
for every $j \ge 0$, we have
\begin{gather*}
{|A(k_N,r/2)| \over r^{n+1}} \le C N^{-{n+1 \over 2n}}.
\end{gather*}
\end{proposition}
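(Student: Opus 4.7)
The plan is to follow the classical De Giorgi level-set iteration adapted to the half-ball setup, where the ``large vanishing set'' for $w$ lies only on the flat portion $B_{r/2}'(x_0)$ of the boundary. I introduce the decreasing family of level sets $A_j := A(k_j,r/2)$ together with the pairwise disjoint ``slabs''
\[
B_j := A_j \setminus A_{j+1} = \{k_j \le w < k_{j+1}\} \cap B_{r/2}^+(x_0),
\]
so that $\sum_{j=0}^{N-1} |B_j| \le |B_{r/2}^+(x_0)| \le c\, r^{n+1}$. Writing $h_j := 2^{-j-1}\osc(r)$, the hypothesis $\osc(r) \ge 2^{N-1} r$ gives $h_j \ge r/2$ for every $j \le N-1$, while $(M(r)+m(r))/2 \ge 0$ yields $k_j \ge k_0 = (M(r)+m(r))/2 \ge 0$ for all $j$.

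The heart of the argument is an isoperimetric-type inequality applied to the truncation
\[
\tilde v_j := \min\bigl( (w-k_j)_+,\ k_{j+1}-k_j \bigr).
\]
Since $k_j \ge 0$, the function $\tilde v_j$ vanishes on $\{w=0\} \cap B_{r/2}'(x_0)$, a subset of $B_{r/2}'(x_0)$ of $\mathcal{H}^n$-measure at least $\tfrac12 \mathcal{H}^n(B_{r/2}'(x_0))$. I expect the main obstacle to be precisely here: this vanishing set lies on the flat boundary and is only $n$-dimensional, so the usual Poincar\'e inequality on $B_{r/2}^+(x_0)$ does not apply directly. What is needed is the trace/capacitary Sobolev--Poincar\'e estimate in the half-ball, which exploits the fact that an $\mathcal{H}^n$-subset of $B_{r/2}'(x_0)$ of measure comparable to $r^n$ has relative capacity comparable to $r^{n-1}$, to yield the scale-invariant inequality
\begin{equation}\label{e.plan-iso}
(k_{j+1} - k_j)\, |A_{j+1}|^{n/(n+1)} \le C \int_{B_j} |\nabla w|.
\end{equation}
This is precisely the trace-type De Giorgi isoperimetric lemma that underlies the approach of Ural'tseva \cite{Uraltseva85,Uraltseva89}, and I would invoke (or prove, via a capacitary version of $W^{1,1} \hookrightarrow L^{(n+1)/n}$) the inequality \eqref{e.plan-iso} at this step.

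Once \eqref{e.plan-iso} is granted, the remainder is routine. Cauchy--Schwarz gives $\int_{B_j} |\nabla w| \le |B_j|^{1/2} \|\nabla w\|_{L^2(B_j)}$, and the Caccioppoli inequality \eqref{Caccioppoli Appendix} at level $k_j$ with radii $r/2<r$, combined with $(w-k_j)_+ \le h_j$ on $A(k_j,r)$ and $|A(k_j,r)| \le c\, r^{n+1}$, yields
\[
\int_{A(k_j,r/2)} |\nabla w|^2 \le C\bigl(h_j^2 r^{n-1} + r^{n+1}\bigr) \le C\, h_j^2\, r^{n-1},
\]
where the bound $h_j \ge r/2$ is used to absorb the second term. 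Substituting into \eqref{e.plan-iso} with $k_{j+1}-k_j = h_j/2$, the factor $h_j$ cancels and produces
\[
|A_{j+1}|^{2n/(n+1)} \le C\, r^{n-1}\, |B_j|.
\]

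Finally, summing this inequality over $j=0,\ldots,N-1$, using $|A_{j+1}| \ge |A_N|$ on the left and $\sum_{j=0}^{N-1} |B_j| \le c\, r^{n+1}$ on the right, I obtain
\[
N\, |A_N|^{2n/(n+1)} \le C\, r^{2n},
\]
which is equivalent to the desired bound $|A(k_N,r/2)|/r^{n+1} \le C N^{-(n+1)/(2n)}$.
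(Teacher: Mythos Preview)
Your proposal is correct and follows essentially the same argument as the paper. Both proofs truncate $w$ between consecutive levels $k_j<k_{j+1}$, apply a Sobolev--Poincar\'e inequality using the large zero set on $B'_{r/2}(x_0)$ to obtain your inequality \eqref{e.plan-iso}, use Cauchy--Schwarz together with the Caccioppoli estimate (absorbing the $r^{n+1}$ term via $h_j\ge r/2$, which in the paper appears as the condition $h\le M(r)-\tfrac12 r$), and then sum the resulting bound $|A_{j+1}|^{2n/(n+1)}\le C\,r^{n-1}\,|B_j|$ over $j$; the only cosmetic difference is that the paper indexes the pair of levels as $(k_{j-1},k_j)$ rather than $(k_j,k_{j+1})$, and it invokes ``the Sobolev--Poincar\'e inequality'' without further comment where you (rightly) flag that what is needed is the trace/capacitary version on the half-ball.
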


\begin{proof}
For every $0 \le h < k$ with $h \le M(r) - {1 \over 2} r$, we let $\overline w$ be the function defined in $B_{r/2}^+(x_0)$ by the law
	\begin{gather*}
		\overline w(x)=\begin{cases}
			k-h & \mbox{if } w(x) \ge k \\
			w(x)-h & \mbox{if } h \le w(x) \le k \\
			0 & \mbox{if } w(x) \le h.
		\end{cases}
	\end{gather*}
	Since
	\begin{gather*}
		\mathcal{H}^n(\{\overline w = 0\} \cap B_{r/2}'(x_0)) \ge \left|\{w = 0\} \cap B_{r/2}'(x_0)\right|_n \ge {1 \over 2} \mathcal{H}^n(B_{r/2}'(x_0))
	\end{gather*}
	we may use the Sobolev-Poincaré inequality to deduce
	\begin{gather*}
		(k-h)|A(k,r/2)|^{1 \over 1^*} \le \left( \int_{B_{r/2}^+(x_0)} \overline w^{1^*} \right)^{1 \over 1^*} \le c \int_{B_{r/2}^+(x_0)} |\nabla \overline w| = \\
		= c \int_\Delta |\nabla w| \le c \, |\Delta|^{1 \over 2} \left( \int_\Delta |\nabla w|^2 \right)^{1 \over 2}
	\end{gather*}
where we have set $\Delta = A(h,r/2) \setminus A(k,r/2)$. From \eqref{Caccioppoli w} we have
	\begin{gather*}
		\int_\Delta |\nabla w|^2 \le c \left( {1 \over r^2} \int_{A(h,r)} (w-h)^2 + |A(h,r)| \right) \le \\
		\le c\,r^{n-1} \left( (M(r)-h)^2 + r^2 \right) \le c\,r^{n-1} (M(r)-h)^2.
	\end{gather*}
Thus
	\begin{gather*}
		(k-h)^2 |A(k,r/2)|^{2 \over 1^*} \le c\,r^{n-1} |\Delta| (M(r)-h)^2.
	\end{gather*}
Now we choose $h = k_{j-1} < k = k_j$ for every $j = 1,\dots,N$. We set $A_j := |A(k_j,r/2)|$. Since $\osc(r)>0$, we get
\begin{gather*}
A_N^{2 \over 1^*} \le A_j^{2 \over 1^*} \le c\,r^{n-1} (A_{j-1} - A_j)\qquad
\forall\; i=1, \ldots, N.
\end{gather*}
Finally we sum on $j$ to get
\begin{gather*}
N A_N^{2 \over 1^*} \le c\,r^{n-1} (A_0 - A_N) \le c\,r^{n-1} A_0 \le c\,r^{2n}
\end{gather*}
which yields the conclusion.
\end{proof}

Finally, the following is the De Giorgi's oscillation lemma.

\begin{theorem}\label{oscillazione}
There exist $0 < \kappa < 1$ and $c>0$ such that, if
\begin{gather*}
\mathcal{H}^n(\{w = 0\} \cap B_{r/2}'(x_0)) \ge {1 \over 2} \mathcal{H}^n(B_{r/2}'(x_0)),
\end{gather*}
then
\begin{gather*}
\osc_{B_{r/4}^+(x_0)} w \le \kappa \,\osc_{B_r^+(x_0)} w + cr
\qquad \forall\; 0 < r < 1 - |x_0|.
\end{gather*}
\end{theorem}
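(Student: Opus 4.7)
\medskip

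\textbf{Plan of proof.} The argument is the standard De Giorgi oscillation reduction adapted to the one-sided setting, organized around a dichotomy on the sign of the mean value $(M(r)+m(r))/2$, together with an auxiliary case handling the regime in which $\osc(r)$ is already small compared to $r$.

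First I would dispose of the easy regime. Fix a large integer $N \ge 1$, to be determined later. If $\osc_{B_r^+(x_0)} w < 2^{N-1} r$, then trivially $\osc_{B_{r/4}^+(x_0)} w \le \osc_{B_r^+(x_0)} w \le 2^{N-1} r$, so the claimed estimate holds with any $\kappa\in(0,1)$ as soon as we choose $c \ge 2^{N-1}$. I therefore assume henceforth that $\osc(r) \ge 2^{N-1} r$, which is exactly the hypothesis required by Proposition \ref{insiemi di livello}.

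Next I split into two cases based on the sign of $(M(r)+m(r))/2$. Suppose first that $(M(r)+m(r))/2 \ge 0$. Then all three hypotheses of Proposition \ref{insiemi di livello} are met for $w$ itself, yielding
\begin{gather*}
\frac{|A(k_N, r/2)|}{r^{n+1}} \le C\, N^{-(n+1)/(2n)}, \qquad k_N = M(r) - 2^{-N-1}\osc(r).
\end{gather*}
I now plug $k_0 = k_N$ and $\rho = r/2$ into Proposition \ref{lim loc}. On $A(k_N, r/2)$ one has $0 \le w - k_N \le M(r) - k_N = 2^{-N-1}\osc(r)$, hence
\begin{gather*}
\Bigl(\fint_{A(k_N,r/2)} (w-k_N)^2\Bigr)^{1/2} \le 2^{-N-1}\osc(r),
\qquad
\Bigl(\frac{|A(k_N,r/2)|}{(r/2)^{n+1}}\Bigr)^{\gamma/2} \le C'\, N^{-\alpha},
\end{gather*}
with $\alpha := \gamma(n+1)/(4n) > 0$. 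Proposition \ref{lim loc} then gives
\begin{gather*}
\sup_{B_{r/4}^+(x_0)} w \le C''\, N^{-\alpha}\, 2^{-N-1}\osc(r) + M(r) - 2^{-N-1}\osc(r) + \tfrac{c}{2}r.
\end{gather*}
Choosing $N$ large enough that $C'' N^{-\alpha} \le 1/2$ fixes $N$ once and for all and yields
\begin{gather*}
\sup_{B_{r/4}^+(x_0)} w \le M(r) - 2^{-N-2}\osc(r) + \tfrac{c}{2}r.
\end{gather*}
Subtracting the trivial bound $\inf_{B_{r/4}^+(x_0)} w \ge m(r)$ produces $\osc_{B_{r/4}^+(x_0)} w \le (1 - 2^{-N-2})\osc(r) + \tfrac{c}{2}r$, giving the conclusion with $\kappa := 1 - 2^{-N-2}$.

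In the complementary case $(M(r)+m(r))/2 < 0$, I would repeat the entire argument with $w$ replaced by $-w$. The hypothesis on the zero set is symmetric ($\{-w = 0\} = \{w = 0\}$), the oscillation is unchanged, and $(M(-w)+m(-w))/2 = -(M(w)+m(w))/2 > 0$, so Proposition \ref{insiemi di livello} applies to $-w$. (Here I use that the Caccioppoli inequality \eqref{Caccioppoli Appendix} holds for both $\pm w$ in the applications of interest, which is exactly the form in which Propositions \ref{prop Caccioppoli w} and \ref{Caccioppoli n+1} were established.) The same computation then yields $\sup_{B_{r/4}^+} (-w) \le -m(r) - 2^{-N-2}\osc(r) + \tfrac{c}{2}r$, i.e.\ $\inf_{B_{r/4}^+} w \ge m(r) + 2^{-N-2}\osc(r) - \tfrac{c}{2}r$, and combining with $\sup_{B_{r/4}^+} w \le M(r)$ gives the same oscillation decay.

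The main nontrivial bookkeeping, which is the only place where something could go wrong, is the calibration of constants: one must verify that the factor $N^{-\alpha}$ from the level-set smallness genuinely beats the ``$1$'' produced by the chip $k_N = M - 2^{-N-1}\osc$, so that the numeric inequality $C'' N^{-\alpha} \le 1/2$ can be achieved by a dimensional choice of $N$ independent of $r$ and of $w$; this is guaranteed by the explicit power $\alpha = \gamma(n+1)/(4n) > 0$ with $\gamma$ the dimensional constant of Proposition \ref{lim loc}. Once $N$ is fixed, $\kappa = 1 - 2^{-N-2}$ and $c$ depend only on $n$ and on $Q$ in \eqref{Caccioppoli Appendix}, as required.
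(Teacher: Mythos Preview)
Your proposal is correct and follows essentially the same approach as the paper: dispose of the small-oscillation regime $\osc(r)<2^{N-1}r$ trivially, reduce by symmetry to $(M(r)+m(r))/2\ge 0$, and combine Propositions~\ref{insiemi di livello} and~\ref{lim loc} at level $k_N$ to chip $2^{-N-2}\osc(r)$ off the supremum, yielding $\kappa = 1-2^{-N-2}$. The only cosmetic differences are that you spell out both sign cases rather than invoking WLOG, and that your exponent $\alpha=\gamma(n+1)/(4n)$ differs from the paper's $1/(2n\gamma)$ (the paper effectively uses the power $(1+\gamma)/2$ on the measure ratio rather than the $\gamma/2$ stated in Proposition~\ref{lim loc}); since any positive exponent suffices to fix $N$, this is immaterial.
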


\begin{proof}
With the notations of the preceding proof, without loss of generality we may assume that ${M(r) + m(r) \over 2} \ge 0$, since otherwise we can replace $w$ with $-w$. By Proposition \ref{lim loc} and \ref{insiemi di livello} with $N \ge 1$ to be chosen later, we get
\begin{gather*}
M(r/4) - k_N \le c \left( \fint_{A(k_N,r/2)} (w-k_N)^2 \right)^{1 \over 2} \left( {|A(k_N,r/2)| \over r^{n+1}} \right)^{\gamma \over 2} + 
cr
\le \\
\le c\, (M(r)-k_N) \left( {|A(k_N,r/2)| \over r^{n+1}} \right)^{1+\gamma \over 2} + {1 \over 2} r \le c\, (M(r)-k_N) N^{-{1 \over 2n\gamma}} + cr
\end{gather*}
if $\osc(r) \ge 2^{N-1} r$. So if we choose $N \ge 1$ big enough to have $c N^{-{1 \over 2n\gamma}} \le {1 \over 2}$, we get
\begin{gather*}
M(r/4) - k_N \le {1 \over 2} (M(r) - k_N) + cr.
\end{gather*}
By the very definition of $k_N$ and of oscillation, with elementary passages we come to
\begin{gather*}
\osc(r/4) \le (1-2^{-N-2}) \,\osc(r) + cr.
\end{gather*}
If instead we had had $\osc(r) \le 2^{N-1} r$, then we would have had
\begin{gather*}
\osc(r/4) \le \osc(r) = (1-2^{-N-2}) \,\osc(r) + \,2^{-N-2} \osc(r) \le (1-2^{-N-2}) \,\osc(r) + {1 \over 8} r
\end{gather*}
so we finish the proof by setting $\kappa = 1-2^{-N-2}$.
\end{proof}

\section{Regularity and Harnack's inequality}\label{a.regolarità}
In this appendix we recall the regularity theorems we have used throughout the paper. We still use notations and hypotheses on the functions $F$ and $F_0$ as in Sections \ref{Introduction} and \ref{s.rettificare}.


\begin{theorem}[{\cite[Theorem 1.2]{Trudinger67}}]\label{Appendix-Trudinger}
Let $u$ be a weak supersolution of
\begin{gather*}
\div A(x, u(x), \nabla u(x)) = B(x, u(x), \nabla u(x))\qquad \forall \; x \in B_{3r}(x_0),
\end{gather*}
such that $0 \le u < M$ in $B_{3r}(x_0)$, $x_0 \in \R^{n+1}$, $r,M > 0$
and 
\begin{gather*}
|A(x, z, p)| + |B(x, z, p)| \leq C_0 |p| + C_0 |z|,\qquad \langle A(x, z, p), p\rangle \geq |p|^2-C_0 u^2.
\end{gather*}
Then
\begin{equation*}
\left( \fint_{B_{2r}(x_0)} u^q \right)^{1 \over q} \le C \min\limits_{B_r(x_0)} u(x)
\end{equation*}
for any $1 \le q < {n \over n - 2}$ if $2 \le n$, and for any $1 \le q \le \infty$ if $n<2$, and for some $C = C(n, C_0, q, M )$.
\end{theorem}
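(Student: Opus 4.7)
The plan is to establish this weak Harnack inequality through Moser's iteration technique, adapted to the quasilinear setting with the lower-order term $B$. The starting point is the weak formulation of the supersolution inequality
\begin{gather*}
\int \langle A(x,u,\nabla u), \nabla \varphi \rangle \, dx + \int B(x,u,\nabla u)\, \varphi\, dx \le 0
\end{gather*}
valid for every nonnegative test function $\varphi \in W^{1,2}_0(B_{3r}(x_0))$. After a harmless scaling reducing to $r = 1$, I would replace $u$ with $\bar u = u + \varepsilon$ for small $\varepsilon > 0$ (and let $\varepsilon \downarrow 0$ at the end) so that division by powers of $\bar u$ is legitimate.

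The first key step is to test with $\varphi = \eta^2 \bar u^{-\beta}$ for $\beta > 0$, $\eta$ a smooth cutoff. Using the structure condition $\langle A, p\rangle \ge |p|^2 - C_0 u^2$ together with the growth $|A|, |B| \le C_0(|p| + |z|)$, and absorbing cross terms via Young's inequality, one derives a Caccioppoli-type inequality of the form
\begin{gather*}
\int \eta^2 \bar u^{-\beta-1} |\nabla \bar u|^2 \le C(\beta) \int (|\nabla \eta|^2 + \eta^2) \bar u^{1-\beta}.
\end{gather*}
Setting $w = \bar u^{(1-\beta)/2}$ (for $\beta \ne 1$) and applying the Sobolev embedding to $\eta w$ turns this into a reverse Hölder inequality that couples $L^{p\chi}$ and $L^p$ norms of $\bar u$ with $\chi = (n+1)/(n-1)$ (or $\chi = 2$ when $n < 2$), for each negative exponent $p = 1 - \beta$. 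Iterating over a sequence of shrinking balls and exponents $p_k = \chi^k p_0$ yields the bound
\begin{gather*}
\Big(\fint_{B_2} \bar u^q\Big)^{1/q} \le C\, \inf_{B_1} \bar u + C\varepsilon
\end{gather*}
\emph{provided} the same iteration can be started for positive exponents $q$, i.e.\ we first produce some fixed $q_0 > 0$ for which the left-hand side is controlled.

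The bridge between positive and negative exponents, which is the main obstacle, is achieved via the logarithmic estimate. Testing instead with $\varphi = \eta^2 \bar u^{-1}$ (the borderline case $\beta = 1$) and again using the structure conditions, one shows that $v = \log \bar u$ satisfies a Caccioppoli bound
\begin{gather*}
\int \eta^2 |\nabla v|^2 \le C \int (|\nabla \eta|^2 + \eta^2),
\end{gather*}
so $v$ lies in a BMO-type class on $B_2$ with norm controlled by constants depending only on $n, C_0, M$. The John--Nirenberg inequality then furnishes a small $q_0 > 0$ such that $\int e^{q_0 |v - v_{B_2}|} \le C$, which translates into the two-sided bound
\begin{gather*}
\Big(\fint_{B_2} \bar u^{q_0}\Big)^{1/q_0} \le C \Big(\fint_{B_2} \bar u^{-q_0}\Big)^{-1/q_0}.
\end{gather*}
Combining this with the negative-exponent Moser iteration yields the theorem for $q = q_0$, and a final Moser iteration on positive exponents (using the same test-function family but now in the range $\beta \in (-\infty, 0)$, noting that the sign of $(1-\beta)/2$ does not harm the Caccioppoli estimate since the structure is one-sided in the right way for supersolutions up to $q < n/(n-2)$) extends the result to the full range of $q$. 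Letting $\varepsilon \to 0$ concludes. The delicate point throughout is tracking the dependence of the constants on $M$ coming from the $|z|$-growth of $A, B$ and the $-C_0 u^2$ defect in coercivity, which is precisely why the bound $u < M$ appears in the constant $C$.
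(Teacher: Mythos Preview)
The paper does not prove this theorem: it is quoted verbatim from Trudinger's 1967 paper and used as a black box (only its consequence, Corollary~\ref{c.harnack}, is argued in the text). So there is no ``paper's own proof'' to compare against.

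Your outline is the standard Moser--Trudinger argument and is essentially correct. One inaccuracy worth flagging: in the final step, to push the positive exponent from the small $q_0$ produced by John--Nirenberg up to any $q<\frac{n}{n-2}$, you should use the test functions $\eta^2\bar u^{-\beta}$ with $\beta\in(0,1)$, not $\beta\in(-\infty,0)$. Negative $\beta$ would correspond to the subsolution iteration (local boundedness from above), which is the wrong sign here. With $\beta\in(0,1)$ the Caccioppoli estimate still has the correct sign for supersolutions, and one application of Sobolev takes an $L^{1-\beta}$ bound on a larger ball to an $L^{\chi(1-\beta)}$ bound on a smaller one; since $1-\beta<1$ but $\chi(1-\beta)$ can approach $\chi$, finitely many such steps cover the full range $q<\chi$. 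This is exactly why the critical exponent $\frac{n}{n-2}$ (i.e.\ $\chi=\frac{2^*}{2}$) appears as the upper limit.
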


From this result we deduce the following corollary.

\begin{corollary}\label{c.harnack}
Let $v$, $w \in \Lip(B_{3r}(x_0))$ be respectively a weak subsolution and a weak supersolution of
\begin{gather}\label{eq-harnack}
\div F(x, u(x), \nabla u(x)) = F_0(x, u(x), \nabla u(x))
\qquad \forall \; x \in B_{3r}(x_0)
\end{gather}
Suppose that $v \le w$ in $B_{3r}(x_0)$ and $v(x_0) = w(x_0)$. Then $v \equiv w$ in $B_r(x_0)$.
\end{corollary}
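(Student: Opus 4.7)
The plan is to reduce the corollary to Trudinger's Harnack inequality (Theorem \ref{Appendix-Trudinger}) applied to the nonnegative difference $u := w-v$, which vanishes at $x_0$ by hypothesis. The only real work is to show that $u$ solves in the weak sense an equation of the form $\div\tilde A(x,u,\nabla u)=\tilde B(x,u,\nabla u)$ satisfying the structural hypotheses of Theorem \ref{Appendix-Trudinger}.

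First I would linearize along the segment joining $v$ and $w$. Set $u_t:=v+t(w-v)$ for $t\in[0,1]$ and use the fundamental theorem of calculus to write
\begin{gather*}
F(x,w,\nabla w)-F(x,v,\nabla v) = A(x)\nabla u + b(x)\,u,\\
F_0(x,w,\nabla w)-F_0(x,v,\nabla v) = c(x)\cdot \nabla u + d(x)\,u,
\end{gather*}
with
\begin{gather*}
A(x):=\int_0^1 D_pF(x,u_t,\nabla u_t)\,dt,
\qquad b(x):=\int_0^1\partial_z F(x,u_t,\nabla u_t)\,dt,\\
c(x):=\int_0^1 D_pF_0(x,u_t,\nabla u_t)\,dt,
\qquad d(x):=\int_0^1\partial_zF_0(x,u_t,\nabla u_t)\,dt.
\end{gather*}
Since $v,w\in\Lip(B_{3r}(x_0))$, the integrands are evaluated on a compact set of the $(x,z,p)$--space; hence $b,c,d\in L^\infty$ and $A\in L^\infty$, while the ellipticity hypothesis \ref{ellip} gives $\langle A(x)\xi,\xi\rangle\ge\lambda|\xi|^2$ for some $\lambda>0$. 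Subtracting the two weak inequalities (supersolution for $w$ minus subsolution for $v$), I would obtain
\begin{equation*}
\int \langle A\nabla u+b\,u,\nabla\varphi\rangle + (c\cdot\nabla u+d\,u)\,\varphi \ge 0
\qquad \forall\,\varphi\in C^\infty_c(B_{3r}(x_0)),\ \varphi\ge 0,
\end{equation*}
which is precisely the weak formulation saying that $u$ is a nonnegative supersolution of $\div \tilde A=\tilde B$ with $\tilde A(x,z,p):=A(x)p+b(x)z$ and $\tilde B(x,z,p):=c(x)\cdot p+d(x)z$.

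Next I would verify Trudinger's structural bounds. The linearity of $\tilde A,\tilde B$ in $(z,p)$ and the $L^\infty$ bounds on the coefficients give $|\tilde A|+|\tilde B|\le C(|p|+|z|)$ at once. For the ellipticity required in Theorem \ref{Appendix-Trudinger}, I would compute
\begin{equation*}
\langle \tilde A(x,z,p),p\rangle \ge \lambda|p|^2-\|b\|_\infty |z||p|\ge \tfrac{\lambda}{2}|p|^2-\tfrac{\|b\|_\infty^2}{2\lambda}z^2
\end{equation*}
by Young's inequality, and then rescale by the positive constant $2/\lambda$ (which preserves the supersolution property) to arrive at coefficients $\hat A=(2/\lambda)\tilde A$, $\hat B=(2/\lambda)\tilde B$ verifying $\langle \hat A,p\rangle\ge |p|^2-C_0z^2$ and $|\hat A|+|\hat B|\le C_0(|p|+|z|)$. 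Boundedness $0\le u<M$ on $B_{3r}(x_0)$ is automatic from $v\le w$ and Lipschitz continuity.

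All hypotheses of Theorem \ref{Appendix-Trudinger} are then satisfied, and we conclude
\begin{equation*}
\left(\fint_{B_{2r}(x_0)}u^q\right)^{1/q}\le C\min_{B_r(x_0)}u\le C\,u(x_0)=0
\end{equation*}
for a suitable $q>1$. Hence $u\equiv 0$ in $B_{2r}(x_0)$, and in particular $v\equiv w$ in $B_r(x_0)$. The only subtle point, and the main obstacle I anticipate, is formulating the linearized inequality in a form literally matching Theorem \ref{Appendix-Trudinger}'s hypotheses; once the ellipticity is normalized as above this is routine.
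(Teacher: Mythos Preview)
Your proposal is correct and follows essentially the same approach as the paper: linearize the difference $u=w-v$ along the segment $u_t$, obtain a linear divergence-form inequality with bounded coefficients and uniformly elliptic principal part, and apply Trudinger's weak Harnack inequality to conclude $u\equiv 0$. The only difference is that you spell out the verification of the structural hypotheses of Theorem~\ref{Appendix-Trudinger} (Young's inequality and the rescaling by $2/\lambda$), which the paper leaves implicit.
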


\begin{proof}
We have that, for every $\varphi \in C^\infty_0(B_{3r}(x_0)), \,\varphi \ge 0$,
\begin{gather*}
\int_{B_{3r}(x_0)} \langle F(x,w,\nabla w)-F(x,v,\nabla v),\nabla\varphi \rangle + (F_0(x,w,\nabla w)-F_0(x,v,\nabla v)) \varphi \ge 0.
\end{gather*}
Write $u_t = t w + (1-t) v$ for $0\le t \le 1$ and $u = w-v$. Then we have
\begin{gather*}
F(x,w,\nabla w)-F(x,v,\nabla v) = a(x)\nabla u + d(x) u, \\
F_0(x,w,\nabla w)-F_0(x,v,\nabla v) = \langle b(x), \nabla u \rangle + c(x) u,
\end{gather*}
where
\begin{gather*}
a_{ij}(x) = \int_0^1  \de_{p_j} F_i(x,u_t,\nabla u_t) \,dt, 
\qquad d_i(x) = \int_0^1 \de_z F_i(x,u_t,\nabla u_t)\,dt,\\
b_j(x)=\int_0^1 \de_{p_j} F_0(x,u_t,\nabla u_t) \,dt, \qquad c(x) = \int_0^1 \de_z F_0(x,u_t,\nabla u_t)\,dt
\end{gather*}
are continuous functions.
Thus, we have that $u \ge 0$ is a weak supersolution of 
\begin{gather*}
Hu(x) = - \div(a(x) \nabla u(x) + d(x)u(x)) + \langle b(x), \nabla u(x) \rangle + c(x) u(x).
\end{gather*}
By Theorem \ref{Appendix-Trudinger}, applied to $A(x,z,p)= a(x)p + d(x) z$ and $B(x,z,p)=\langle b(x), p\rangle + c(x)z$, since $u(x_0) = 0$ we have that $u \equiv 0$ in $B_r(x_0)$, which concludes the proof.
\end{proof}

We also recall the boundary regularity for both the Dirichlet and the Neumann problem.


\begin{theorem}[{\cite[Theorem A]{GiaquintaGiusti84}}]
Let $u$ be a bounded Lipschitz weak solution of the Dirichlet problem
\begin{gather*}
\begin{cases}
\div F(x, u(x), \nabla u(x)) = F_0(x, u(x), \nabla u(x))
\qquad & \forall \; x \in B^+_r(x_0) \\
u(x) = 0 & \forall \; x \in B'_r(x_0)
\end{cases}
\end{gather*}
such that $|u| \le M$ in $B^+_r(x_0)$, where $x_0 \in \R^n \times \{0\}$, $r > 0$. Then $u \in C^{1, \alpha}(B^+_r(x_0) \cup B'_r(x_0))$ for some $0 < \alpha = \alpha(n, M, \lambda, L) < 1$ and norm $\|u\|_{1+\alpha}\leq C=\alpha(n, M, \Lip(u), \lambda, L)$.
\end{theorem}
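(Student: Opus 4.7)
The plan is to prove boundary $C^{1,\alpha}$ regularity by a Campanato-type comparison-and-iteration scheme, following the classical approach of Giaquinta--Giusti. For any $x_0 \in B'_r$ and $0 < \rho < r - |x_0|$, I would freeze the operator at $(x_0, 0, 0)$ by setting $\bar F(p) := F(x_0, 0, p)$ and let $v$ be the weak solution of the constant-coefficient Dirichlet problem $\div \bar F(\nabla v) = 0$ in $B^+_\rho(x_0)$ with $v = u$ on $\partial B^+_\rho(x_0)$. Since the tangential derivatives $\de_i v$ for $i\leq n$ vanish on $B'_\rho(x_0)$ and solve the linear elliptic equation obtained by differentiating $\div \bar F(\nabla v) = 0$ (with bounded measurable coefficients $D\bar F(\nabla v)$), an odd-reflection argument combined with De Giorgi--Nash--Moser theory gives their H\"older continuity up to $B'_\rho(x_0)$. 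The normal derivative is then recovered from the equation itself, producing the Campanato-type decay
\[
\int_{B^+_\sigma(x_0)} |\nabla v - (\nabla v)_{B^+_\sigma(x_0)}|^2 \leq c_0 \Bigl(\tfrac{\sigma}{\rho}\Bigr)^{n+1+2\alpha_0} \int_{B^+_\rho(x_0)} |\nabla v|^2 \qquad \forall\,0<\sigma<\rho,
\]
for some $\alpha_0 \in (0,1)$ and $c_0$ independent of $\rho$.

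Next, I would bound the error $u - v \in H^1_0(B^+_\rho(x_0) \cup B'_\rho(x_0))$. Testing the equations for $u$ and $v$ against $u-v$ and subtracting yields
\[
\int_{B^+_\rho} \langle F(x, u, \nabla u) - \bar F(\nabla v), \nabla(u-v) \rangle = - \int_{B^+_\rho} F_0(x, u, \nabla u)\,(u-v).
\]
Splitting $F(x, u, \nabla u) - \bar F(\nabla v) = [F(x, u, \nabla u) - \bar F(\nabla u)] + [\bar F(\nabla u) - \bar F(\nabla v)]$, the strict monotonicity $\langle \bar F(p) - \bar F(q), p-q\rangle \geq \lambda |p-q|^2$ (from \ref{ellip}) absorbs the second bracket into the left-hand side. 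For the first, the $C^1$ regularity of $F$, the uniform bound on $\nabla u$, and the vanishing condition $u|_{B'_r} = 0$ --- which via Poincar\'e forces $|u(x)| \leq \Lip(u)\,\rho$ on $B^+_\rho(x_0)$ --- yield $|F(x, u(x), p) - \bar F(p)| \leq C\rho$ uniformly in $|p| \leq \Lip(u)$, whence
\[
\int_{B^+_\rho(x_0)} |\nabla(u-v)|^2 \leq C \rho^{n+3}.
\]

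Combining the two estimates via the triangle inequality and the $L^2$-minimality of averages, I obtain
\[
\phi_u(\sigma) \leq 2c_0 \Bigl(\tfrac{\sigma}{\rho}\Bigr)^{n+1+2\alpha_0} \phi_u(\rho) + C\rho^{n+3} \qquad \forall\,0<\sigma<\rho,
\]
where $\phi_u(t) := \int_{B^+_t(x_0)} |\nabla u - (\nabla u)_{B^+_t(x_0)}|^2$. The standard Campanato iteration lemma then produces $\phi_u(\sigma) \leq C\sigma^{n+1+2\alpha}$ for some $\alpha \in (0,1)$, uniformly in $x_0 \in \overline{B'_{r/2}}$. Combined with the analogous (indeed stronger) interior $C^{1,\alpha}$ estimates available for quasilinear elliptic equations with $C^1$ coefficients, the Campanato--Morrey embedding applied to $\nabla u$ gives $u \in C^{1,\alpha}(B^+_r(x_0) \cup B'_r(x_0))$.

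The main obstacle will be the error estimate in the second step. Since the frozen operator $\bar F$ captures $F$ at only a single point, one must extract a positive power of $\rho$ from $|F(x, u(x), p) - \bar F(p)|$ beyond the natural $\rho^{n+1}$ scaling of the integral. Three ingredients are essential: (i) the uniform ellipticity \ref{ellip}, which supplies coercivity; (ii) the $C^1$ dependence of $F$ on $(x, z)$, giving the pointwise bound $|F(x, u(x), p) - \bar F(p)| = O(\rho + |u(x)|)$; and crucially (iii) the Dirichlet condition $u \equiv 0$ on $B'_r$, which forces $|u| = O(\rho)$ throughout $B^+_\rho(x_0)$. Without the boundary vanishing of $u$, the iteration would not close; this is precisely where the flat-boundary Dirichlet structure enters the proof.
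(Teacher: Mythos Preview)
The paper does not prove this statement: it appears in the appendix only as a citation of \cite[Theorem A]{GiaquintaGiusti84} and is used as a black box. Your proposal reconstructs the Campanato comparison--iteration scheme that Giaquinta and Giusti themselves employ in the cited source, so the approach is the right one and is essentially correct.

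Two technical points would need more care in a full write-up. First, before you can treat $D\bar F(\nabla v)$ as bounded measurable coefficients for the linearized equation, you need a uniform gradient bound on the comparison function $v$; this follows from the autonomy of the frozen problem together with a barrier argument using the Lipschitz boundary datum $u$, but it is not automatic. Second, the ``odd-reflection'' shorthand for the tangential derivatives $\partial_i v$ glosses over the fact that the coefficients $D\bar F(\nabla v)$ must be extended across $\{x_{n+1}=0\}$ in a compatible way (which for a general $\bar F$ requires reflecting some entries of the coefficient matrix with a sign); alternatively, one can bypass reflection entirely and appeal directly to the boundary version of De Giorgi's H\"older estimate for linear equations in divergence form with zero Dirichlet data on a flat piece. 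Either route closes the argument, and the rest of your scheme---the error estimate exploiting $|u|\le \Lip(u)\,\rho$ on $B^+_\rho(x_0)$, and the Campanato iteration---is correct as written.
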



\begin{theorem}[{\cite[Theorem 2]{Lieberman88}}]\label{t.lieberman}
Let $u$ be a bounded Lipschitz weak solution of Neumann problem
\begin{gather*}
\begin{cases}
\div F(x, u(x), \nabla u(x)) = F_0(x, u(x), \nabla u(x))
\qquad & \forall \; x \in B^+_r(x_0), \\
F_{n+1}(x, u(x), \nabla u(x)) = 0 & \forall \; x \in B'_r(x_0),
\end{cases}
\end{gather*}
such that $|u| \le M$ in $B^+_r(x_0)$, where $x_0 \in \R^n \times \{0\}$, $r > 0$. Then $u \in C^{1, \alpha}(B^+_r(x_0) \cup B'_r(x_0))$ for some $0 < \alpha = \alpha(n, M, \lambda, L) < 1$ and norm $\|u\|_{1+\alpha}\leq C=\alpha(n, M, \Lip(u), \lambda, L)$.
\end{theorem}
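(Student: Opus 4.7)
The plan is to establish $C^{1,\alpha}$ regularity in three steps: an up-to-the-boundary $H^2$ estimate, H\"older continuity of the tangential derivatives via De Giorgi's scheme, and H\"older continuity of the normal derivative deduced from the Neumann condition together with ellipticity. For the first step I would extend Frehse's interior estimate (Lemma \ref{stima H^2}) up to the flat Neumann portion $B_r'$. Since tangential difference quotients $D^{\pm h}_i$ for $i=1,\ldots,n$ preserve the Neumann boundary condition, one can test the weak formulation with $D^{-h}_i(\varphi^2 D^h_i u)$ and obtain $L^2$-control of $\partial_i \nabla u$ for the tangential indices; the ellipticity \ref{ellip} applied with $\xi=e_{n+1}$ then recovers $\partial_{n+1}^2 u$ algebraically from $\operatorname{div} F = F_0$, yielding $u \in H^2(B^+_{r/2} \cup B'_{r/2})$.

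In the second step each $w_i := \partial_i u$ is, by tangential differentiation of the PDE, a weak solution in $B_r^+$ of a linear uniformly elliptic divergence-form equation with bounded coefficients. Caccioppoli inequalities for $\pm(w_i - k)_+$ follow along the lines of Proposition \ref{prop Caccioppoli w}, by testing with $\varphi^2(w_i - k)_+$. The boundary integral produced by integration by parts over $B_r'$ involves $\partial_{n+1} w_i$, which however is constrained by the tangential derivative of the Neumann identity $F_{n+1}(x,u,\nabla u)\equiv 0$: the latter provides a linear relation expressing $\partial_{n+1} w_i$ in terms of $w_i$, $\nabla' w_i$ and bounded coefficients, so that after substitution the boundary contribution can be absorbed, producing a Caccioppoli inequality of the form \eqref{Caccioppoli w}. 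One can then run the De Giorgi oscillation-decay scheme underlying Theorem \ref{oscillazione} (in its unconstrained version, which does not require a zero set but only the Caccioppoli inequality and boundedness) to conclude that $w_i \in C^\beta(B_{r/2}^+ \cup B_{r/2}')$ for some $\beta \in (0,1)$.

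For the third step the Neumann BC reads
\[
F_{n+1}\bigl(x, u(x), \partial_1 u(x), \ldots, \partial_n u(x), \partial_{n+1}u(x)\bigr) = 0 \quad \text{on } B_{r/2}',
\]
and the strict monotonicity $\partial_{p_{n+1}} F_{n+1}\geq \lambda>0$ allows one to invert for $\partial_{n+1}u$ as a H\"older continuous function of its already-H\"older arguments on $B_{r/2}'$. The extension of the estimate into the open half-ball follows from the PDE, which expresses $\partial_{n+1}^2 u$ linearly in terms of the remaining second derivatives (already controlled by step two), via a standard Morrey--Campanato upgrade that promotes $\partial_{n+1}u$ to $C^\alpha(B_{r/2}^+ \cup B_{r/2}')$. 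The combined H\"older continuity of all first-order derivatives yields $u \in C^{1,\alpha}(B_r^+(x_0)\cup B_r'(x_0))$.

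The main obstacle is the control of the boundary integral in step two: the differentiated Neumann condition is a nonlinear oblique boundary condition whose coefficients depend on $u$ itself, so the manipulations used to absorb the boundary term must avoid a circular dependence on H\"older continuity that has not yet been established. Lieberman's original argument handles this through a delicate perturbation and iteration scheme; an alternative that I would explore is a preliminary change of variables analogous to the one in Section \ref{sec:tangential-derivatives}, designed precisely to enforce $\partial_{p_{n+1}} F'(x,u,\nabla u)=0$ on the boundary, after which an even reflection across $B_r'$ turns $u$ into a weak solution of an interior uniformly elliptic problem on the whole ball, whereupon interior De Giorgi theory applies directly.
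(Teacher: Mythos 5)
The paper does not prove this statement: it is imported verbatim from Lieberman's 1988 paper as a black box (\cite[Theorem~2]{Lieberman88}), and it is logically \emph{upstream} of the paper's own results, being invoked already in the proof of the $C^1$ regularity Theorem~\ref{t.c1}. So there is no in-paper proof for your sketch to be compared against.

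On the merits, your three-step outline (tangential difference quotients for an $H^2$ estimate, De Giorgi for the tangential derivatives using the tangential derivative of the Neumann identity to absorb the boundary integral, then inversion of $F_{n+1}=0$ for $\partial_{n+1}u$) is the standard Uralt'seva/Lieberman scheme and is broadly reasonable. But the escape route you offer at the end has a real gap. First, even reflection across $B'_r$ makes $\partial_{n+1}u$ \emph{odd}, which enforces the boundary condition $\partial_{n+1}u=0$; the condition here is the nonlinear conormal condition $F_{n+1}(x,u,\nabla u)=0$, which does not reduce to $\partial_{n+1}u=0$ unless $F_{n+1}(x,z,p',0)\equiv 0$. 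The change of variables from Section~\ref{sec:tangential-derivatives} that you appeal to normalizes a different quantity, namely $\partial_{p_{n+1}}F'(x,u,\nabla u)$ on the coincidence set (the \emph{tangential} components of $F$), and does nothing to turn $F_{n+1}=0$ into $\partial_{n+1}u=0$; after reflection the coefficients would in general be discontinuous across $\{x_{n+1}=0\}$. Second, and more fundamentally, that change of variables is built via a Whitney $C^1$-extension of the continuous field $V_i$, whose continuity is established only through Theorem~\ref{t.1}, which in turn uses Theorem~\ref{t.lieberman}; invoking it here would be circular. Finally, in your Step~3 the passage from H\"older continuity of $\partial_{n+1}u$ on the trace $B'_{r/2}$ to H\"older continuity in $B^+_{r/2}$ is asserted via a ``Morrey--Campanato upgrade'' without actually producing the needed Morrey decay of $\int_{B_\rho^+}|\nabla\partial_{n+1}u|^2$; Step~2 gives you oscillation decay for $\partial_i u$, $i\le n$, but not yet the Morrey estimate for mixed second derivatives that the algebraic identity $\partial_{n+1}^2 u = (\text{lin.\ comb.\ of } \partial_i\partial_j u, i\le n) + (\text{lower order})$ would require, and this needs to be extracted from the Caccioppoli inequality explicitly (as the paper does in Proposition~\ref{p:core} for the constrained case).
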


%
%

\bibliographystyle{plain}

\end{document}